\newtheorem{theorem}{Theorem}[section]
\newtheorem{maintheorem}{Theorem}
\newtheorem{lemma}[theorem]{Lemma}
\newtheorem{remark}[theorem]{Remark}
\newtheorem{corollary}[theorem]{Corollary}
\newtheorem{definition}[theorem]{Definition}
\newtheorem{example}[theorem]{Example}
\newtheorem{fact}[theorem]{Fact}
\newcommand{\set}[1]{\{#1 \}}
\DeclareMathOperator{\tp}{tp}
\DeclareMathOperator{\qftp}{qftp}
\title{Model theory of homogeneous D-sets}
\author{Felipe Estrada and John Goodrick}
\thanks{The second author was supported by the grant INV-2023-162-2840 from the Faculty of Sciences of the Universidad de los Andes while carrying out this research}
\begin{document}

\begin{abstract}
    We explore several model-theoretic aspects of $D$-sets, which were studied in detail by Adeleke and Neumann. We characterize ultrahomogeneity in the class of colored $D$-sets and classify unbounded order-indiscernible sequences in such structures. We use these results to provide a characterization of distal colored $D$-sets and prove that all colored $D$-sets with quantifier elimination are dp-minimal.
\end{abstract}

\maketitle

\section{Introduction}

A $D$-set is a relational structure which consists of a set and a quaternary relation $D$, axiomatized so that the set may be interpreted as the leaves of a tree and the relation $D(x,y,z,w)$ (henceforth written as $D(xy;zw)$ or as the shorthand $xy|zw$) may be interpreted as the the statement that the unique path from $x$ to $y$ is disjoint from the unique path from $z$ to $w$. These structures have been extensively developed by Adeleke and Neumann \cite{adeleke1998relations}, along with related concepts such as $C$-sets and $B$-sets. These tree-like structures arise naturally in the study of infinite primitive Jordan permutation groups (see \cite{adeleke1996classification}, \cite{adeleke1996primitive}). In this paper we will study the model-theoretic properties of general $D$-sets, which are known to not be stable. In addition to pure $D$-sets, we shall also study $D$-sets augmented with a partition into unary predicates, which we will refer to as \emph{colored} $D$-sets, with a particular eye towards characterizing ultrahomogeneity, distality and dp-minimality in colored $D$-sets. Braunfeld, Macpherson and Almazaydeh have also studied the model theory of $D$-sets, with a focus on homogeneity and orbital growth rate \cite{almazaydeh2024omega}.

The first question we will consider is the classification of ultrahomogeneous colored $D$-sets. Recall:

\begin{definition}
    A first-order structure $\mathcal{M}$ is called \emph{ultrahomogeneous} if every partial isomorphism between finitely-generated substructures of $\mathcal{M}$ can be extended to an automorphism of $\mathcal{C}$.
\end{definition}

Our first main result is to give a classification of ultrahomogeneous colored $D$-sets.

\begin{maintheorem}
    \label{ultrahomogeneity theorem}
        A countable proper colored $D$-set $(\Omega; P_1, \ldots, P_n)$ is ultrahomogeneous if, and only if, $\Omega$ is dense and regular and for each infinite sector $\Sigma$ of every splitting and for every color $i$, $\Sigma \cap P_i \neq \emptyset$.
\end{maintheorem}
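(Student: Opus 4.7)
The plan is to prove the two directions of the biconditional separately, using a back-and-forth argument for sufficiency. For the necessity direction, assume $(\Omega; P_1,\ldots,P_n)$ is ultrahomogeneous and, for each of the three conditions, establish the contrapositive by constructing a finite partial isomorphism that cannot be extended to an automorphism. If $\Omega$ fails to be dense, one finds a pair of elements admitting no interior branch point and exhibits two configurations that agree as partial substructures but diverge in their branching profile once an additional element is introduced. If $\Omega$ fails to be regular, two splittings of distinct degree give partial isomorphisms between single-branch-point configurations that cannot be globally extended. If some infinite sector $\Sigma$ of some splitting avoids a color $P_i$, pick a point of color $P_i$ from another sector: any automorphism respecting the ambient splitting would be forced to send that $P_i$-point into $\Sigma$, an impossibility.

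For sufficiency, the core is a standard back-and-forth. Let $f\colon A \to B$ be a finite partial isomorphism between finitely generated substructures of $\Omega$ and fix $c \in \Omega \setminus A$; we seek $d \in \Omega$ such that $f \cup \{(c,d)\}$ is again a partial isomorphism. By the Adeleke--Neumann structure theory, the quantifier-free type of $c$ over $A$ is governed by two pieces of data: the color $P_i$ containing $c$ and the \emph{sector} occupied by $c$ in the canonical subtree $T_A$ that $A$ generates inside $\Omega$. Regularity ensures that $T_A$ and $T_B$ share the same branching profile, so the sector of $T_A$ containing $c$ corresponds canonically to a sector $\Sigma'$ of $T_B$. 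It then suffices to locate $d \in \Sigma'$ of color $P_i$: density guarantees that $\Sigma'$ is infinite (any sector of a finite subtree in a dense $D$-set is unbounded, so it contains an infinite sector of some splitting of $\Omega$), after which the color hypothesis supplies the required $d$.

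The principal technical difficulty is to make rigorous, purely in the language of $D$-sets, the \emph{subtree generated by} $A$ together with its branch points and sectors, and to verify that these sectors are quantifier-free definable over $A$ in a manner respected by partial isomorphisms. In particular, one must check that sectors of $T_A$ arise as finite intersections of infinite sectors of splittings of $\Omega$, so that the color hypothesis (which is stated splitting-by-splitting) does transfer to sectors of the generated subtree. Once this dictionary between finite $D$-subsets and their induced tree combinatorics is in place, the density and regularity hypotheses handle the tree geometry and the color hypothesis closes each step of the back-and-forth.
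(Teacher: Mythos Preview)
Your overall architecture matches the paper: back-and-forth for sufficiency, and for necessity three separate constructions of non-extendible partial isomorphisms corresponding to failure of density, regularity, and the color condition. However, your sketch of the forth step misplaces the roles of the hypotheses in a way that would leave a gap if carried out as written.

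You write that ``regularity ensures that $T_A$ and $T_B$ share the same branching profile.'' This is not where regularity enters: since $\phi\colon A\to B$ is already a partial isomorphism, $A$ and $B$ are isomorphic as $D$-sets and their generated subtrees automatically agree. The paper's device is not the subtree $T_A$ but the \emph{splitting of $A$ induced by $c$} (Lemma~\ref{induced splitting}), which by Remark~\ref{spltting is determined by qftp} completely determines $\qftp(c/A)$. Pushing this splitting forward along $\phi$ gives a splitting $\mathcal{D}$ of $B$, and the task is to find $d$ inducing $\mathcal{D}$ on $B$ with the correct color. The argument then bifurcates on whether the induced splitting is a node splitting or an edge splitting, and the two hypotheses are used in different cases. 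In the node case, one extends $\mathcal{C}\cup\{\{c\}\}$ and $\mathcal{D}$ to splittings of $\Omega$ via Lemma~\ref{splitting extension lemma}; regularity is what guarantees the extended splitting on the $B$-side has strictly more sectors than $|\mathcal{D}|$, so that a ``fresh'' sector disjoint from $B$ exists in which to place $d$. In the edge case, regularity plays no role; instead density is invoked to manufacture a witness separating complementary pairs from the two sectors, and only then does one pass to a node splitting of $\Omega$ to invoke the color hypothesis.

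Your parenthetical ``density guarantees that $\Sigma'$ is infinite'' conflates these two mechanisms. Infiniteness of sectors of node splittings in a proper $D$-set comes from Fact~\ref{basic facts}(1) and does not require density; density is needed precisely to handle the edge-splitting case, where without it there may be no element of $\Omega$ inducing the required two-sector partition on $B$. If you rework the forth step around the induced-splitting dichotomy rather than an unspecified ``sector of $T_A$ containing $c$,'' the argument goes through along the lines of the paper.
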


All of the terms in the statement of Theorem~\ref{ultrahomogeneity theorem} will be defined precisely in Section 2, but to give the reader an idea: imagine, for the sake of intuition, that our $D$-set arises as the set of leaves of a tree, with $D(xy;zw)$ asserting the disjointness of paths. Then each internal node of this tree has ``branches,'' and the set of leaves on any such branch is a \emph{sector}. To be \emph{proper} means that no sector consists of a single leaf, to be \emph{dense} means that there is always an internal node between any two non-intersecting paths, and to be \emph{regular} means that every internal node has the same number of branches.

Our other main results concern indiscernible sequences and dp-minimality of colored $D$-sets. In NIP (or dependent) theories, Shelah isolated a cardinal-valued rank for partial types, analogous to weight in stable theories, known as \emph{dp-rank} (\cite{shelahstrong}). Theories in which all $1$-types have dp-rank $1$ are \emph{dp-minimal}; this class includes all strongly minimal and all o-minimal theories, all theories of colored linear orders, and $p$-adically closed fields. For more on dp-minimality and its importance, see \cite{simon2015guide}.

We add another class of treelike structures to the growing list of examples of dp-minimal theories:

\begin{maintheorem}
    \label{dpmin}
        Let $T$ be the theory of a $D$-set augmented with unary predicates such that $T$ has quantifier elimination. Then $T$ is dp-minimal.

        In particular, every countable proper homogeneous colored $D$-set is dp-minimal.
\end{maintheorem}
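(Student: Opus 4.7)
The plan is to verify the standard indiscernibility-based criterion for dp-minimality: for any pair of mutually indiscernible sequences $I_1 = (a_i)_{i \in \mathbb{Z}}$ and $I_2 = (b_j)_{j \in \mathbb{Z}}$ and any singleton $c$, at least one of $I_1, I_2$ remains indiscernible over $c$. Because $T$ has quantifier elimination, every type is determined by atomic formulas, which are Boolean combinations of instances of $D(\,\cdot\,;\,\cdot\,)$ and of the unary colors $P_i$. The $P_i$ contribute nothing to this criterion, since every element of an indiscernible sequence has the same color and the addition of the singleton $c$ cannot disturb this; hence the whole question reduces to controlling the $D$-atomic type of $c$ together with tuples from the two sequences.

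Next I would appeal to the classification of unbounded order-indiscernible sequences established earlier in the paper. For each possible ``shape'' of such a sequence, I would analyze the $D$-type between $c$ and a tuple $(a_{i_1}, \ldots, a_{i_k})$ drawn from a tail of the sequence. The intuition is that $c$ interacts with the sequence through a single point of the underlying tree, so as the indices $i_1, \ldots, i_k$ move far out into the sequence, the atomic relations $D(c\, a_{i_r}; a_{i_s} a_{i_t})$ are forced by the $D$-set axioms to depend only on the ordering of the $i_r$'s, which gives indiscernibility of a tail of $I_1$ over $c$.

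The main technical obstacle is the case in which $c$ is placed essentially where the indiscernible sequence is ``converging'', so that $c$ might genuinely split $I_1$ into two pieces carrying incompatible quantifier-free types on each side. The goal then is to show that in this situation the mutual indiscernibility of $I_1$ and $I_2$, together with the tree-like axioms for $D$, forces $I_2$ to lie entirely within a single sector of the splitting determined by $c$ and the convergence data of $I_1$, so that $I_2$ remains indiscernible over $c$; attempting to disrupt both sequences simultaneously would require $c$ to realize two incompatible ``directions'' at once, contradicting an axiom of $D$-sets. Finally, the ``in particular'' clause follows because a countable proper homogeneous colored $D$-set satisfies quantifier elimination (a standard consequence of ultrahomogeneity in a finite relational language), so Theorem~\ref{ultrahomogeneity theorem} places such structures within the scope of the main statement.
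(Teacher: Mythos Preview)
Your overall strategy matches the paper's: reduce to atomic $D$-relations via quantifier elimination, invoke the classification of indiscernible sequences, and argue that a single element $c$ cannot simultaneously disrupt two mutually indiscernible sequences. However, as written the proposal has real gaps rather than merely a different packaging.

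First, dp-minimality requires that one of the sequences $I_1,I_2$ be indiscernible over $c$ in its entirety, not just on a tail. Your first paragraph only argues that a tail of $I_1$ is indiscernible over $c$, and then in the ``obstacle'' paragraph you switch to arguing about $I_2$; but you never give a criterion that decides cleanly which of the two cases you are in. The paper handles this by introducing the \emph{discernible hull} $H(s)$ of an indiscernible sequence and proving the exact equivalence: $s$ is indiscernible over $B$ if and only if $B\cap H(s)=\emptyset$ (Theorem~\ref{Indiscernibility over a set full characterization}). Your informal talk of ``where the sequence is converging'' and ``a single sector determined by $c$ and the convergence data'' is gesturing at $H(s)$, but without a precise definition you cannot carry out the dichotomy; in particular, for petaled sequences there is no convergence at all, and the relevant obstruction is that $c$ lands in one of the sectors of a fixed node splitting.

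Second, the sequences $I_1,I_2$ in the dp-minimality criterion are sequences of tuples, not of singletons; your notation $I_1=(a_i)_{i\in\mathbb{Z}}$ and the analysis of $D(c\,a_{i_r};a_{i_s}a_{i_t})$ only treat the singleton case. The paper extends the hull to tuple-sequences component-wise and proves the needed lemma (Lemma~\ref{Hulls are disjoint or identical}) that component hulls are either equal or disjoint, which is what makes the reduction work.

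Third, the crux --- that mutual indiscernibility forces $H(I_1)\cap H(I_2)=\emptyset$ --- is asserted in your last paragraph (``attempting to disrupt both sequences simultaneously would require $c$ to realize two incompatible `directions' at once'') but not proved; this is the content of Corollary~\ref{disjoint_hulls} and the lemma preceding it, and it requires a genuine case analysis using the One Sector Lemma, not a single appeal to a $D$-set axiom. Once these three ingredients are in place, the paper's proof of Theorem~\ref{dpmin} is two lines: hulls are disjoint, so $c$ misses at least one of them, hence that sequence stays indiscernible over $c$. Your final remark on the ``in particular'' clause is correct.
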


Following the present introduction, in Section 2 we will recall the formal axiomatization of $D$-sets and we will establish a couple of their basic properties. In Section 3 we will extensively develop the correspondence between $D$-sets and (graph-theoretic) trees, which provides an intuitive grounding and helpful tools for the rest of the study. In Section 4, we will use the previously-developed tools to fully characterize ultrahomogeneous $D$-sets. We shall also introduce a weaker notion of homogeneity called $P$-homogeneity and we shall characterize $D$-sets with this property.

To finish, in Section 5 we will characterize indiscernible sequences which are not bounded above or below in colored $D$-sets and use this partial characterization to give sufficient and necessary conditions for a colored $D$-set with quantifier elimination to be distal, and further prove that such colored $D$-sets are always dp-minimal.

\section{Preliminaries of D-sets and Splittings}

We will consider $D$-sets in the sense of Adeleke and Neumann (see \cite{adeleke1998relations}). Here, $D(w,x; y,z)$ is a $4$-ary relation with the following definition taken from their book.

\begin{definition}\label{Definition of D-set}
    A \emph{$D$-set} is a set $\Omega$ endowed with a $4$-ary relation $D(wx;yz)$ and satisfying the following properties for all $w, x, y,$ and $z$ in $\Omega$:

    \begin{align*}
        \textup{(D1)} & \hspace{0.2in} D(wx;yz) \rightarrow \left(D(xw; yz) \wedge D(yz; wx) \right) \\
        \textup{(D2)} & \hspace{0.2in} D(wx;yz) \rightarrow \neg D(wy;xz)\\
        \textup{(D3)} & \hspace{0.2in}  D(wx;yz) \rightarrow \forall v\left(D(vx;yz) \vee D(wx;yv) \right)\\
        \textup{(D4)} & \hspace{0.2in} \left(w \neq y \wedge x \neq y \right) \rightarrow D(wx;yy).
    \end{align*}

    It is a \emph{proper} $D$-set if it contains at least three elements and in addition

    \begin{align*}
        \textup{(D5)} & \hspace{0.2in} \left(|\{w, x, y\}| = 3\right) \rightarrow \exists z \left(z \neq y \wedge D(wx;yz)\right).
    \end{align*}

    A $D$-set is \emph{dense} if it has at least two elements and

    \begin{align*}
        \textup{(D6)} & \hspace{0.2in} D(wx;yz) \rightarrow \exists v \left(D(vx;yz) \wedge D(wv;yz) \wedge D(wx;vz) \wedge D(wx;yv)\right).
    \end{align*}
\end{definition}

\begin{example}
\label{flowers}
For each nonzero cardinal $\kappa$, an \emph{improper} $D$-set $F_\kappa$ (a ``flower'' of size $\kappa$) can be defined as follows: as a set, $F_\kappa$ consists of $\kappa$ elements, and for $w, x, y, z \in F_\kappa$, the relations $D(wx; yz)$ holds if and only if either $w = x$ and $w \notin \{y,z\}$, or $y = z$ and $y \notin \{w,x\}$. This corresponds to the $D$-relation on the leaves of a tree with a single internal note and $\kappa$ leaves. It is routine to check the axioms (D1)-(D4) all hold of $F_\kappa$. Note also that any flower $F_\kappa$ is ultrahomogeneous, since its $D$-relation is definable in the language with only equality, so that $\textup{Aut}(F_\kappa)$ is simply the set of all permutations of the underlying set.
\end{example}

The following result from Adeleke and Neumann (\cite{adeleke1998relations}, \S 22, (D9)) is occasionally helpful:

\begin{fact}[Transitivity Lemma]
    If $D(ab; xy)$ and $D(ab;yz)$ then $D(ab, xz)$.
\end{fact}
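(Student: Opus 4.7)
The Transitivity Lemma only involves axioms (D1)--(D3), so my plan is to derive it by direct case analysis using these three. The key tool is (D3), which allows one to introduce a new element into a known $D$-relation at the cost of a disjunction, and the contradiction in the hard case will come from (D2).

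First, I would apply (D3) to the hypothesis $D(ab;xy)$ with the fresh variable $v = z$, obtaining the dichotomy $D(zb;xy)$ or $D(ab;xz)$. If the right disjunct holds, we are done. Otherwise we have $D(zb;xy)$. I would then rewrite the second hypothesis $D(ab;yz)$ as $D(ab;zy)$ via (D1) and apply (D3) again with $v = x$, yielding $D(xb;zy)$ or $D(ab;zx)$. Once more, the right disjunct is equivalent to $D(ab;xz)$ by (D1), so the only case left to rule out is that both $D(zb;xy)$ and $D(xb;yz)$ hold simultaneously.

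To close this case, note that (D2) applied to $D(xb;yz)$ directly gives $\neg D(xy;bz)$, while $D(zb;xy)$ is equivalent to $D(xy;bz)$ after two applications of (D1) (swap the two pairs, then swap within the second pair). This contradiction forces $D(ab;xz)$, as desired. The only real challenge is bookkeeping: axiom (D3) is not symmetric in its four slots, and one must apply (D1) carefully to line up the instances so that (D2) produces the needed contradiction; there is no deeper combinatorial content beyond that.
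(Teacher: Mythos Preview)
Your proof is correct: the two applications of (D3) and the final contradiction via (D1)/(D2) all check out, and the bookkeeping you flag is handled properly. Note that the paper itself does not prove this statement at all --- it records it as a Fact and cites Adeleke and Neumann (\S22, (D9)) --- so there is no in-paper argument to compare against; your direct axiomatic derivation is a complete and self-contained justification.
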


In order to study $D$-sets, we introduce a key concept which generalizes the concept of structural partitions from the book of Adeleke and Neumann.

\begin{definition}\label{splitting}
    If $\Omega$ is any D-set, a \emph{splitting of $\Omega$} is a partition $ \set{\Sigma_i: i \in \alpha}$ of $\Omega$ where $\alpha$ is some (possibly infinite) ordinal, $\alpha \geq 2$, satisfying the following conditions:
    \begin{enumerate}
    \item If $a, b$ are any two elements of some $\Sigma_i$ and $c,d$ are any two  elements of $\Omega \setminus \Sigma_i$, then $D(ab; cd)$; and
    \item If $a, b, c,$ and $d$ each belong to different sets of the partition, then $\neg D(ab; cd)$.
    \end{enumerate}

    A \emph{sector} is an element of a splitting.
\end{definition}

For intuition, if $(\Omega, D)$ is the $D$-set of leaves of a tree, then each internal node $x$ gives rise to splitting: for each edge connected to $a$, the set of all leaves which connect to $a$ through that edge form a sector, and every edge gives rise to a splitting of size 2 composed of the sets of leaves that connect to the edge through each of its vertices.

Occasionally, it becomes useful to distinguish splittings into two sectors from splittings into three or more (which are the same as Adeleke and Neumann's structural partitions). Hence the following definition:

\begin{definition}\label{Types of splitting}\phantom{ }
    \begin{enumerate}
        \item A \emph{node splitting} is a splitting into more than 2 sectors.
        \item An \emph{edge splitting} is a splitting into two sectors.
        \item A \emph{true edge splitting} is an edge splitting which cannot be refined to a node splitting, and such that no sector is a singleton 
    \end{enumerate}  
\end{definition}

The motivation for this nomenclature comes from the natural interpretation of $D$-relations as relations on the leaves of a tree. A node splitting is a partition of the leaves induced by the connected components one obtains after removing an inner node from the tree, and an edge splitting is the same but after removing an edge instead

The following notion of regularity for $D$-sets will be needed for our characterization of ultrahomogeneous colored $D$-sets in Section 4.

\begin{definition}
        A $D$-set is \emph{regular} if all node splittings have the same cardinality. A $D$-set is $d$-regular if all node sectors have size $d$.
\end{definition}

Here are some useful basic facts about sectors from Adeleke and Neumann, adapted to our terminology:

\begin{fact}\phantom{ }\label{basic facts}
    \begin{enumerate}
        \item (Lemma 24.1) If $\Omega$ is a proper $D$-set, then every sector of a node splitting is infinite.
        \item (Theorem 24.2) If $x, y,$ and $z$ are distinct elements of a $D$-set, then there is a node splitting in which they are in different sectors.
        \item (Corollary 25.2 (1)) No two distinct node splittings may share a sector.
        \item (Corollary 25.2 (2)) If $\Sigma_1$ and $\Sigma_2$ are any two sectors of a $D$-set $\Omega$, possibly from different splittings, then at least one of the following holds: (i) $\Sigma_1 \subseteq \Sigma_2$, (ii) $\Sigma_2 \subseteq \Sigma_1$, (iii) $\Sigma_1 \cap \Sigma_2 = \emptyset$, or (iv) $\Sigma_1 \cup \Sigma_2 = \Omega$.
    \end{enumerate}
\end{fact}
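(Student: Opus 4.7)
The plan is to prove the four sub-facts in sequence, using axioms (D1)--(D5) and the defining conditions of a splitting. The tree-theoretic intuition motivates each step, but everything must be checked against the axioms.

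For part (1), fix a node splitting $\set{\Sigma_i : i \in \alpha}$ with $\alpha \geq 3$ and fix $a_1 \in \Sigma_1$. Choose $b \in \Sigma_2$, $c \in \Sigma_3$, and apply (D5) to the triple $b, c, a_1$ to obtain $z \neq a_1$ with $D(bc; a_1 z)$. The key claim is that $z \in \Sigma_1$, proven by ruling out the alternatives: if $z$ lay in a fourth sector, then $\{a_1, b, c, z\}$ would occupy four different sectors, violating splitting condition (2) (after using (D1) to rewrite $D(bc; a_1 z)$ as $D(a_1 z; bc)$); if $z \in \Sigma_3$, combining $D(bc; a_1 z)$ with the splitting relation $D(cz; a_1 b)$ forced by condition (1), via (D1) and (D2), yields a contradiction; and if $z \in \Sigma_2$, combining $D(bc; a_1 z)$ with $D(bz; a_1 c)$ via (D1) and (D2) likewise produces a contradiction. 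Iterating with an appropriate sequence of triples --- for instance, applying (D5) to $a_n, c, a_1$ at each stage --- produces a new element of $\Sigma_1$ distinct from all previously constructed ones, giving infinitely many elements of $\Sigma_1$. By relabeling, every sector is infinite.

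For part (2), given distinct $x, y, z$, I would construct a candidate partition by grouping each $u \in \Omega$ according to which of $x, y, z$ it lies ``on the same side of'' under the $D$-relation: set $\Sigma_x := \{u : D(yz; xu)\} \cup \{x\}$, and similarly for $\Sigma_y, \Sigma_z$, with any leftover elements forming additional sectors of what will be a node splitting. Axioms (D1)--(D4) together with part (1) ensure the construction is well-defined and satisfies both clauses of Definition \ref{splitting}. Parts (3) and (4) are then structural statements about sectors. For (4), assume for contradiction that none of the conditions (i)--(iv) hold and choose witnesses $a \in \Sigma_1 \setminus \Sigma_2$, $b \in \Sigma_2 \setminus \Sigma_1$, $c \in \Sigma_1 \cap \Sigma_2$, and $d \in \Omega \setminus (\Sigma_1 \cup \Sigma_2)$; splitting condition (1) applied to $\Sigma_1$ gives $D(ac; bd)$, and applied to $\Sigma_2$ gives $D(bc; ad)$, but rewriting the first as $D(ca; bd)$ via (D1) and applying (D2) yields $\neg D(cb; ad) = \neg D(bc; ad)$, a contradiction. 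For (3), if two distinct node splittings $\Pi_1, \Pi_2$ shared a sector $\Sigma$, the induced partitions of $\Omega \setminus \Sigma$ would disagree on some pair of elements, and combining the splitting relations from $\Pi_1$ with those from $\Pi_2$ as in part (4) produces inconsistent $D$-statements.

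The main obstacle I anticipate is the case analysis in part (1): ruling out $z \in \Sigma_2$ requires the observation that $D(bc; a_1 z)$ and $D(bz; a_1 c)$ are contradictory via (D1)--(D2) after the right rewriting, which is not entirely obvious at first sight, and ensuring that the iteration produces genuinely new elements (rather than cycling back to previously-constructed ones) requires choosing triples with some care. A secondary subtlety lies in part (2), where one must verify that the candidate partition satisfies both splitting conditions, not merely that it separates $x, y, z$.
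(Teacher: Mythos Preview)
The paper does not prove Fact~\ref{basic facts} at all: it is stated as a \emph{Fact} with explicit references (Lemma~24.1, Theorem~24.2, Corollary~25.2) to Adeleke and Neumann's monograph, and no argument is given. So there is no ``paper's own proof'' to compare your proposal against; you are supplying what the authors deliberately outsourced.

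That said, a brief assessment of your sketches on their own merits. Your argument for part~(4) is correct and clean: the four witnesses $a,b,c,d$ force $D(ac;bd)$ from the $\Sigma_1$-splitting and $D(bc;ad)$ from the $\Sigma_2$-splitting, and (D1)--(D2) make these incompatible. Your outline for part~(2) is the right construction (and is essentially what is carried out in the paper's own Lemma~\ref{induced splitting} and Lemma~\ref{splitting extension lemma} in a slightly different guise). Your sketch for~(3) is too vague to evaluate; you would need to say precisely which $D$-relations become inconsistent.

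The genuine soft spot is part~(1). Your first step is fine: applying (D5) to $(b,c,a_1)$ yields $z\neq a_1$ with $D(bc;a_1z)$, and your case analysis correctly forces $z\in\Sigma_1$. But the iteration you propose, applying (D5) to $(a_n,c,a_1)$, only guarantees $z\neq a_1$ and $z\neq a_n$; nothing prevents $z$ from coinciding with one of $a_2,\dots,a_{n-1}$. You flag this yourself, but ``choosing triples with some care'' is not yet an argument. One way to close the gap is to show first that for distinct $a,a'\in\Sigma_1$ and $b\notin\Sigma_1$, (D5) applied to $(a,b,a')$ produces a new element of $\Sigma_1$ strictly ``closer'' to $a'$ in the sense that $D(ab;a'z)$ forces $D(a a_i; a' z)$ to fail for each earlier $a_i$, and then thread the induction through that monotonicity. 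Alternatively, consult Adeleke--Neumann directly, since the paper treats this as a black box.
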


\begin{lemma}\label{Sector disjoint from set}
Let $\Omega$ be a $D$-set with $|\Omega| \geq 3$, $A$ be a finite subset of $\Omega$ and $c \in \Omega \setminus A$. Then there is a sector $\Sigma$ of some splitting of $\Omega$ such that $c \in \Sigma$ and $A \cap \Sigma = \emptyset$.
\end{lemma}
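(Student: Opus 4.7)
The plan is to induct on $n := |A|$, with the base cases $n = 0$ and $n = 1$ handled directly by Fact~\ref{basic facts}(2). For $n = 0$ any sector containing $c$ works; for $n = 1$, writing $A = \{a\}$ and picking any $b \in \Omega \setminus \{a, c\}$ (possible since $|\Omega| \geq 3$), Fact~\ref{basic facts}(2) applied to $a, b, c$ gives a node splitting whose sector containing $c$ omits $a$.

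For the inductive step, assume the conclusion for all subsets of size $\leq n$ and suppose $|A| = n + 1 \geq 2$. Fix $a \in A$ and apply the inductive hypothesis to $A \setminus \{a\}$ to obtain a sector $\Sigma$ of some splitting with $c \in \Sigma$ and $\Sigma \cap (A \setminus \{a\}) = \emptyset$. If $a \notin \Sigma$ we are done; otherwise $\Sigma \cap A = \{a\}$. Pick any $e \in A \setminus \{a\}$, which is nonempty because $|A| \geq 2$; by the choice of $\Sigma$, we have $e \neq c$ and $e \notin \Sigma$. Applying Fact~\ref{basic facts}(2) to the distinct elements $c, a, e$ yields a node splitting in which they lie in three different sectors, and we let $\Sigma'$ be the one containing $c$, so that $a, e \notin \Sigma'$.

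Since both $\Sigma$ and $\Sigma'$ contain $c$, Fact~\ref{basic facts}(4) leaves only three possibilities: $\Sigma \subseteq \Sigma'$, $\Sigma' \subseteq \Sigma$, or $\Sigma \cup \Sigma' = \Omega$. The first is excluded because $a \in \Sigma \setminus \Sigma'$, and the third is excluded because $e$ belongs to neither sector. Hence $\Sigma' \subseteq \Sigma$, and thus $\Sigma' \cap A \subseteq (\Sigma \cap A) \setminus \{a\} = \emptyset$, completing the induction.

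The main obstacle is the alternative $\Sigma \cup \Sigma' = \Omega$ in Fact~\ref{basic facts}(4): when this occurs the intersection $\Sigma \cap \Sigma'$ need not itself be a sector, so refining by naive intersection fails. The key trick is to pick the auxiliary element $e$ from $A \setminus \{a\}$, which places $e$ simultaneously outside $\Sigma$ (by the inductive choice of $\Sigma$) and outside $\Sigma'$ (by Fact~\ref{basic facts}(2) separating $e$ from $c$); this double exclusion rules out the obstructive case and forces $\Sigma'$ to be a genuine refinement of $\Sigma$.
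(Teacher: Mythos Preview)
Your proof is correct and follows essentially the same approach as the paper's: both induct on $|A|$, and at the inductive step both separate $c$, the offending element $a$, and some witness outside $\Sigma$ via Fact~\ref{basic facts}(2), then use Fact~\ref{basic facts}(4) to force the new sector to be contained in the old one. The only cosmetic difference is that the paper picks an arbitrary $b \in \Omega \setminus \Sigma_n$ as the third point, whereas you more economically reuse an element $e \in A \setminus \{a\}$ already known to lie outside $\Sigma$.
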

\begin{proof}
Let $A = \set{a_1, \dots, a_m}$. We construct the sector $\Sigma$ recursively by constructing a family of sectors which contain $c$. Let $\Sigma_0$ be any sector which contains $c$, which we can guarantee the existence of by applying Fact \ref{basic facts}(2) to $c$ and two other elements of $\Omega$ and extracting the sector containing $c$ from the resulting splitting.

Given a sector $\Sigma_n$ containing $c$ such that $\Sigma_n \cap \set{a_1, \dots, a_n}  = \emptyset$, then if $a_{n+1} \not\in \Sigma_n$ we take $\Sigma_{n+1} = \Sigma_n$, otherwise let $b$ be an arbitrary point in $\Omega \setminus \Sigma_n$. Again by Fact \ref{basic facts}(2) there exists a splitting such that $a_{n+1}, b$ and $c$ are in different sectors. Let $\Sigma_{n+1}$ be the one which contains $c$.

Notice that $\Sigma_n$ and $\Sigma_{n+1}$ have a nonempty intersection (since both contain $a_{n+1}$), their union is not all of $\Omega$ (since neither contains $b$), and $\Sigma_{n+1} \not\subseteq \Sigma_n$ (since $a_{n+1} \in \Sigma_n \setminus \Sigma_{n+1})$. This eliminates three out of four possibilities from Fact 5.5(4) and so the remaining one must be true: $\Sigma_{n+1} \subseteq \Sigma_n$ and so $c \in \Sigma_{n+1} \cap \set{a_1, \dots, a_{n+1}} = \emptyset$. Thus $\Sigma = \Sigma_m$ fulfills the statement.
\end{proof}

    \begin{lemma}
        If $\mathcal{C}$, $\mathcal{D}$ are splittings of a $D$-set $\Omega$ such that $\mathcal{C}$ is a strict refinement of $\mathcal{D}$, then $\mathcal{D}$ is an edge splitting, $\mathcal{C}$ is a node splitting, and $\mathcal{C}$ and $\mathcal{D}$ have at least one sector in common.
    \end{lemma}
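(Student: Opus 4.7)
The plan is to extract contradictions from strict refinement by exhibiting four points that force $D(ab;cd)$ via axiom~(1) applied to $\mathcal{D}$, while also forcing $\neg D(ab;cd)$ via axiom~(2) applied to $\mathcal{C}$. All three conclusions will fall out from this same tension.

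First I would establish that $|\mathcal{D}| = 2$. Since $\mathcal{C}$ strictly refines $\mathcal{D}$, there exists a sector $\Delta \in \mathcal{D}$ that splits into at least two distinct $\mathcal{C}$-sectors $\Gamma_1, \Gamma_2$; pick $a \in \Gamma_1$, $b \in \Gamma_2$. Suppose for contradiction that $\mathcal{D}$ has at least three sectors, so we can pick $c$ and $d$ in two different sectors of $\mathcal{D}$, both different from $\Delta$. Each of $c, d$ lies in some $\mathcal{C}$-sector contained in its respective $\mathcal{D}$-sector, and these two $\mathcal{C}$-sectors are disjoint from $\Gamma_1$ and $\Gamma_2$ (since they are contained in different sectors of the partition $\mathcal{D}$). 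Thus $a, b, c, d$ lie in four distinct $\mathcal{C}$-sectors, giving $\neg D(ab;cd)$ by condition~(2) of Definition~\ref{splitting} applied to $\mathcal{C}$; but $a, b \in \Delta$ and $c, d \notin \Delta$, so condition~(1) applied to $\mathcal{D}$ forces $D(ab;cd)$, a contradiction.

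Next, since $\mathcal{C}$ strictly refines $\mathcal{D}$ and $|\mathcal{D}| = 2$, we immediately get $|\mathcal{C}| \geq 3$: otherwise $\mathcal{C}$ would be a partition into at most two nonempty sets refining a partition into two nonempty sets, forcing $\mathcal{C} = \mathcal{D}$. Hence $\mathcal{C}$ is a node splitting and $\mathcal{D}$ is an edge splitting.

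Finally, write $\mathcal{D} = \{\Delta_1, \Delta_2\}$ with $\Delta_1 = \Delta$ partitioned by $\Gamma_1, \Gamma_2, \ldots$ as above. To show that $\mathcal{C}$ and $\mathcal{D}$ share a sector, it suffices to prove that $\Delta_2$ is itself a single $\mathcal{C}$-sector. Suppose not: then $\Delta_2$ contains two distinct $\mathcal{C}$-sectors $\Gamma_1', \Gamma_2'$. Pick $a \in \Gamma_1$, $b \in \Gamma_2$, $c \in \Gamma_1'$, $d \in \Gamma_2'$; these four $\mathcal{C}$-sectors are pairwise distinct because the pair $\{\Gamma_1, \Gamma_2\}$ lies in $\Delta_1$ and the pair $\{\Gamma_1', \Gamma_2'\}$ lies in the disjoint sector $\Delta_2$. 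The same double application of splitting conditions --- condition~(1) of $\mathcal{D}$ yielding $D(ab;cd)$ and condition~(2) of $\mathcal{C}$ yielding $\neg D(ab;cd)$ --- produces the required contradiction. Since $\mathcal{C}$ partitions $\Delta_2$, this forces $\Delta_2$ to coincide with a single element of $\mathcal{C}$, finishing the proof. The only subtle point throughout is keeping track of which splitting one is using for each axiom, but no genuine obstacle arises.
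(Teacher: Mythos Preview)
Your proof is correct and rests on exactly the same contradiction as the paper's: four points lying in distinct $\mathcal{C}$-sectors while two of them share a $\mathcal{D}$-sector, so that condition~(1) for $\mathcal{D}$ and condition~(2) for $\mathcal{C}$ collide. The organization, however, is inverted. The paper first proves the common-sector claim (arguing that if no sector were shared then \emph{every} $\mathcal{D}$-sector would split into at least two $\mathcal{C}$-sectors, and any two such would furnish the four points), and then invokes Fact~\ref{basic facts}(3)---no two distinct node splittings share a sector---to conclude that $\mathcal{D}$ must be the edge splitting. You instead prove $|\mathcal{D}|=2$ directly, deduce $|\mathcal{C}|\geq 3$ by counting, and only then prove the common-sector claim. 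Your route is a little more self-contained since it avoids the appeal to Fact~\ref{basic facts}(3); the paper's route is a little slicker in that it runs the four-point contradiction only once and lets the cited fact do the remaining work.
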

    \begin{proof}
        First let us prove they have at least one sector in common. If they did not, then in particular there would be two sectors $D_1, D_1 \in \mathcal{D}$ and four sectors $C_1, C_1', C_2, C_2' \in \mathcal{C}$ such that $C_i, C_i' \subseteq D_i$ for $i = 1,2$, but then if $a \in C_1,b \in C_1', c \in C_2, d \in C_2'$ then $ab|cd$ since $a,b$ are in the same $\mathcal{D}$-sector and $c,d$ are not, but that relation cannot hold since $a,b,c,d$ are all in different $\mathcal{C}$-sectors.

        Now since they have a sector in common, if both were node splittings they would be equal by Fact \ref{basic facts}(3), so one must be an edge splitting, and clearly it must be $\mathcal{D}$.
    \end{proof}

The following Lemma is exceptionally useful:

   \begin{lemma}[The One Sector Lemma]\label{the one sector lemma}
        Let $\mathcal{C}$ and $\mathcal{D}$ be two different splittings of a $D$-set $\Omega$. There is a sector $D_0$ of $\mathcal{D}$ such that all but one sectors of $\mathcal{C}$ are contained in $D_0$.
    \end{lemma}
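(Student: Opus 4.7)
The plan is to reformulate the conclusion and then argue by contradiction using the splitting axioms. First, I observe that the conclusion is equivalent to the existence of a pair $(C^{*}, D_{0}) \in \mathcal{C} \times \mathcal{D}$ satisfying $C^{*} \cup D_{0} = \Omega$, i.e.\ clause (iv) of Fact~\ref{basic facts}(4). Indeed, from such a pair, every $C' \in \mathcal{C}$ with $C' \neq C^{*}$ is disjoint from $C^{*}$ (partition), so $C' \subseteq \Omega \setminus C^{*} \subseteq D_{0}$, making $C^{*}$ the unique exceptional sector.

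Suppose no such pair exists. Since $\mathcal{C} \neq \mathcal{D}$, there is some $x \in \Omega$ whose $\mathcal{C}$-sector $C_{x}$ differs from its $\mathcal{D}$-sector $D_{x}$ (otherwise the partitions would coincide). The pair $(C_{x}, D_{x})$ shares $x$ so is not clause (iii), and is not clause (iv) by assumption, so by Fact~\ref{basic facts}(4) one strictly contains the other. Swapping the roles of $\mathcal{C}$ and $\mathcal{D}$ if necessary, assume $D_{x} \subsetneq C_{x}$. Now examine $(C_{x}, D)$ for arbitrary $D \in \mathcal{D}$: clause (iv) is excluded by assumption, and clause (i) ($C_{x} \subseteq D$) is excluded because strictness handles $D = D_{x}$ and disjointness of $\mathcal{D}$-sectors handles $D \neq D_{x}$. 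Hence each $D$ is either contained in $C_{x}$ or disjoint from it. The strict inclusion $D_{x} \subsetneq C_{x}$ forces at least two $\mathcal{D}$-sectors to be inside $C_{x}$, and if only one $\mathcal{D}$-sector lay outside $C_{x}$ it would equal $\Omega \setminus C_{x}$ and yield a clause (iv) pair, so at least two $\mathcal{D}$-sectors lie outside $C_{x}$ as well.

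To finish, pick $c, d \in C_{x}$ in two different $\mathcal{D}$-sectors inside $C_{x}$, and $a, b \notin C_{x}$ in two different $\mathcal{D}$-sectors outside $C_{x}$. Then $a, b, c, d$ belong to four distinct sectors of $\mathcal{D}$, so clause (2) of Definition~\ref{splitting} gives $\neg D(cd; ab)$; on the other hand $c, d$ lie in the common $\mathcal{C}$-sector $C_{x}$ and $a, b$ lie in its complement, so clause (1) of Definition~\ref{splitting} gives $D(cd; ab)$, a contradiction. The main obstacle is the bookkeeping that ensures both \emph{inside} and \emph{outside} contain at least two $\mathcal{D}$-sectors; this is exactly the step where the non-existence of a clause (iv) pair is essentially used, and it is what makes the four-sector configuration of the last step available.
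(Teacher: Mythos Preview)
Your proof is correct and takes a genuinely different route from the paper's. The paper first disposes of the case where one splitting refines the other (using the preceding lemma), then in the non-refinement case picks representatives $a_i \in C_i$, tracks which $\mathcal{D}$-sector each lands in, and uses a direct case analysis with explicit $D$-relation manipulations (via (D3)) to show that all but one of the $C_i$ end up inside a common $D_0$. Your argument instead recasts the conclusion as the existence of a covering pair $(C^*,D_0)$ with $C^*\cup D_0=\Omega$, assumes none exists, and then uses Fact~\ref{basic facts}(4) systematically to force every $\mathcal{D}$-sector to lie either entirely inside or entirely outside a fixed $C_x$, with at least two on each side; the resulting four-point configuration contradicts the two splitting axioms simultaneously. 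Your approach is shorter and more structural: it leans on Fact~\ref{basic facts}(4) rather than on (D3), and it avoids the refinement/non-refinement case split altogether. The paper's approach, by contrast, is more constructive in that it actually identifies $D_0$, which is perhaps closer in spirit to how the lemma is used downstream.
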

    \begin{proof}
        If $\mathcal{C}$ is a refinement of $\mathcal{D}$, then by the previous lemma $\mathcal{D}$ is an edge splitting consisting of one sector of $\mathcal{C}$ and the union of all other sectors of $\mathcal{C}$, and so the result holds. If $\mathcal{D}$ is a refinement of $\mathcal{C}$ then $\mathcal{C}$ is an edge splitting and the result is trivial.

        Now suppose neither refines the other. Then $\mathcal{C}$ and $\mathcal{D}$ share no sector in common (by Fact~\ref{basic facts} (3) if both are node splittings, and this is trivial if one is an edge splitting). Enumerate the (possibly infinitely many) sectors of $\mathcal{C}$ as $C_1, C_2, \cdots$ and take an element $a_i$ from each $C_i$. For each $i$ let $D_i$ be the sector of $\mathcal{D}$ which contains $a_i$. We shall prove there is a $D_0$ such that $D_i = D_0$ for all except at most one $i$, and this $D_0$ will fulfill the conditions of the lemma.

        If all $D_i$'s are in fact equal to the same $D_0 \in \mathcal{D}$, take any element in $\Omega \setminus D_0$, which lies in some $C_i$, and replace $a_i$ with this element so that there is at least one $D_i$ different from the rest.
        
        Now, since not all $D_i$'s are the same, suppose without restriction that $D_1$ and $D_2$ are different. First we shall prove that $C_1 \subseteq D_1$ or $C_2 \subseteq D_2$. Suppose neither holds and take $b_1 \in C_1 \setminus D_1$ and $b_2 \in C_2 \setminus D_2$. Then $a_1b_1|a_2b_2$ since $a_1, b_1 \in C_1$ and $a_2,b_2 \not \in C_1$, but this relation is impossible since $a_1$ and $b_1$ are in different $\mathcal{D}$-sectors and $a_2, b_2$ are as well. We conclude that $C_1 \subseteq D_1$ or $C_2 \subseteq D_2$ (suppose the latter). By an analogous argument either $D_1 \subseteq C_1$ or $D_2 \subseteq C_2$. Since the splittings share no sectors we must conclude the former (otherwise $D_2 = C_2$), and these must be strict containments, so then we may assume $D_1 \subsetneq C_1$ and $C_2 \subsetneq D_2$. Choose some $b \in C_1 \setminus D_1$.

        Let $i > 2$ and let $c \in C_i$. Since $a_1, b \in C_1$ but neither $a_2$ nor $c$ belongs to $C_1$, we have that $a_1 b|c a_2$. But since $b \notin D_1$, the elements $a_1$ and $b$ are in different $\mathcal{D}$-sectors, so for this relation to hold it must be that $c$ and $a_2$ are in the same $\mathcal{D}$-sector, that is to say $c \in D_2$. This proves $C_i \subseteq D_2$ for all $i > 2$, and we already have $C_2 \subseteq D_2$, so we may take $D_0 = D_2$ and the lemma holds.
    \end{proof}

    \begin{remark}
        The One Sector Lemma implies that for any pair of distinct splittings $\mathcal{C}_1$, $\mathcal{C}_2$, there exist unique sectors $\Sigma_1 \in \mathcal{C}_1, \Sigma_2 \in \mathcal{C}_2$ with the property that $\Sigma_1 \cup \Sigma_2 = \Omega$, and furthermore $\Sigma_1$ is precisely the sector of $\mathcal{C}_2$ which contains all but one sectors of $\Sigma_2$, with the exception being $\Sigma_2$, and vice-versa.
    \end{remark}

\section{D-sets and trees}

There is a known relationship between $D$-sets and trees, one direction of which is clear by the following fact, which is folklore:

\begin{fact}\label{Tree leaves are D-sets}
    On any tree, the quaternary relation $D(a b ; c d)$ on the leaves of the tree, which holds if the path from $a$ to $b$ is disjoint from the path from $c$ to $d$, is a $D$-relation
\end{fact}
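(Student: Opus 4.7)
The plan is to verify axioms (D1)--(D4) one by one, using standard facts about paths in trees. Fix a tree $T$ and for any two leaves $a,b$ let $[a,b]$ denote the unique path between them (thought of as a set of vertices and edges). Define $D(ab;cd)$ to mean $[a,b] \cap [c,d] = \emptyset$.

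Axioms (D1) and (D4) are essentially immediate. (D1) follows from $[a,b]=[b,a]$ and from the symmetry of disjointness of sets. For (D4), if $y$ is a leaf distinct from $w$ and $x$, then $y$ cannot lie on $[w,x]$: any internal vertex of $[w,x]$ has degree at least $2$ on the path, contradicting $y$ being a leaf. Hence $[w,x] \cap [y,y] = [w,x] \cap \{y\} = \emptyset$.

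The main structural tool for (D2) and (D3) is the Steiner tree (or ``quartet tree'') of four leaves. When $D(ab;cd)$ holds, the subtree spanned by $\{a,b,c,d\}$ consists of the two disjoint paths $[a,b]$ and $[c,d]$ together with a (possibly trivial) connecting path $[u,w]$, where $u$ is the unique point of $[a,b]$ closest to $[c,d]$ and $w$ is the unique point of $[c,d]$ closest to $[a,b]$; moreover $[u,w]$ meets $[a,b]$ only at $u$ and meets $[c,d]$ only at $w$. With this picture, (D2) follows by observing that $[a,c]$ must use the subpath of $[a,b]$ from $a$ to $u$, then all of $[u,w]$, then the subpath of $[c,d]$ from $w$ to $c$; symmetrically $[b,d]$ contains $[u,w]$, so these two paths share at least the nontrivial connector $[u,w]$ (or its single common endpoint if $u=w$).

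The main work is (D3). Assuming $D(ab;cd)$ and given an additional leaf $v$, let $q$ be the unique point of the Steiner tree of $\{a,b,c,d\}$ at which the path from $v$ first meets it. I would do a three-case analysis on the location of $q$: (i) if $q \in [a,b]$, then $[v,b]$ is the concatenation of $[v,q]$ with the subpath of $[a,b]$ from $q$ to $b$, both of which avoid $[c,d]$, so $D(vb;cd)$ holds; (ii) by the symmetric argument, if $q \in [c,d]$ then $[v,c]$ avoids $[a,b]$ and $D(ab;cv)$ holds; (iii) if $q$ lies on the interior of the connector $[u,w]$, then $[v,b]$ travels $v \to q \to u \to b$ and meets $[c,d]$ nowhere (since the connector touches $[c,d]$ only at $w$, which is not traversed), so again $D(vb;cd)$ holds. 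The expected obstacle is simply keeping the case analysis clean, in particular handling the degenerate case $u=w$ and the boundary cases $q=u$ or $q=w$, all of which reduce to one of the three generic cases above.
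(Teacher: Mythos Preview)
Your proof is correct. The paper itself does not supply a proof of this fact: it is stated as folklore and simply asserted, so there is no ``paper's own proof'' to compare against. Your Steiner-tree/quartet argument is the standard way to verify (D2) and (D3), and your handling of the degenerate cases ($u=w$, $q\in\{u,w\}$, and the various coincidences among $a,b,c,d,v$) is sound. One very minor remark: in (D2) and (D3) you implicitly assume the four points $a,b,c,d$ are distinct when invoking the quartet picture; the cases where some coincide are immediate (e.g.\ if $a\in\{c,d\}$ the hypothesis $D(ab;cd)$ already fails since $a\in[a,b]\cap[c,d]$, and if $c=d$ or $a=b$ the conclusion is easy), so this is not a gap but you might note it explicitly.
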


Our goal for this section will be to prove a converse for Fact \ref{Tree leaves are D-sets} for finite $D$-sets. That is to say, we shall prove that every finite $D$-set may be obtained from a finite tree in the manner so described. The tools developed along the way will be useful for characterizing ultrahomogeneity.

\begin{definition}
    Given a $D$-set $\Omega$ and three distinct points $a,b,c$, the \emph{branch of $a$ over $\{b,c\}$}, denoted by $B(a; bc)$, is the set of $x \in \Omega \setminus \set{a}$ such that $bc|ax$.
\end{definition}

\begin{lemma}\label{shrinkage lemma}
    If $d \in B(a,bc)$, then $B(a,dc) \subsetneq B(a,bc)$.
\end{lemma}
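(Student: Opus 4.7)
The plan is to split the proof into the two assertions hidden in $\subsetneq$: first establish the inclusion $B(a;dc) \subseteq B(a;bc)$, then exhibit a concrete element witnessing that it is strict. Unpacking the definition, $d \in B(a;bc)$ gives $D(bc;ad)$, and an arbitrary $x \in B(a;dc)$ gives $x \neq a$ together with $D(dc;ax)$; the goal of the inclusion step is to derive $D(bc;ax)$, and strictness will come for free by showing $d \notin B(a;dc)$.

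For the inclusion, the pivotal move is to apply axiom (D3) to $D(bc;ad)$ with the universally quantified variable instantiated as $v := x$. This yields the disjunction
\[
D(xc;ad) \;\vee\; D(bc;ax).
\]
The right disjunct is exactly what we want, so it suffices to rule out the left. To do so I would appeal to (D2): the relation $D(xc;ad)$ implies $\neg D(xa;cd)$, while the symmetry axiom (D1) shows that our hypothesis $D(dc;ax)$ is equivalent to $D(xa;cd)$, producing the needed contradiction. That forces $D(bc;ax)$, i.e.\ $x \in B(a;bc)$.

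For strict containment, I would take $d$ itself as the witness. By hypothesis $d \in B(a;bc)$, so it suffices to verify $d \notin B(a;dc)$, i.e.\ that $D(dc;ad)$ fails. Applying (D1) symmetries this is the same as asking whether $D(ad;dc)$ can hold, and plugging $w := a$, $x := d$, $y := d$, $z := c$ into (D2) gives $D(ad;dc) \rightarrow \neg D(ad;dc)$, a self-contradiction. Hence $d$ lies in $B(a;bc) \setminus B(a;dc)$, completing the argument.

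I do not anticipate any real obstacle: the whole proof is a short axiom-chase. The only genuinely non-routine step is guessing the right parameter $v := x$ in (D3); everything else is bookkeeping with the symmetries of (D1) and one application of (D2). A minor side check, which I would do once and then suppress, is that $d \neq c$ (so $B(a;dc)$ is well-defined), and this follows from the same (D2) trick showing that $D(bc;ac)$ is inconsistent via (D1)$+$(D2).
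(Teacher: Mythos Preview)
Your proof is correct and follows essentially the same strategy as the paper: establish the inclusion via a single application of (D3) and rule out the unwanted disjunct with (D1)$+$(D2), then witness strictness with $d$ itself. The only cosmetic difference is which relation you feed into (D3): the paper applies (D3) to $D(dc;ax)$ with $v:=b$, while you apply it to $D(bc;ad)$ with $v:=x$; in both cases one disjunct is the target $D(bc;ax)$ and the other is killed by the remaining hypothesis.
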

\begin{proof}
    \begin{equation*}
        \begin{split}
            x \in B(a,dc) 
            & \Longrightarrow dc|ax\\ 
            &\stackrel{(D3)}{\Longrightarrow} bc|ax \vee dc|ab \quad \textrm{(latter false since $d \in B(a,bc) \Rightarrow bc|ad$ instead)}\\
            &\Longrightarrow bc|ax\\
            &\Longrightarrow x \in B(a,bc).
        \end{split}
    \end{equation*}
    Thus $B(a,dc) \subseteq B(a,bc)$ and the two sets cannot be equal since $d \in B(a,bc) \setminus B(a,dc)$.
\end{proof}

\begin{lemma}\label{induced splitting}
Suppose that $A \cup \{e\}$ is a finite $D$-set (with $e \notin A$, $|A| \geq 2$). Then there is a unique splitting $\mathcal{C}$ of $A$ satisfying the following conditions:
\begin{enumerate}
    \item If $a, b$ are any two elements of some $\Sigma \in \mathcal{C}$ and $c$ is some element of $A \setminus \Sigma$, then $D(ab; ce)$; and
    \item If $a$ and $b$ belong to different $\Sigma$'s, then $\neg D(ab; xe)$ for any $x \in A$.
\end{enumerate}
\end{lemma}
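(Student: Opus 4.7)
The plan is to establish uniqueness by a direct argument from the conditions, and then to obtain existence by extracting $\mathcal{C}$ from a carefully chosen splitting of the larger $D$-set $A \cup \{e\}$ in which $\{e\}$ appears as a singleton sector.

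\emph{Uniqueness.} Suppose $\mathcal{C}$ and $\mathcal{C}'$ both satisfy (1) and (2). If $a, b \in A$ lie in a common sector $\Sigma$ of $\mathcal{C}$, condition (1) gives $D(ab;ce)$ for any $c \in A \setminus \Sigma$ (such $c$ exists, since $\mathcal{C}$ has at least two blocks by Definition~\ref{splitting}). Were $a, b$ to lie in distinct sectors of $\mathcal{C}'$, condition (2) for $\mathcal{C}'$ would force $\neg D(ab;ce)$, a contradiction. So $\mathcal{C}$ and $\mathcal{C}'$ induce the same equivalence relation on $A$ and thus coincide.

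\emph{Existence.} Apply Lemma~\ref{Sector disjoint from set} with $\Omega := A \cup \{e\}$ (note $|\Omega| \ge 3$), the finite subset $A$, and the point $e \in \Omega \setminus A$. This produces a splitting $\mathcal{D}$ of $\Omega$ and a sector $\Sigma$ of $\mathcal{D}$ with $e \in \Sigma$ and $\Sigma \cap A = \emptyset$, forcing $\Sigma = \{e\}$. Inspection of the proof of Lemma~\ref{Sector disjoint from set} reveals that $\Sigma$ is obtained as a sector of a node splitting (every iterate $\Sigma_n$ in that proof comes from an application of Fact~\ref{basic facts}(2)), so $\mathcal{D}$ has at least three sectors. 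Define $\mathcal{C} := \mathcal{D} \setminus \{\{e\}\}$: this is a partition of $A$ into at least two blocks, and the two axioms of Definition~\ref{splitting} for $\mathcal{C}$ follow from the corresponding axioms for $\mathcal{D}$ by restricting $c, d$ to $A$.

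It remains to check that $\mathcal{C}$ satisfies (1) and (2). For (1), given $a, b \in \Sigma' \in \mathcal{C}$ and $c \in A \setminus \Sigma'$, both $c$ and $e$ lie in $\Omega \setminus \Sigma'$, so condition (1) of the splitting $\mathcal{D}$ yields $D(ab;ce)$. For (2), let $a, b \in A$ lie in different $\mathcal{D}$-sectors and take $x \in A$. If $x \in \{a, b\}$, a short derivation from axioms (D1) and (D2) shows $\neg D(ab; xe)$. Otherwise, if $x$ lies in the same $\mathcal{D}$-sector as $a$ (the case for $b$ being symmetric), condition (1) of $\mathcal{D}$ gives $D(ax; be)$, and axiom (D2) converts this into $\neg D(ab; xe)$. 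Finally, if $x$ lies in a $\mathcal{D}$-sector distinct from both those of $a$ and $b$, then $a, b, x, e$ occupy four different sectors of $\mathcal{D}$, and condition (2) of $\mathcal{D}$ gives $\neg D(ab; xe)$ directly.

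The main subtlety is ensuring that the splitting $\mathcal{D}$ supplied by Lemma~\ref{Sector disjoint from set} is genuinely a node splitting: were it merely the edge splitting with sectors $\{e\}$ and $A$, the residue $\mathcal{C} = \{A\}$ would have only one block and fail Definition~\ref{splitting}. This forces us to peek into the construction behind Lemma~\ref{Sector disjoint from set} rather than quote only its statement.
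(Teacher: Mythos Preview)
Your proof is correct and takes a genuinely different route from the paper's. The paper constructs the splitting directly by defining an equivalence relation $a \sim b \Leftrightarrow \exists x \in A \, D(ab; ex)$ on $A$, then verifies transitivity through a four-case analysis on the $D$-relations holding among $\{a,b,c,e\}$, uses the shrinkage lemma (Lemma~\ref{shrinkage lemma}) on branches $B(e;ab)$ to show $\sim$ has at least two classes, and finally checks the splitting axioms and conditions (1), (2) by further manipulations with (D3). Your approach instead extracts the splitting structurally: find (via the construction inside Lemma~\ref{Sector disjoint from set}, which repeatedly invokes Fact~\ref{basic facts}(2)) a node splitting $\mathcal{D}$ of $A \cup \{e\}$ in which $\{e\}$ is a singleton sector, and set $\mathcal{C} = \mathcal{D} \setminus \{\{e\}\}$. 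This sidesteps almost all of the case analysis, and the verification of (1) and (2) becomes an immediate transfer from the splitting axioms of $\mathcal{D}$. The trade-off, which you flag yourself, is that you must invoke the \emph{proof} of Lemma~\ref{Sector disjoint from set} rather than just its statement in order to know $\mathcal{D}$ has at least three sectors; the paper's direct construction is longer but self-contained in this respect.
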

\begin{proof}
    Define an equivalence relation $\sim$ on $A$ by $a \sim b$ if and only if there exists $x \in A$ such that $ab|ex$. Reflexivity is clear from the definition. Symmetry follows from (D1). To see that it is transitive, let $a \sim b$ and $b \sim c$. We shall consider the (up to symmetry) three $D$-relations which may hold on $\set{a,b,c,e}$, plus the case of there being no relations, separately:
\begin{itemize}
    \item Case 1, $ab|ce$:

    Let $x$ be a witness to $b\sim c$, that is, $x$ is such that $bc|ex$. Since $ab|ce$, by (D3) either $xb|ce$ or $ab|ex$. However, the first one cannot be since $bc|ex$ holds instead, hence $ab|ex$, and now by the Transitivity Lemma $ab|cx$ and so $a \sim c$

    \item Case 2, $ac|be$:

    Let $x$ be a witness to $b \sim c$, that is $bc|ex$. As before, by (D3) either $cx|be$ or $ac|ex$. The first contradicts $bc|ex$, so $ac|ex$ and so $a \sim c$.

    \item Case 3, $ae|bc$:

    Identical to the first case, switching $a$ with $c$.

    \item Case 4, no $D$-relations hold on $\set{a,b,c,e}$:

    Let $x$ be a witness to $b \sim c$, that is $bc|ex$. Then by (D3) either $ac|ex$ or $bc|ea$. The latter contradicts the hypothesis, hence $ac|ex$ and so $a \sim c$.
    
\end{itemize}

    We shall define the sets $\Sigma_i$ as the equivalence classes of this relation. For this to have a chance of being a splitting at all, the relation must have at least two classes, so let us check that $\sim$ is not the universal relation. Since $A$ is finite, by iteratively applying Lemma \ref{shrinkage lemma} to $B(e, a_0b)$ for arbitrary $a_0, b \in A$ we obtain that there exists an $a \in A$ such that $B(e, ab)$ is empty, but this is equivalent to stating that $a \not\sim b$ and so $\sim$ is not universal.
    
    Now we officially define $\mathcal{C}$ as the equivalence classes of $\sim$. We shall prove that this partition is a splitting and satisfies the lemma. Let $a,b \in \Sigma \in \mathcal{C}$ and $c,d \not\in \Sigma$, then there is an $x$ such that $ab|xe$, and so by (D3) either $ac|ex$ or $ab|ec$. The first implies $a \sim c$ so $ab|ec$. This already gives us condition (1) of this lemma. From here, by (D3) once again we have either $ad|ec$ or $ab|cd$, and the first implies $a \sim d$, so $ab|cd$ and so this partition satisfies condition (1) of Definition \ref{splitting}.

    Now suppose that $D(ab; cd)$ holds for some tuple of distinct elements. By applying (D3) we obtain either $eb|cd$ or $ab|ce$, which implies either $a \sim b$ or $c \sim d$. Therefore no quadruple of elements in distinct $\Sigma$'s can be $D$-related and so condition (2) of Definition \ref{splitting} is satisfied. Additionally, if $D(a, b; x, e)$ holds for any $x \in A$ then by definition $a \sim b$, so if $a \not\sim b$ then $\lnot D(a, b,c,e)$ and so condition (2) of this lemma is satisfied as well. We conclude that the classes of $\sim$ are indeed a splitting which satisfies the lemma.

    To see the uniqueness of this splitting, let $\mathcal{D}$ be a splitting which satisfies the lemma. Let $a,b \in D \in \mathcal{D}$ and $c \in D' \in \mathcal{D}$ with $D \neq D'$. Then by hypothesis $ab|ec$, but then $a \sim b$. On the other hand if $a \sim b$ then $ab|xe$ for some $x$, but then $a$ and $b$ cannot be in different $D$'s or it would contradict (2). We conclude that any two elements are related by $\sim$ if and only if they lie on the same $D$. That is to say $\mathcal{D}$ is the same partition as $\mathcal{C}$.
\end{proof}

The splitting defined in the Lemma above is called the \emph{splitting of $A$ induced by $e$}.

\begin{remark}
    Suppose two elements $e$ and $e'$ induce the same splitting on a finite $D$-set $A$, then $A \cup \set{e}$ is isomorphic to $A \cup \set{e'}$ with the isomorphism $id_A \cup \set{(e, e')}$.
\end{remark}
\begin{remark}
    If $e$ induces a splitting $\mathcal{C}$ on a set $A$, then $\mathcal{C} \cup \set{\set{e}}$ is a splitting of $A \cup \set{e}$.
\end{remark}

\begin{lemma}\label{BAF lemma 2}
    Let $A$ be a finite D-set and $\mathcal{C}$ be a splitting of $A$. Then for any sector $\Sigma \in \mathcal{C}$ and every $a \in \Sigma$  there exists a (not necessarily unique) $b \in \Sigma$ such that if $c \in \Sigma$ and $d \not\in \Sigma$, then $\lnot D(ab; cd)$.
\end{lemma}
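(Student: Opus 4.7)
The plan is to reduce the universally quantified condition on pairs $(c,d)$ with $c \in \Sigma$, $d \notin \Sigma$ to a condition involving only a single fixed auxiliary point $e \notin \Sigma$, and then to invoke Lemma~\ref{induced splitting}. First I would dispose of the degenerate case $|\Sigma| = 1$ by taking $b = a$: the only $c \in \Sigma$ is then $a$ itself, and axiom (D2) with $w = x = y = a$ and $z = d$ instantiates to $D(aa; ad) \to \neg D(aa; ad)$, so the required condition holds vacuously.

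For the main case $|\Sigma| \geq 2$, fix any $e \in A \setminus \Sigma$, which exists because $|\mathcal{C}| \geq 2$. The crucial reduction is that for $a, b \in \Sigma$, the condition we want is equivalent to $\neg D(ab; ce)$ for every $c \in \Sigma$. One direction is immediate by setting $d = e$. For the other, suppose $D(ab; cd)$ holds for some $c \in \Sigma$ and $d \notin \Sigma$; since $a, b \in \Sigma$ while $d, e \notin \Sigma$, condition (1) of Definition~\ref{splitting} applied to $\mathcal{C}$ yields $D(ab; de)$, and the Transitivity Lemma applied to $D(ab; cd)$ and $D(ab; de)$ then produces $D(ab; ce)$.

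Having made this reduction, I would apply Lemma~\ref{induced splitting} to the finite $D$-set $\Sigma \cup \{e\}$ to obtain the splitting $\mathcal{C}'$ of $\Sigma$ induced by $e$; by definition $\mathcal{C}'$ has at least two sectors, so one can choose $b$ to be any element of a sector of $\mathcal{C}'$ other than the one containing $a$. Clause (2) of Lemma~\ref{induced splitting} then states precisely that $\neg D(ab; xe)$ for every $x \in \Sigma$, which is exactly the reformulated condition. The only substantive step of the plan is the transitivity-based reduction to the single auxiliary element $e$; once that equivalence is established, the choice of $b$ is handed to us directly by the induced-splitting construction, so I do not expect any serious obstacle.
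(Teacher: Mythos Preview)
Your proof is correct. Both your argument and the paper's begin the same way: reduce the universal quantifier over $d \notin \Sigma$ to a single fixed element outside $\Sigma$ using the splitting property and the Transitivity Lemma. The difference lies in how the element $b$ is then produced. The paper works directly: it iteratively applies the shrinkage lemma (Lemma~\ref{shrinkage lemma}) to the branch $B(d,ab)$, replacing $b$ by any $b' \in B(d,ab) \cap \Sigma$ until that intersection is empty. You instead observe that this is precisely what Lemma~\ref{induced splitting} already packages: applying it to the sub-$D$-set $\Sigma \cup \{e\}$ yields a splitting $\mathcal{C}'$ of $\Sigma$, and choosing $b$ in a different $\mathcal{C}'$-sector from $a$ makes clause~(2) of that lemma deliver $\neg D(ab;ce)$ for all $c \in \Sigma$ immediately. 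Your route is a clean reuse of existing machinery (the shrinkage iteration is hidden inside the proof of Lemma~\ref{induced splitting}, where it shows the relation $\sim$ is not universal), whereas the paper's route is more self-contained and avoids the small overhead of checking that Lemma~\ref{induced splitting} applies to the substructure $\Sigma \cup \{e\}$. Your separate handling of the degenerate case $|\Sigma|=1$ via (D2) is also fine, though the paper's argument simply does not need to single it out.
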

\begin{proof}
    First, take an arbitrary pair $b \in \Sigma$, $d \not\in \Sigma$. Notice that if $d'$ is any element of $\Omega \setminus \Sigma$, then $B(d', ab) = B(d, ab)$, since $ab|dd'$ and so if $ab|d'x$ then $ab|dx$ by transitivity. 
    
    Now, by successive application of Lemma \ref{shrinkage lemma} there exists a choice of $b_0 \in \Sigma$ such that $B(d,ab_0)$ does not contain any elements of $\Sigma$, since if there is $b' \in B(d,ab) \cap \Sigma$ then $B(d,ab')$ contains strictly fewer elements of $\Sigma$.

    We conclude that all $B(d, ab_0)$ for $d \not \in \Sigma$ are disjoint from $\Sigma$, which is equivalent to $b = b_0$ satisfying the conclusion of the lemma.
\end{proof}

 We call a $b \in \Sigma$ satisfying the conditions of this lemma for an $a \in \Sigma$ a \emph{complementary element to $a$} (with respect to the splitting).

\begin{lemma}
    Let $A \cup \set{e}$ be a finite $D$-set ($e \not \in A$, $|A| \geq 2)$, $\mathcal{C}$ be the splitting of $A$ induced by $e$, $\Sigma \in \mathcal{C}$ be a sector, $a \in \Sigma$ be arbitrary and $b \in \Sigma$ be complementary to $a$. Then for any $x \in \Sigma$, $\lnot D(ab;xe)$.

\end{lemma}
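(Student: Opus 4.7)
The plan is to combine the two pieces of structure established just before the statement: condition (1) of the induced-splitting lemma (which tells us $D(ab;de)$ whenever $d \in A\setminus\Sigma$) and the complementary property of $b$ (which tells us $\lnot D(ab;cd)$ whenever $c\in\Sigma$ and $d\in A\setminus\Sigma$). The Transitivity Lemma will serve as the bridge between these two facts, letting us pivot through the point $e$.

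Concretely, I would start by picking any $d \in A\setminus\Sigma$; such an element exists because a splitting has at least two sectors by Definition~\ref{splitting}. Since $a, b \in \Sigma$ and $d \notin \Sigma$, condition (1) of Lemma~\ref{induced splitting} yields $D(ab;de)$, and by (D1) this rearranges to $D(ab;ed)$. Now suppose toward a contradiction that $D(ab;xe)$ holds for some $x\in\Sigma$. Applying the Transitivity Lemma to $D(ab;xe)$ and $D(ab;ed)$ — with the shared pair $ab$ on the left and the common entry $e$ on the right — produces $D(ab;xd)$. But $x\in\Sigma$ and $d\in A\setminus\Sigma$, so the complementary property of $b$ directly gives $\lnot D(ab;xd)$, a contradiction.

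I do not expect any significant obstacle; the proof is essentially a one-line application of transitivity sandwiched between two already-proven facts. The only very minor wrinkle is the case where $x$ coincides with $a$ or $b$, but even then the transitivity step is formally valid, and the complementary property is stated for all $c\in\Sigma$ without excluding $c \in \{a,b\}$, so the contradiction is reached uniformly. The main conceptual content is recognizing that $e$ is the right pivot element through which to chain $D(ab;xe)$ with the guaranteed $D(ab;de)$.
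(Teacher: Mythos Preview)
Your proof is correct and follows essentially the same strategy as the paper's: pick an auxiliary point in $A\setminus\Sigma$ (the paper calls it $y$, you call it $d$), invoke the induced-splitting property for that point, and derive a contradiction with the complementary property of $b$. The only difference is the bridging step: the paper applies (D3) to $ab|xe$ with the auxiliary point, obtaining the disjunction $yb|xe \vee ab|xy$ and ruling out each disjunct separately (the first against $xb|ye$, the second against complementarity), whereas you use the Transitivity Lemma to pass directly from $D(ab;xe)$ and $D(ab;ed)$ to $D(ab;xd)$, hitting complementarity in one stroke. Your route avoids the case split and is marginally cleaner; otherwise the two arguments are interchangeable.
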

\begin{proof}
    Suppose $ab|xe$ for some $x \in \Sigma$ and let $y \in A\setminus \Sigma$ be arbitrary, then by (D3) either $yb|xe$ or $ab|xy$. The former contradicts $xb|ye$ which holds since $e$ induces $\mathcal{C}$, while the latter contradicts that $a$ and $b$ are complementary.
\end{proof}

\begin{lemma}[Sector Indifference Principle]\label{sector indifference principle}
    Let $A \subseteq \Omega$ be a set, $\mathcal{C}$ be a splitting and $\mathcal{C}(A)$ denote the union of the sectors of $\mathcal{C}$ which contain elements of $A$. Then all elements of $\Omega \setminus \mathcal{C}(A)$ have the same quantifier-free type over $A$.
\end{lemma}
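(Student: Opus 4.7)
The plan is to reduce to a finite parameter set $A_0$ (since quantifier-free types are determined by atomic formulas, each mentioning finitely many parameters) and then apply the uniqueness clause of Lemma \ref{induced splitting}. The cases $|A_0| \leq 1$ follow directly from the axioms, as every atomic formula involving a pair of distinct elements is already forced true or false by (D1)--(D4). So assume $|A_0| \geq 2$ and let $x, y \in \Omega \setminus \mathcal{C}(A)$; note that $x, y \in \Omega \setminus \mathcal{C}(A_0)$ since $\mathcal{C}(A_0) \subseteq \mathcal{C}(A)$.

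Denote by $\mathcal{C}_x$ and $\mathcal{C}_y$ the splittings of $A_0$ induced by $x$ and $y$ in the sense of Lemma \ref{induced splitting}. The core of the argument is to show that both equal the ``traced'' partition $\mathcal{E} = \{\Sigma \cap A_0 : \Sigma \in \mathcal{C},\ \Sigma \cap A_0 \neq \emptyset\}$. Since $x$ lives in a sector $\Sigma_x \in \mathcal{C}$ that is disjoint from $A$, we have $x \notin \Sigma$ for every $\Sigma$ meeting $A_0$, and this is what makes the argument go. Verifying clause (1) of Lemma \ref{induced splitting} for $(\mathcal{E}, x)$ is then a direct application of property (1) of the splitting $\mathcal{C}$. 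Verifying clause (2), which asserts $\neg D(ab; zx)$ when $a, b$ lie in distinct blocks of $\mathcal{E}$ and $z \in A_0$, requires a small case analysis on the sector of $\mathcal{C}$ containing $z$: if $z$ shares a sector with $a$ or with $b$, the conclusion comes from property (1) of $\mathcal{C}$ combined with (D2); otherwise $a, b, z, x$ all lie in four distinct sectors and property (2) of the splitting applies directly. The uniqueness clause in Lemma \ref{induced splitting} then forces $\mathcal{E} = \mathcal{C}_x$, and the symmetric argument gives $\mathcal{E} = \mathcal{C}_y$.

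Once $\mathcal{C}_x = \mathcal{C}_y$ is established, the remark following Lemma \ref{induced splitting} provides an isomorphism of $D$-sets $A_0 \cup \{x\} \to A_0 \cup \{y\}$ fixing $A_0$ and sending $x$ to $y$, whence $x$ and $y$ satisfy the same atomic formulas with parameters from $A_0$. As $A_0$ was an arbitrary finite subset of $A$, this yields $\qftp(x/A) = \qftp(y/A)$. The main obstacle is the clause (2) case split above; it is purely combinatorial, but it is the one place where the interaction between $\mathcal{C}$ and the induced splittings of $A_0$ must be unpacked carefully.
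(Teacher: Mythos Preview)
Your approach is genuinely different from the paper's. The paper gives a short direct argument: for $b,b' \in \Omega \setminus \mathcal{C}(A)$ and $a_1,a_2,a_3 \in A$, it applies (D3) to $ba_1|a_2a_3$ with $v=b'$, obtaining $b'a_1|a_2a_3 \lor ba_1|a_2b'$, and rules out the second disjunct by observing that in $\{a_1,a_2,b,b'\}$ the only pairs that can share a $\mathcal{C}$-sector are $\{a_1,a_2\}$ and $\{b,b'\}$, both of which force $a_1a_2|bb'$. Your route through Lemma~\ref{induced splitting} is more structural, but see below.

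There is a real gap in your argument. When the finite set $A_0$ lies entirely inside a single sector $\Sigma$ of $\mathcal{C}$, the traced partition $\mathcal{E}$ has only one block and is therefore not a splitting at all, so the uniqueness clause of Lemma~\ref{induced splitting} cannot be invoked. Worse, in this situation $\mathcal{E}$ is typically \emph{not} equal to $\mathcal{C}_x$: for instance if $A_0=\{a_1,a_2,a_3\}\subseteq\Sigma$ with $a_1a_2|a_3c$ for some $c \notin \Sigma$, then $a_1a_2|xa_3$ holds and $x$ induces the splitting $\{\{a_1,a_2\},\{a_3\}\}$ on $A_0$, not the trivial partition. The desired conclusion $\mathcal{C}_x=\mathcal{C}_y$ is still true here, but it requires exactly the kind of direct (D3) manipulation the paper uses: from $ab|zx$ one gets $yb|zx \lor ab|zy$, and the first disjunct is killed by $bz|xy$ (which holds since $b,z\in\Sigma$ and $x,y\notin\Sigma$). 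So to patch your argument you would need to insert essentially the paper's computation for this edge case, at which point the detour through Lemma~\ref{induced splitting} no longer buys anything. The paper's proof handles all cases uniformly in three lines.
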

\begin{proof}
    By symmetry, it suffices to show for any three $a_1, a_2, a_3 \in A$ and any two $b, b' \in \Omega \setminus \mathcal{C}(A)$, that $D(ba_1;a_2a_3) \Leftrightarrow D(b'a_1;a_2a_3)$.

    Suppose $ba_1|a_2a_3$. Then by (D3) either $b'a_2|a_3a_4$ or $b a_1|a_2b'$. We claim the latter to be impossible, since by hypothesis, in the set $\set{a_1, a_2, b, b'}$ the only two pairs of elements which could share a sector of $\mathcal{C}$ are $\set{a_1, a_2}$ and $\set{b, b'}$. Either pair sharing a sector yields the relation $a_1a_2|bb'$ and neither pair sharing a sector implies no $D$-relations hold on $\set{a_1, a_2, b, b'}$. Both possibilities are incompatible with $b a_1|a_2b'$, so we conclude that $b'a_2|a_3a_4$. That is to say $D(ba_1;a_2a_3) \Rightarrow D(b'a_1;a_2a_3)$ and the other direction follows symmetrically.
\end{proof}

\begin{theorem}\label{D set extension lemma}
    Let $A $ be a finite $D$-set ($|A| \geq 2$), $\mathcal{C}$ be a splitting of $A$. Then $A$ may be extended to a $D$-set $A \cup \{e\} $ such that $\mathcal{C}$ is the splitting of $A$ induced by $e$. 

    Furthermore, the $D$-relation in the extended $D$-set is uniquely characterized by the following properties, where $a,b,c \in A$:
    \begin{enumerate}    
        \item $D(ab; ce)$ never holds if $a$ and $b$ lie in different components of $\mathcal{C}$.

        \item Whenever $a$ and $b$ are in the same component $\Sigma$ of $\mathcal{C}$, $D(a,b, c, e)$ holds if and only if  $D(ab;cx)$ holds for some (equivalently, any) $x \in A \setminus \Sigma$. (Note that if $c \not \in \Sigma$, then $D(ab;ce)$ holds automatically).
    \end{enumerate}
\end{theorem}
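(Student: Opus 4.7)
The plan is to construct the extension by giving an explicit definition of the $D$-relations involving $e$, and then to verify each $D$-axiom by case analysis on how many of the four arguments equal $e$. Specifically, for $a, b, c \in A$, declare $D(ab; ce)$ to hold if and only if $a$ and $b$ lie in a common sector $\Sigma \in \mathcal{C}$ and either $c \notin \Sigma$ or there exists $x \in A \setminus \Sigma$ with $D(ab; cx)$ holding in $A$; extend this to all other 4-tuples with $e$ via the symmetries demanded by (D1). Degenerate cases (where $e$ appears twice) are then forced by (D4) and (D1). The ``some equivalently any'' clause is well-defined: for any two $x, x' \in A \setminus \Sigma$, Definition~\ref{splitting} yields $D(ab; xx')$, and the Transitivity Lemma then forces $D(ab; cx) \Leftrightarrow D(ab; cx')$; when $c \notin \Sigma$ the two disjuncts in the definition are also consistent, since $D(ab; cc)$ holds by (D4).

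Axiom (D1) is automatic by construction and (D4) follows by direct inspection. For (D2), suppose toward contradiction that both $D(wx; yz)$ and $D(wy; xz)$ held with $e$ present; each would force a pair of $A$-elements into a common sector, and intersecting these containments collapses all $A$-arguments into one sector $\Sigma$, whereupon a common witness $u \in A \setminus \Sigma$ would yield two relations inside $A$ violating (D2). The substantive axiom is (D3): $D(wx; yz) \Rightarrow \forall v\,(D(vx; yz) \lor D(wx; yv))$. By (D1), it suffices to treat $z = e$, and $v = e$ is immediate. Assume $z = e$ and $v \in A$, so $w, x \in \Sigma$ and $D(wx; yu)$ holds in $A$ for some $u \in A \setminus \Sigma$ (one may take $u = y$ when $y \notin \Sigma$). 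If $v \in \Sigma$, apply (D3) inside $A$ with witness $v$ to $D(wx; yu)$ to obtain $D(vx; yu) \lor D(wx; yv)$; the former lifts to $D(vx; ye)$ and the latter is already in $A$. If $v \in A \setminus \Sigma$, then $D(vx; ye)$ fails (different sectors), while $D(wx; yv)$ holds in $A$, either by the splitting property (when $y \notin \Sigma$) or by using $v$ itself as the ``some'' witness (when $y \in \Sigma$). The cases $y = e$ and $w = e$ reduce to this one by (D1).

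Finally, the splitting of $A$ induced by $e$ in the sense of Lemma~\ref{induced splitting} is exactly $\mathcal{C}$, because the two conditions stated there are literally what the definition of $D(ab; ce)$ enforces. Uniqueness of the extension is then immediate: Lemma~\ref{induced splitting} says that any $D$-extension of $A$ which induces $\mathcal{C}$ must satisfy those two conditions, and between them they determine $D(ab; ce)$ for every $a, b, c \in A$ (and hence, via (D1) and (D4), all relations involving $e$). The main obstacle in this proof is the case bookkeeping inside (D3), where the interplay between the $\mathcal{C}$-based definition, the ``some equivalently any'' equivalence, and the applications of (D3) inside $A$ must be navigated without slip.
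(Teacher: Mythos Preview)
Your overall approach matches the paper's: define $D(ab;ce)$ via the two regimes, extend by (D1)/(D4), then verify the axioms. Your treatments of (D1), (D2), (D4), the ``some equivalently any'' equivalence, and uniqueness are all fine. The gap is in your (D3) verification.

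The claim ``by (D1), it suffices to treat $z=e$'' is false. Axiom (D3), written as $D(wx;yz)\to D(vx;yz)\vee D(wx;yv)$, is \emph{not} invariant under all of the Klein-four symmetries coming from (D1): the only symmetry of (D3) itself is $(w,x,y,z)\mapsto(z,y,x,w)$, which pairs the position of $w$ with that of $z$ and the position of $x$ with that of $y$. So the five possible locations for $e$ collapse to \emph{three} genuinely distinct cases --- $e\in\{w,z\}$, $e\in\{x,y\}$, and $e=v$ --- and the paper treats all three separately. Concretely, if $y=e$ then after the (D1)-swap $D(wx;ez)\Leftrightarrow D(wx;ze)$, applying your proven $z=e$ instance yields $D(vx;ze)\vee D(wx;zv)$, whereas what you need is $D(vx;ze)\vee D(wx;ve)$; the second disjuncts do not match. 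So the case $e\in\{x,y\}$ (the paper's Case~3) is simply missing from your argument, and you do not mention $x=e$ at all. Moreover, the case $v=e$ (the paper's Case~1) is not ``immediate'': one must first argue that $D(wx;yz)$ in $A$ forces $w\sim x$ or $y\sim z$ (using that $\mathcal{C}$ is a splitting), and then run a short but genuine (D3)-in-$A$ argument in each subcase. Your handling of $z=e$ itself is correct; you just need to supply the two missing cases rather than dismiss them.
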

\begin{proof}
    We must define the truth value of the $D$-relations which involve the new element $e$. if $e$ appears more than once in a quadruple, then the value the $D$-relation is set so as to fulfill (D3) (i.e. it is true if of the form $ee|xy$ or $xy|ee$ for $x,y \neq e$, false otherwise). If $e$ appears exactly once, note that we need only define the truth value if $e$ is in the fourth entry, since the values when $e$ is in another entry may be deduced from that by the requirement that (D1) is fulfilled. With that in mind, the characterization in the statement is enough to completely determine the $D$-relation.

    To avoid confusion later on in the proof, we shall refer to the two cases in the statement as regimes. Notice that by the Sector Indifference Principle, the truth value of $D(ab;cx)$ in regime (2) is independent of our choice of $x$. Hence the ``equivalently, any'' parenthetical.

    We check the $D$-set axioms. (D1) is trivial: we only defined the truth value when $e$ is in the fourth entry and its truth value for other cases is left to be decided so as to satisfy (D1), so there is nothing to check. (D4) is even more trivial by how we defined the case of an $e$ in multiple entries. (D2) is relatively straightforward; it suffices the check that $D(ab;ce)$ and $D(ac;be)$ never hold simultaneously and all other cases follow by applications of (D1). If it were the case that both held, then both would have to be in regime (2), so that $a,b,c$ are in the same $\Sigma$, but then $D(ab;cx)$ and $D(ac;bx)$ hold for some (in fact, any) $x \in A \setminus \Sigma$, which contradicts (D2) for $D$ on $A$.

    The last axiom to check is (D3), which in simplified notation states
    $$ab|cd \rightarrow ( xb|cd \vee ab|cx).$$ This proof must split into  cases to account for the possible variations of the axiom. The axiom contains 5 universally-quantified variables, and in principle we must check for cases where we replace any combination of them with $e$. Fortunately any cases in which two or more variables are replaced with $e$ (or more generally, in which two variables share a value) are trivial, and two of the cases are symmetric to another one, so in the end we really need only check three. Hereafter $a,b,c,d,x$ are assumed to be distinct elements of $A$. We shall use the notation $a \sim b$ to denote that $a$ and $b$ are in the same sector of $\mathcal{C}$, as in Lemma \ref{induced splitting}.
    
    \begin{itemize}
        \item Case 1: Check that $ab|cd \rightarrow (eb|cd \vee ab|ce)$. That is to say (D3) holds when $e$ replaces $x$.

        If $ab|cd$, then notice that it is not possible that the $a,b,c,d$ are all in distinct $\Sigma$'s, since in such a case no $D$-relation would hold between them. Also notice that either $a \sim b$ or $c \sim d$, since otherwise the only relations that could hold are from  $a \sim c, a\sim d, b \sim c$ or $b \sim d$. We shall only show the first is impossible (the others are analogous): If $a \sim c$ while $a \not\sim b$ and $c \not\sim d$, then we'd have $ac|bd$ (since $a,c$ are in the same $\Sigma$ and $b,d$ are not) in contradiction with the hypothesis.

        In summary, we are forced to have at least one $\sim$ relation in $\set{a,b,c,d}$, and it must include $a\sim b$ or $c \sim d$. We may suppose without loss it is $a \sim b$ since the other case is analogous by switching the roles of $a$ with $d$ and of $b$ with $c$.

        Let $y \in A$ be such that $y \not \sim a$, then by applying (D3) to $(a,b,c,d,y)$ we obtain that $yb|cd$ or $ab|cy$. If the latter holds we are done since then $ab|ce$ by the regime (2), so suppose that it does not. It must be that $c \sim a$, as otherwise $ab|cy$ automatically, and also $d \sim a$, as otherwise we could take $y = d$ and obtain that $D(a,b,c,d)$ is true and false. So then $a \sim b \sim c \sim d \not \sim y$ and so
        $$eb|cd \stackrel{\text{(D1)}}{\Longleftrightarrow} cd|be \stackrel{\text{def}}{\Longleftrightarrow}cd|by \stackrel{\text{(D1)}}{\Longleftrightarrow} yb|cd $$
        and this last one is true, so $eb|cd$ and we are done.

        \item Case 2: Check that $ab|ce \rightarrow (xb|ce \vee ab|cx)$ (Or equivalently by symmetry: $eb|cd \rightarrow (xb|cd \vee eb|cx)$). That is to say (D3) holds when $e$ replaces $d$ (or $a$).

        We must be in regime (2) for $ab|ce$ to hold, that is to say $a\sim b$ and there is a $y \not \sim a$ such that $ab|cy$, but then by (D3) applied to $(a, b, c, y, x)$ we have that $xb|cy$ or $ab|cx$. If the latter we are done, so suppose that $xb|cy$. If $x \sim b$ then $xb|ce$ by regime (2) since $xb|cy$ and $x\sim b \not \sim y$, and if $x \not\sim b$ then $ab|cx$ since $ab|ce$ and $a\sim b \not \sim x$

        \item Case 3: Check that $ab|ed \rightarrow (xb|ed \vee ab|ex)$ (Or equivalently by symmetry: $ae|cd \rightarrow (xe|cd \vee ae|cx)$. That is to say (D3) holds when $e$ replaces $c$ (or $b$).

        Notice that by (D1) and regime (2) we have that $ab|ed$ if and only if $a\sim b$ and $ab|yd$ for some (any) $y \not\sim a$. Like before we apply (D3) to this and obtain that $xb|yd$ or $ab|yx$. If the latter we are done since this is equivalent to $ab|ex$, so suppose that $xb|yd$. As before, if $x \sim b$ then $xb|ed$ by regime (2) and if $x \not \sim b$ then $ab|ex$ automatically.
    \end{itemize}

    Thus, axiom (D3) holds in the extension, so that $A \cup \set{e}$ is indeed a $D$-set. It now remains only to check that $e$ induces the given splitting, but this evident, since condition (1) of Lemma \ref{induced splitting} is guaranteed by the parenthetical in the second regime and condition (2) is guaranteed by regime (1).

\end{proof}

Now we are ready to prove the main theorem of this section, which serves as the motivation for the terminology introduced in Definition \ref{Types of splitting}.

\begin{theorem}\label{D-sets are trees}
    Any finite $D$-set $\Omega$ is isomorphic to the set of leaves of a finite tree with no binary nodes with the relation in Theorem \ref{Tree leaves are D-sets}. Furthermore, this tree is unique up to isomorphism and satisfies the following:
    \begin{enumerate}
        \item There exists a one-to-one correspondence between the inner nodes of $T$ and the splittings of $\Omega$ with three or more sectors. This correspondence maps nodes of degree $d$ to splittings with $d$ sectors, and is given by mapping an inner node $\mu$ to the restriction to the leaves of $T$ of the set of connected components of $T \setminus \{\mu\}$.

        \item There exists a one-to-one correspondence between the edges of $T$ and the splittings of $\Omega$ with two sectors, given by mapping an edge $\epsilon$ to the restriction to the leaves of $T$ of the set of connected components of $T \setminus \{\epsilon\}$.
    \end{enumerate}
\end{theorem}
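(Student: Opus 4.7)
The plan is to induct on $|\Omega|$. The base cases $|\Omega| \leq 2$ are immediate: the singleton $D$-set is a one-vertex tree (no splittings), while any two-element $D$-set is a single edge (no node splittings, and the unique edge splitting $\{\{a\},\{b\}\}$ matching the edge).

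For the inductive step with $|\Omega| \geq 3$, fix any $e \in \Omega$ and set $A := \Omega \setminus \{e\}$. By induction, $A$ is realized as the leaves of a unique tree $T_A$ with no binary nodes and with the stated correspondences. By Lemma~\ref{induced splitting}, $e$ induces a unique splitting $\mathcal{C}$ of $A$; via the inductive correspondences, $\mathcal{C}$ is identified with either an inner node $\mu$ (if $|\mathcal{C}| \geq 3$) or an edge $\epsilon$ (if $|\mathcal{C}| = 2$) of $T_A$. Build $T$ from $T_A$ by either attaching $e$ as a new leaf adjacent to $\mu$, or subdividing $\epsilon$ with a new degree-$3$ inner node $\nu$ and attaching $e$ to $\nu$. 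Neither operation creates a binary node: in the first case $\mu$ retains degree $\geq 3$, and in the second $\nu$ has degree $3$ while the degrees of all other vertices are unchanged.

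Next confirm that the tree-induced $D$-relation on $T$ agrees with the given one on $\Omega$: quadruples from $A$ are covered by the inductive hypothesis together with Fact~\ref{Tree leaves are D-sets}, and quadruples involving $e$ match Theorem~\ref{D set extension lemma}, since every path to $e$ in $T$ passes through $\mu$ (respectively $\nu$), so the disjointness of $[a,b]$ and $[c,e]$ is governed exactly by how $a,b,c$ sit relative to the sectors of $\mathcal{C}$, i.e., by regimes (1) and (2) of that theorem. To extend the correspondences, classify splittings $\mathcal{D}$ of $\Omega$: if $\{e\}$ is a sector of $\mathcal{D}$, the splitting axioms combined with Lemma~\ref{induced splitting} force $\mathcal{D}$ to be either $\{\{e\}, A\}$ (the edge splitting matching the newly-introduced edge incident to $e$) or $\mathcal{C} \cup \{\{e\}\}$ (the node splitting at $\mu$ in Case~1, or at $\nu$ in Case~2); otherwise $\mathcal{D}\restriction_A$ is a splitting of $A$, inductively identified with a node or edge of $T_A$, and $\mathcal{D}$ is assigned the same vertex or edge of $T$, with $e$ landing in the unique sector corresponding to the $T_A$-component containing the attachment point. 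Uniqueness of $T$ then follows from these bijections, since any tree representing $\Omega$ as stated must have its vertex set, edge set, and incidences pinned down by the node and edge splittings of $\Omega$.

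I expect the main obstacle to be this final classification step: verifying cleanly that in the non-singleton-$e$ case $\mathcal{D}\restriction_A$ really is a splitting of $A$ and that $e$ is forced into the geometrically correct sector, and dually that each splitting of $A$ different from $\mathcal{C}$ extends in exactly one way. This is an exercise in unwinding the splitting axioms with Lemma~\ref{induced splitting} and Theorem~\ref{D set extension lemma}, and is more tedious than conceptually deep.
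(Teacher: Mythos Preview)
Your proposal is correct and reaches the same conclusion, but the route differs from the paper's in two places worth noting.

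For locating where to attach $e$, the paper does \emph{not} carry the correspondences (1) and (2) through the induction. Instead it argues geometrically: it takes the convex hulls $\overline{\Sigma_i}$ of the sectors of $\mathcal{C}$ inside $T_A$, shows these are pairwise disjoint, and then proves that their union misses exactly one edge (when $|\mathcal{C}|=2$) or exactly one inner node (when $|\mathcal{C}|\geq 3$); that missing object is where $e$ gets attached. You instead strengthen the inductive hypothesis to include the bijections and read off the attachment point immediately as the node or edge of $T_A$ corresponding to $\mathcal{C}$. Your approach is tidier here and avoids the convex-hull case analysis, at the cost of having to propagate the correspondences through the inductive step.

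For uniqueness, the paper argues \emph{independently of the construction}: given any two trees $T,T'$ with no binary nodes realizing $\Omega$, it observes that each induces the stated bijections with node and edge splittings, and then shows that node--edge incidence in the tree is intrinsically characterized by a sector of the edge splitting coinciding with a sector of the node splitting (using Fact~\ref{basic facts}(3)). This yields an explicit isomorphism $T\cong T'$. Your uniqueness argument is implicit in the inductive bookkeeping, but as written it only shows the \emph{constructed} tree has the correspondences; to conclude uniqueness you still need the small observation that for \emph{any} representing tree, removing a node or edge produces a splitting, together with a reason why incidences are determined---exactly the paper's adjacency characterization. This is the one place where your sketch is thinner than it should be; the fix is short but should be stated. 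Relatedly, be careful in Case~2: when $\mathcal{D}\restriction_A=\mathcal{C}$, the edge $\epsilon$ is no longer present in $T$, and the two possible extensions of $\mathcal{C}$ to $\Omega$ must be matched to the two halves of the subdivided edge, so ``the same vertex or edge of $T$'' needs that small adjustment you allude to.
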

\begin{proof}
    For existence, we proceed by induction on the size of $\Omega$. Conditions (1) and (2) will follow immediately from the construction. The cases of $|\Omega| = 0, 1, 2$ are all trivial, since $D$-sets of this size are isomorphic to the leaves of, respectively, the empty tree, the singleton and the tree with two nodes.

    Now we write our $D$-set as $A \cup \set{e}$ with $e \not \in A$, $|A| \geq 2$. By induction hypothesis $(A, D)$ is isomorphic to the leaves of some finite tree $T$. We identify the elements of $A$ with the leaves of $T$. It remains only to ``attach'' the remaining leaf $e$ to the tree at either some edge or some node. Let $\set{\Sigma_i: i = 1, \dots, d}$ be the splitting of $A$ induced by $e$. For each $\Sigma_i$ define $\overline{\Sigma_i}$ to be the convex hull of $\Sigma_i$ in $T$. 
    
    Notice that the $\overline{\Sigma_i}$'s are pairwise disjoint, since if there is an $x \in \overline{\Sigma_i} \cap \overline{\Sigma_j}$ then $x$ is in the path between two points $a,b \in \Sigma_i$ and also between two points $c,d \in \Sigma_j$, but then $D(ab;cd)$ does not hold which contradicts the partition's construction
    
    We bifurcate on the value of $d$:
    \begin{itemize}
        \item $d = 2$.

        Then the splitting contains two sectors $\Sigma_1$ and $\Sigma_2$. Notice that every inner node of the tree must be contained in either $\overline{\Sigma_1}$ or $\overline{\Sigma_2}$. To see this, notice that every inner node $\mu$ has degree at least three, so if you choose one random leaf node from each connected component of $T \setminus \set{\mu}$, then by the pigeonhole principle there must be some pair of such leaf nodes in the same $\Sigma_i$, so that $\mu \in \overline{\Sigma_i}$.

        Now, since $\overline{\Sigma_1} \cup \overline{\Sigma_2}$ contains every vertex, it must contain all but one of the edges, since if it contained every edge, there would be one edge with each node on a different $\overline{\Sigma_i}$, which is absurd, and if there were two or more such edges there would be more than one connected component (since a node between two such edges would have to be in a different connected component to nodes on the other sides of each edge). 
        
        Create a new binary inner node on this distinguished edge, and attach to it a new node, which we identify with $e$. We shall check that this new tree is isomorphic to $\Omega$ together with the next case after we define it.

        \item $d > 2$

        We claim that every inner node of the tree save exactly one is contained in some $\overline{\Sigma_i}$. If there was no such node, then by the previous argument there would be a single distinguished edge not in any $\Sigma_i$, but then there would only be two connected components. If there were two such nodes, say $\mu$ and $\nu$, Then $T \setminus \set{\mu}$ has at least three connected components, and so it has at least two components which do not contain $\nu$. Pick two random leaf nodes $a,b$ from distinct such components, and analogously pick $c,d$ from distinct connected components of $T \setminus \set{\nu}$ which do not contain $\mu$. Then all of $a,b,c,d$ must come from different $\Sigma_i's$, but the path from from $a$ to $b$ is disjoint from the path from $c$ to $d$ (they are separated by the path from $\mu$ to $\nu$), so that $ab|cd$ contradicting the fact that $\mathcal{C}$ is a splitting.

        Then the union of all $\overline{\Sigma_i}$'s contains the whole tree save one node $x$ and its connecting edges. Increase the degree of this distinguished node by attaching one new node, which we identify with $e$.
    \end{itemize}

    The only thing left to check is that the new tree, augmented with the new node $e$, is indeed isomorphic to the $D$-set $A \cup \set{e}$. This will follow from Theorem \ref{D set extension lemma} by proving that the new $D$-relation fulfills the characterization given in that lemma. 
    
    To see this, let $a,b,c\in A$ and let $\mu$ be $e$'s unique neighbor node. If $a$ and $b$ lie in different sectors of the partition, then the path between them has a segment outside any $\overline{\Sigma_i}$ (since it must cross from one segment to another at some point, and convex hulls of sets of nodes cannot be adjacent), that is to say the path must pass through $\mu$, as does any path from $e$ to any other node, so that $D(ab;ce)$ never holds as required by the lemma. If, on the other hand, $a,b$ are both in some $\Sigma_i$, then the path between them lies within $\overline{\Sigma_i}$ and so cannot pass through $\mu$. If $c \not \in \Sigma_i$ then the path from $c$ to $x$ is not in $\overline{\Sigma_i}$ and so $D(ab;ce)$ does not hold. If $c \in \Sigma_i$ then let $x  \in A \setminus \Sigma_i$; the difference between the path from $c$ to $e$ and the path from $c$ to $x$ is only that the segment from $\mu$ to $e$ changes to a segment from $\mu$ to $x$, and these segments lie outside $\Sigma_i$, so this cannot change whether either of these paths intersect with the path from $a$ to $b$. That is to say $D(ab;ce)$ holds if and only if $D(ab;cx)$ does as per the lemma.

    Thus, by Theorem \ref{D set extension lemma}, the leaves of this new tree must coincide with a $D$-set extension of $A$ by one point $e$ such that $e$ induces $\mathcal{C}$ on $A$, which is precisely what $\Omega$ is, so indeed $\Omega$ is isomorphic to the leaves of this tree, and by induction we are done with existence.

    To see the uniqueness and the one-to-one correspondences, suppose $T$ and $T'$ are two trees without binary edges which both generate $\Omega$ through their leaves. Identify $\Omega$ with the leaves of $T$ and notice that for every internal node $\mu$ of $T$, the partition of $\Omega$ obtained by restricting the connected components of $T \setminus \set{\mu}$ to the leaves, is in fact a splitting of $\Omega$ with at least 3 elements. Furthermore, by the previous argument any such splitting also determines a node with this property, thus there is a natural bijection between splittings with three or more elements and inner nodes of $T$. Similarly, there is a natural bijection between edges of $T$ and splittings of $\Omega$ with two sectors. Repeating this for $T'$ we may obtain a natural bijection between the inner nodes of $T$ and the inner nodes of $T'$ given by mapping the inner node or edge which induces a given splitting $\mathcal{C}$ to the unique node or edge in $T'$ which induces this same splitting (and of course we map $\Omega$ to itself through the identity). We claim that these bijections are an isomorphism.

    To see this, we must characterize adjacency in these trees. Let $\mu$ be node in $T$, $\epsilon$ be an edge in $T$, and $\mathcal{C}_\mu$ and $\mathcal{C}_\epsilon$ be the splittings of $\Omega$ induced by each one (If $\mu$ happens to be a leaf node, set $\mathcal{C}_\mu = \set{\set{\mu}, \Omega \setminus \set{\mu}}$ for the purposes of this argument). We claim that $\mu$ connects to $\epsilon$ if and only if one of the two sectors of $\mathcal{C}_\epsilon$ is a also a sector of $\mathcal{C}_\mu$. The ``only if'' direction is straightforward, since if $\epsilon$ is an edge of $\mu$ then one of the connected components of $T \setminus \set{\mu}$ is the union of $\set{\epsilon}$ with one of the connected components of $T \setminus \set{\epsilon}$. The ``if'' direction now follows, since now we know each of the sectors of $\mathcal{C}_\epsilon$ is also in the splittings induced by $\epsilon$'s two neighbor nodes, so by Fact\ref{basic facts}(3), if $\mathcal{C}_\mu$ contains any sector of $\mathcal{C}_\epsilon$, then $\mu$ must be a node of $\epsilon$.

    It follows from this that node-edge adjacency is preserved by the previous bijection, so it is fact an isomorphism and we are finished.
    
\end{proof}

Notice that the finiteness condition in the previous theorem is necessary. For instance, no proper $D$-set can be the leaves of a (graph-theoretical) tree, since if such a tree $T$ existed, take a random leaf node $a$ and let $x$ be the inner node adjacent to it. Make a structural partition by partitioning the leaves of $T$ according to which connected component of $T \setminus \set{x}$ they lie on. Then $\set{a}$ is a component of this partition, but that is impossible since any node splitting in a proper $D$-set has infinitely many components.

\section{Homogeneity of D-sets with Predicates}

In this section we will characterize ultrahomogeneity in countable colored D-sets.

First, we make precise what we mean by a ``colored $D$-set.''

    \begin{definition}
        A \emph{coloring} of a $D$-set $\Omega$ is a finite set of predicates $P_i, \dots, P_n$ such that the sets $\set{P_1, \dots, P_n}$ form a partition of $\Omega$ into nonempty sets. If $x \in P_i \subseteq \Omega$ we say that $x$ has color $i$. A $D$-set with predicates which fulfill these conditions is called a \emph{colored $D$-set}. We denote the color of an element $x$ by $c(x)$.
    \end{definition}

\begin{lemma}\label{splitting extension lemma}
    Let $\Omega$ be a (not necessarily countable) $D$-set, $A$ be a substructure of $\Omega$ and $\mathcal{C}$ a splitting of $A$, then there exists a splitting $\mathcal{C}'$ of $\Omega$ which extends $\mathcal{C}$ in the sense that every sector of $\mathcal{C}$ is the intersection of a sector of $\mathcal{C}'$ with $A$. Furthermore if $\mathcal{C}$ is a node splitting then $\mathcal{C}'$ is unique, and if $\mathcal{C}$ is an edge splitting then $\mathcal{C}'$ may also be taken to be an edge splitting.
\end{lemma}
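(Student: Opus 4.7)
The plan is to construct $\mathcal{C}'$ explicitly by specifying, for each $x \in \Omega$, which sector of $\mathcal{C}'$ it belongs to, then verifying the splitting axioms of Definition \ref{splitting} on $\Omega$ using the $D$-set axioms. The construction proceeds differently depending on whether $\mathcal{C}$ is a node splitting or an edge splitting.

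For the node splitting case, I would pick representatives $a_i \in \Sigma_i$ for three distinct sectors $\Sigma_1, \Sigma_2, \Sigma_3$ of $\mathcal{C}$. For each $x \in \Omega \setminus A$, axiom (D2) applied to the four-element $D$-set $\{a_1, a_2, a_3, x\}$ ensures at most one of the relations $D(a_j a_k; a_i x)$ (with $\{i,j,k\} = \{1,2,3\}$) holds: if $D(a_j a_k; a_i x)$ holds, place $x$ into the sector $\Sigma_i'$ extending $\Sigma_i$; otherwise mark $x$ as ``new.'' The first essential verification is well-definedness: starting from the splitting axiom for $\mathcal{C}$ on $A$ (which gives $D(a_i a_i'; a_j a_k)$ for $a_i, a_i' \in \Sigma_i$ and $a_j, a_k$ in other sectors), I would combine this with the assumed relation involving $x$ via (D3), (D2), and the Transitivity Lemma to show that replacing $a_i$ by $a_i'$ yields the analogous relation (applying (D3) to $D(a_2 a_3; a_1 x)$ with $v = a_3'$ produces $D(a_2 a_3'; a_1 x) \vee D(a_2 a_3; a_1 a_3')$, and the second disjunct is killed by (D2) versus the splitting relation $D(a_1 a_2; a_3 a_3')$). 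The new elements are then grouped by the equivalence $x \sim y \Leftrightarrow D(xy; cd)$ for some (equivalently, any) $c, d$ in different sectors of $\mathcal{C}$, and each equivalence class becomes a new sector of $\mathcal{C}'$. Verifying that $\mathcal{C}'$ so defined satisfies the splitting axioms is routine casework on where the four test elements lie across the sectors of $\mathcal{C}'$, with the delicate cases being those involving two elements in the same new sector.

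For uniqueness in the node case, any extension has at least $|\mathcal{C}| \geq 3$ sectors and hence is itself a node splitting. Given two extensions $\mathcal{C}_1', \mathcal{C}_2'$ with corresponding sectors $\Sigma_i^{(1)}, \Sigma_i^{(2)}$ extending each $\Sigma_i$, Fact \ref{basic facts}(4) applied to $\Sigma_1^{(1)}, \Sigma_1^{(2)}$ rules out every possibility except $\Sigma_1^{(1)} = \Sigma_1^{(2)}$: they are not disjoint (both contain $\Sigma_1$), and strict containment or the union being $\Omega$ force $\Sigma_j \subseteq \Sigma_k$ for distinct sectors $\Sigma_j, \Sigma_k$ of $\mathcal{C}$ (contradiction, using that $\mathcal{C}$ has $\geq 3$ sectors). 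So $\mathcal{C}_1'$ and $\mathcal{C}_2'$ share a sector and are equal by Fact \ref{basic facts}(3).

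For the edge splitting case $\mathcal{C} = \{\Sigma_1, \Sigma_2\}$, the three-representative construction is unavailable. When $|A| = 2$, Lemma \ref{Sector disjoint from set} applied with the singleton $\{b\}$ and the element $a$ yields a sector $\Sigma$ of some splitting of $\Omega$ with $a \in \Sigma$, $b \notin \Sigma$; the two-sector partition $\{\Sigma, \Omega \setminus \Sigma\}$ is then an edge splitting of $\Omega$ extending $\mathcal{C}$, with the splitting axioms inherited from those of the original splitting containing $\Sigma$ (together with (D1)). For the general edge case I would combine this approach with finite-subset approximations, or adapt the three-representative idea by using a second element within one sector in place of the missing third representative (assuming some $|\Sigma_i| \geq 2$). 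The main obstacle is the edge case itself: unlike the node case, the extension is inherently non-unique, and the construction must consistently avoid introducing new sectors so that the resulting partition remains a two-sector edge splitting rather than a finer node-splitting extension. The key content of the furthermore clause is that we can always slice $\Omega$ with a single cut rather than introducing new branching, and bundling the pathological $x$'s onto one of the existing two sides (using some compatibility witness from $A$) is what makes the choice globally coherent.
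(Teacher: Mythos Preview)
Your node-splitting construction is essentially the same as the paper's: the paper phrases it as an element-by-element induction using a notion of ``suitable sector'' (a sector $\Sigma$ is suitable for $x$ if $ax|bc$ for all $a \in \Sigma$, $b,c \notin \Sigma$), but the content is the same as your three-representative criterion, and the verification that suitability with respect to one triple propagates to all triples is exactly your well-definedness check. Your uniqueness sketch via Fact~\ref{basic facts}(4) is a slightly different route from the paper's (which argues directly that $x \in \Sigma'$ forces $x \in \Sigma''$ by exhibiting the $D$-relation $xa|bc$ with $a,b,c$ in three different $\mathcal{C}$-sectors), but be warned that your dismissal of the strict-containment case is not justified as written: $\Sigma_1^{(1)} \subsetneq \Sigma_1^{(2)}$ does not obviously force any containment among the $\Sigma_j$'s, since the extra element could lie in a new sector of $\mathcal{C}_1'$ outside $A$ entirely. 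The paper's direct argument avoids this.

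The real gap is the edge-splitting case. You correctly identify the difficulty --- that one must place every $x$ into one of the two existing sides without creating a third sector --- but you do not supply the mechanism. ``Finite-subset approximations'' and ``adapt the three-representative idea'' are not arguments, and your $|A|=2$ trick via Lemma~\ref{Sector disjoint from set} does not scale. The paper's key step is to prove that for an edge splitting $\{\Sigma_1,\Sigma_2\}$, \emph{at least one} of $\Sigma_1, \Sigma_2$ is always suitable for any given $x$. This is not automatic: assuming neither is suitable yields six witnesses $a,b',c' \in \Sigma_1$ and $a',b,c \in \Sigma_2$ with $\neg D(ax;bc)$ and $\neg D(a'x;b'c')$, and one derives a contradiction by chaining (D3) and transitivity through the relations $ab'|bc$, $ac'|bc$, and $b'c'|a'c$. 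Once you know some sector is always suitable, you simply always place $x$ into a suitable sector, and the result stays a two-sector splitting. Without this lemma your edge case is incomplete.
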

\begin{proof}
    We shall inductively construct $\mathcal{C}'$ element-by-element. Fix a $\kappa$-enumeration of $\Omega$, where $\kappa = |\Omega|$, and let $x$ be the first element of $\Omega \setminus \bigcup \mathcal{C}$. We shall either add $x$ to an existing sector of $\mathcal{C}$ or add $\set{x}$ as a new sector, according to the following rule: Designate as sector $\Sigma$ of $\mathcal{C}$ as \textit{suitable for $x$} (with respect to the splitting) if whenever $a \in \Sigma$ and $b,c \in A \setminus \Sigma$ we have $ax|bc$.
    \begin{itemize}
        \item If there exists a unique sector $\Sigma$ suitable for $x$, then add $x$ to the sector $\Sigma$.

        \item If no such sector exists add $\set{x}$ to $\mathcal{C}$ as a new sector. If this is the case (as we shall prove later) $\mathcal{C}$ is necessarily a node splitting.

        \item If there exist more than one sector suitable for $x$, then (as we shall prove later) $\mathcal{C}$ must be an edge splitting, and we may freely choose to add $x$ to either sector of $\mathcal{C}$, or add $\set{x}$ as a new sector.
    \end{itemize}

    We must check that this procedure always yields a splitting. We shall proceed by cases.

    \vspace{3pt}

    \textbf{Case 1:} $\mathcal{C}$ is a node splitting.

    \vspace{3pt}
    
    First, we shall note that that in this case, $\Sigma$ being suitable for $x$ is equivalent to $ax|bc$ holding for \textit{some} $a \in \Sigma, b,c  \in A \setminus\Sigma$ with $b,c$ not in the same sector. The ``only if'' direction is clear. For the ``if'' direction, suppose that $a\in \Sigma$ and $b, c$ in different sectors of $\mathcal{C} \setminus \set{\Sigma}$ are such that $ax|bc$ and let $a' \in \Sigma$ be arbitrary, then $aa'|bc$ and so by transitivity $a'x|bc$. Now if $b'$ is in the same sector as $b$ then
    \begin{equation*}
        \begin{split}
            xa'|bc &\Rightarrow b'a'|bc \vee xa'|b'c \quad \textrm{(former false since $bb'|ac$ instead)}\\
            &\Rightarrow xa'|b'c. \quad (*)
        \end{split}
    \end{equation*}
    and similarly
    \begin{equation*}
        \begin{split}
            xa'|bc &\Rightarrow b'a'|bc \vee xa'|bb' \quad \textrm{(former false since $bb'|ac$ instead)}\\
            &\Rightarrow xa'|bb'. \quad (**)
        \end{split}
    \end{equation*}
    Lastly, if $d$ is in a different sector from $a$, $b$ and $c$ then
    \begin{equation*}
        \begin{split}
            xa'|bc &\Rightarrow da'|bc \vee xa'|dc \quad \textrm{(former false as $\set{a',b,c,d}$ are in different sectors)}\\
            &\Rightarrow xa'|dc. \quad (***)
        \end{split}
    \end{equation*}
    By applying $(*)$, $(**)$ and $(***)$ repeatedly, we may replace $b$ and $c$ with any elements not in $\Sigma$, so that $xa'|b'c'$ for any $a' \in \Sigma$ and $b', c' \in A\setminus \Sigma$ and $\Sigma$ is indeed suitable for $x$.

    Now, let us see that the third case in the construction is impossible when $\mathcal{C}$ is a node splitting. Suppose there are two sectors $\Sigma_1, \Sigma_2$ both suitable for $x$, and let $\Sigma_3$ be a third sector. Take arbitrary elements $a \in \Sigma_1$, $b \in \Sigma_2$, $c\in \Sigma_3$. Since $\Sigma_1$ is suitable for $x$ we have $ax|bc$ and since $\Sigma_2$ is suitable for $x$ we have $bx|ac$, but $ax|bc$ and $bx|ac$ cannot be simultaneously true. We conclude that if a suitable sector exists, it is unique and so the third case in the construction is impossible.

    Notice that when $x$ is added to a suitable sector $\Sigma$ then the first condition in Definition \ref{splitting} is automatic. To check the second condition let $a,b,c \not \in \Sigma$ be in three different sectors of $\mathcal{C} \setminus \Sigma$. If some $D$-relation holds on $\set{x,a,b,c}$, say $xa|bc$, then by the equivalence at the start of this case the sector which contains $a$ is suitable for $x$, which contradicts the uniqueness of $\Sigma$, so no such $D$-relation may hold. We conclude that if $x$ is added to $\Sigma$, the result is a splitting.
    
    When $\set{x}$ is added as a new sector, checking the first condition in Definition \ref{splitting} reduces to checking that $aa'|bx$ for any $a, a',b$ such that $a$ and $a'$ share a sector of $\mathcal{C}$ and $b$ is in some other sector. To show this, pick $c$ in a third sector distinct from the sectors of $a$ and $b$, and note that by (D3),
    \begin{equation*}
            aa'|bc \Rightarrow ax|bc \vee aa'|bx .
    \end{equation*}
    But $ax|bc$ contradicts the assumption that no suitable sector exists, by the equivalence at the start of this section, so we conclude that $aa'|bx$, as we wanted.

    Lastly, we must check that the second condition in Definition \ref{splitting} holds when $\set{x}$ is added as a new sector. Just as when $x$ was added to an existing sector, if $a, b, c$ belong to different sectors of $\mathcal{C}$ and some $D$-relation holds between them, such as $ax|bc$, then the sector containing $a$ would be suitable for $x$, a contradiction. We conclude that when $\mathcal{C}$ is a node splitting, this procedure always yields a splitting.

    \vspace{3pt}

    \textbf{Case 2:} $\mathcal{C} = \set{\Sigma_1, \Sigma_2}$ is an edge splitting.

    \vspace{3pt}

    First we shall check that the second case in the construction is impossible, i.e. there is always a sector suitable for $x$. Suppose, to the contrary, that neither $\Sigma_1$ nor $\Sigma_2$ is suitable for $x$. Then this is witnessed by elements $a, b', c' \in \Sigma_1$ and $a', b, c \in \Sigma_2$ such that

    \begin{equation}
        \neg D(a, x; b, c)
    \end{equation}

    and

    \begin{equation}
        \neg D(a', x; b', c').
    \end{equation}

    Let $A_0 = \{a, a', b, b', c, c'\}$ and note that $\{\Sigma_1 \cap A_0, \Sigma_2 \cap A_0 \}$ is a splitting of the substructure $A_0$. Hence $a b' | bc$ since they are in different sectors, and by (D3), $(ax | bc) \vee (a b' | xc)$; but the former contradicts (1), and so

    \begin{equation}
        ab' | x c.
    \end{equation}
    
    Similarly, we have $a c' | bc$, hence $(ax | bc) \vee (ac' | xc)$, and the former is again ruled out by (1), so that

    \begin{equation}
        a c' | xc.
    \end{equation}

    By (3), (4), and transitivity, we conclude that $b' c' | xc$. But we also have that $b'c' | a' c$, so by transitivity again, $b' c' | a' x$. This contradicts (2), and so one of the two sectors in $\mathcal{C}$ must be suitable for $x$.

    Now let us check that the procedure described indeed yields a splitting. Since $\mathcal{C}$ is an edge splitting, the second condition for splittings becomes vacuous regardless of whether $x$ is added to an existing sector or as a new sector, so we need only check the first condition, which is automatic when $x$ is added to an existing sector, so that we need only check it when $\set{x}$ is added as a new sector. Again, this reduces to checking that $aa'|xb$ for any $a, a',b$ such that $a$ and $a'$ share a sector of $\mathcal{C}$ and $b$ is in the other sector, but this follows from the fact that both sectors of $\mathcal{C}$ are suitable for $x$ since this is the only way that $\set{x}$ can be added as a new sector. We conclude that when $\mathcal{C}$ is an edge splitting, this construction always yields a splitting

    \vspace{3pt}
    
    Repeat this construction inductively in the natural way to extend $\mathcal{C}$ to a full splitting $\mathcal{C}'$ of $\Omega$. Notice that if $\mathcal{C} = \set{\Sigma_1, \Sigma_2}$ is an edge splitting, then for any $a,a' \in \Sigma_1, b,b' \in \Sigma_2$ we have $aa'|bb'$ and so by (D3) either $ax|bb'$ or $aa'|bx$ (or both). And by an argument analogous to the previous, if $ax|bb'$ for this particular choice of $a,b,b'$, then it holds for any choice of $a \in \Sigma_1, b,b' \in \Sigma_2$, and analogously for $aa'|bx$. Then if $\mathcal{C}$ is an edge splitting, the second case in the construction can never occur, and so we may always choose to add $x$ to an existing sector and so $\mathcal{C}'$ can always be chosen to be an edge splitting.

    To finish the proof, we must check the uniqueness of $\mathcal{C}'$ when $\mathcal{C}$ is a node splitting. Let $\mathcal{C}'$ and $\mathcal{C}''$ be two extensions of a node splitting $\mathcal{C}$. Fix a $\Sigma\in \mathcal{C}$ and let $\Sigma' \in \mathcal{C}', \Sigma'' \in \mathcal{C}''$ be such that $\Sigma = \Sigma' \cap A = \Sigma'' \cap A$. If $x \in \Sigma' \setminus \Sigma$, then pick an $a \in \Sigma$ and $b,c \in A \setminus \Sigma$ such that $b,c$ are in different sectors (for this we use $|\mathcal{C}| \geq 3$), then we have that $xa|bc$ and $a,b,c$ are in different $\mathcal{C}$-sectors, and because $\mathcal{C}''$ extends $\mathcal{C}$ then $a,b,c$ are also in different $\mathcal{C}''$-sectors, so then $x$ must be in the same sector of $\mathcal{C}''$ as $a$. That is to say $x \in \Sigma''$. Thus $\Sigma' = \Sigma''$, but then by Fact \ref{basic facts}(3) $\mathcal{C}' = \mathcal{C}''$ and we are done.

\end{proof}

\begin{lemma}\label{type definability lemma}
    Suppose $A \cup \set{e}$ is a finite subset of a $D$-set with $e \not\in A$. Then there is a subset $A_0 \subseteq A$ such that $|A_0| \leq 3$ and $\qftp(e / A)$ is definable over $A_0$, where $\qftp$ stands for the quantifier-free type. 
    
    Moreover, when the splitting of $A$ induced by $e$ is a node splitting then $A_0$ can be any set of three elements from distinct sectors, and when it is an edge splitting then $A_0$ can be any set containing a complementary pair from one sector and any element from the other sector.
    
    In case the theory eliminates quantifiers, we can replace $\qftp$ with $\tp$.
\end{lemma}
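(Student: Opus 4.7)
The plan is to leverage Theorem~\ref{D set extension lemma}, which reduces the computation of $\qftp(e/A)$ to knowing the splitting $\mathcal{C}$ of $A$ induced by $e$ (Lemma~\ref{induced splitting}): each atomic formula $D(y_1 y_2; y_3 e)$ holds precisely when $y_1, y_2$ share a sector $\Sigma$ of $\mathcal{C}$ and either $y_3 \notin \Sigma$ or $D(y_1 y_2; y_3 x)$ holds for some (equivalently, any) $x \in A \setminus \Sigma$. Consequently, to produce a defining scheme for $\qftp(e/A)$ over $A_0$ it suffices to choose $A_0$ so that (a) the ``same sector of $\mathcal{C}$'' relation on $A$ is $A_0$-definable by an $L$-formula without $e$, and (b) for every sector $\Sigma$ of $\mathcal{C}$ the set $A_0 \setminus \Sigma$ is nonempty, so that the witness $x$ can always be drawn from $A_0$.

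For the node-splitting case I would take $A_0 = \{a_1,a_2,a_3\}$ with the $a_i$ in three distinct sectors $\Sigma_1, \Sigma_2, \Sigma_3$. A direct verification using the splitting conditions together with (D1)--(D4) yields $y \in \Sigma_i \Leftrightarrow D(a_i y; a_j a_k)$ (with $\{i,j,k\}=\{1,2,3\}$) for every $y \in A$; and for $y,z \in A$ both outside $\Sigma_1 \cup \Sigma_2 \cup \Sigma_3$ one reads same-sectorhood off from $D(yz; a_1 a_2)$, using that four elements in four distinct sectors can support no $D$-relation. Condition (b) holds automatically since $A_0$ meets three different sectors.

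For the edge-splitting case with sectors $\Sigma_1, \Sigma_2$, I would invoke Lemma~\ref{BAF lemma 2} to pick a complementary pair $a,b \in \Sigma_1$, take any $c \in \Sigma_2$, and set $A_0 = \{a,b,c\}$. The central identity
\[
y \in \Sigma_2 \iff D(ab; cy) \qquad (\text{for all } y \in A)
\]
is then verified by case analysis on $y$: the splitting property handles $y \in \Sigma_2 \setminus \{c\}$, complementarity handles $y \in \Sigma_1 \setminus \{a,b\}$, axiom (D4) handles $y = c$, and the derived identity $\neg D(ab;ac)$ for distinct $a,b,c$ handles $y \in \{a,b\}$. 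I expect this case to be the main obstacle, because without a third sector all partition information must be extracted from a single complementary pair, so the choice of $A_0$ and the verification of the above identity require more care than in the node-splitting case.

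Once (a) and (b) are established in either case, Theorem~\ref{D set extension lemma} lets us rewrite each $D(y_1 y_2; y_3 e)$ as an explicit Boolean combination of $D$-atomic formulas in $y_1, y_2, y_3$ whose remaining parameters all lie in $A_0$, yielding the claim for $\qftp$. The final sentence is immediate: under quantifier elimination $\tp$ and $\qftp$ coincide on every tuple, so the same defining scheme witnesses that $\tp(e/A)$ is definable over $A_0$.
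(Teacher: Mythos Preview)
Your proposal is correct and follows essentially the same route as the paper: both arguments hinge on showing that the ``same sector of $\mathcal{C}$'' relation is definable over the chosen $A_0$, and then reducing $D(y_1y_2;y_3e)$ to a formula in the $y_i$ with a witness $x \in A_0 \setminus \Sigma$. The only cosmetic differences are that the paper writes the node-splitting same-sector test as the single disjunction $xy|ab \vee xy|ac \vee xy|bc$ rather than your two-stage test, and organizes the final case split by how $y_1,y_2,y_3$ share sectors rather than by citing Theorem~\ref{D set extension lemma} directly.
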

\begin{proof}
    It suffices to find a definition for $D$-relations of the form $D(xy_1; y_2y_3)$.
    
    Let $\mathcal{C}$ be the splitting of $A$ induced by $e$. If $\mathcal{C}$ is a node splitting then let $a,b,c$ be elements in three distinct sectors of $\mathcal{C}$ and take $A_0 = \set{a, b, c}$. Notice that two elements $x,y \in A$ being in the same sector of $\mathcal{C}$ is a relation definable over $A_0$, by the formula
    $$xy|ab \vee xy|ac \vee xy |bc.$$
    On the other hand, when $\mathcal{C}$ is an edge splitting, if we take a pair $(b_1, b_2)$ of complementary elements from a sector of $\mathcal{C}$ and an $a$ outside that sector, and take $A_0 = \set{a, b_1, b_2}$, then which sector of $\mathcal{C}$ a given element belongs to is definable over $A_0$, since an $x$ belongs to the sector containing $a$ if and only if $D(b_1b_2, ax)$.
    
    We conclude that two elements sharing a sector of $\mathcal{C}$ is always $A_0$-definable for an $A_0$ of size at most $3$. Hence, to determine when $D(xy_1, y_2y_3) \in \qftp(e/A)$ with a formula in $y_1, y_2, y_3$, we may split into cases depending on how $y_1, y_2,$ and $y_3$ share sectors of $\mathcal{C}$.
    \begin{itemize}
        \item Case 1: $y_1$ does not share a sector of $\mathcal{C}$ with $y_2$ nor $y_3$:

        Then $D(e y_1; y_2 y_3)$ holds if and only if $y_2$ and $y_3$ share a sector, which is definable over $A_0$.

        \item Case 2: $y_1$ shares a sector with exactly one of $y_2$ or $y_3$:

        Then $D(e y_1; y_2y_3)$ never holds.

        \item Case 3: $y_1, y_2$ and $y_3$ are all in the same sector.

        Then $D(e y_1; y_2y_3)$ holds if and only if $D(x y_1; y_2y_3)$ for any $x$ outside their shared sector, and one of $a,b,c$ can always (definably) take the place of $x$ for this purpose.
    \end{itemize}
    Therefore $\qftp(e/A)$ is $A_0$-definable.

\end{proof}

\begin{remark}\label{spltting is determined by qftp}
    If $A \cup \set{e}$ is a $D$-set with $e \not\in A$, possibly a substructure of a larger $D$-set $\Omega$, then the splitting $\mathcal{C}$ of $A$ induced by $e$ is uniquely determined by $\qftp(e/A)$.
\end{remark}
\begin{proof}
    Let $e'$ have the same quantifier-free type over $A$ as $e$, and let  $a,b, c \in A$. If $a,b$ share a sector of $\mathcal{C}$ and $c$ is in a different sector, then $ab|ce'$ and if $a,b$ do not share a sector then $\lnot ab|ce'$. This proves $e'$ also induces $\mathcal{C}$.

    On the other hand, if $e'$ induces the same splitting as $e$, then $id|_A \cup \set{e,e'}$ is a (possibly partial) isomorphism that fixes $A$, so that $\qftp(e'/A) = \qftp(e/A)$.
\end{proof}

Now we will prove Theorem 1.

    \begin{proof}
        For the ``if'' direction, suppose $\Omega$ is a countable dense proper regular $D$-set. Let $A, B$ be two finite substructures of $\Omega$ and let $\phi:A \rightarrow B$ an automorphism between them. We shall extend $\phi$ to a full automorphism of $\Omega$ in a standard back-and-forth fashion. We shall describe only the first step of the construction, for all subsequent steps are analogous.

    Let $x$ be the first element of $\Omega$ (according to some fixed enumeration) which is not in $A$. Suppose that $x \in P_i$ and let $\mathcal{C}$ be the splitting of $A$ induced by the element $x$. Let $\mathcal{D}$ be the partition of $B$ which consists of the images of each sector of $\mathcal{C}$ under $\phi$. Clearly $\mathcal{D}$ is a splitting of $B$. We must split into cases based on the number of sectors in $\mathcal{C}$.

    \begin{itemize}
        \item Case 1: $\mathcal{C}$ is an edge splitting.

        Write $\mathcal{C} = \{C_1, C_2\}$ and $\mathcal{D} = \{D_1, D_2\}$. Let $a,b$ be a pair of complementary elements of $C_1$ and $c,d$ be a pair of complementary elements in $C_2$, as in Lemma \ref{BAF lemma 2}. Since $\phi$ is an isomorphism, $\phi(a)$ and $\phi(b)$ are complementary in $D_1$ and $\phi(c)$ and $\phi(d)$ are complementary in $D_2$. Since $\phi(a)\phi(b)|\phi(c)\phi(d)$, by density there exists a $y_0 \in \Omega$ such that $y_0 \phi(b)|\phi(c)\phi(d)$, $\phi(a)y_0|\phi(c)\phi(d)$, $\phi(a)\phi(b)|y_0\phi(d)$ and $\phi(a)\phi(b)|\phi(c)y_0$. Note that $\set{\set{\phi(a),\phi(b)}, \set{\phi(c),\phi(d)}, \set{y_0}}$ is a splitting of $\set{\phi(a),\phi(b),\phi(c),\phi(d),y_0}$, and so by Theorem \ref{D set extension lemma} this extends to a node splitting $\mathcal{D}'$ of all $\Omega$. Let $\Sigma_0 \in \mathcal{D}'$ be the sector which contains $y_0$, then by hypothesis there is some $y \in \Sigma_0 \cap P_i$. Since $y$ is in the same $\mathcal{D}'$-sector as $y_0$, we have $y\phi(b)|\phi(c)\phi(d)$, $\phi(a)y|\phi(c)\phi(d)$, $\phi(a)\phi(b)|y\phi(d)$ and $\phi(a)\phi(b)|\phi(c)y$.
        
        By Lemma \ref{type definability lemma} and Remark \ref{spltting is determined by qftp}, $y$ induces the splitting $\mathcal{D}$ on $B$, and because $x$ induces $\mathcal{C}$ on $A$ it is clear that $\phi \cup \set{(x,y)}$ is an isomorphism, as desired.

        \item Case 2: $\mathcal{C}$ is a node splitting.

        Let $d = |\mathcal{C}| = |\mathcal{D}|$. Because $x$ induces the splitting $\mathcal{C}$, we have that $\mathcal{C} \cup \set{\{x\}}$ is in itself a splitting of $A \cup \set{x}$. By Lemma \ref{splitting extension lemma} $\mathcal{C} \cup \set{\{x\}}$ extends to a full splitting $\mathcal{C}'$ of $\Omega$ with at least $d+1$ sectors. Now let $\mathcal{D}'$ be the extension of $\mathcal{D}$ to $\Omega$, again as in Lemma \ref{splitting extension lemma}. Then by regularity $|\mathcal{C}'| = |\mathcal{D}'| > d$ and so there exists a sector $D \in \mathcal{D}'$ such that $D$ is disjoint from every sector of $\mathcal{D}$. By hypothesis, $D \cap P_i \neq \emptyset$ so take $y \in D \cap P_i$. 
        
        As before, by Lemma \ref{type definability lemma} and Remark \ref{spltting is determined by qftp}, $y$ induces $\mathcal{D}$ and so $\phi \cup \set{(x,y)}$ is an isomorphism, as desired.
    \end{itemize}
    By continuing this process inductively, switching the roles of $A$ and $B$ at every step, we may extend $\phi$ to a full automorphism of $\Omega$, and so $\Omega$ is ultrahomogeneous.
    
    Now for the ``only if'' direction. Suppose first that there is a $P_i$ and an infinite sector $\Sigma$ such that $P_i \cap \Sigma = \emptyset$. Take an arbitrary $b_0 \in P_i$ and $a_1, a_2 \in \Sigma$ of the same color, which exist by the pigeonhole principle. By propriety there exists an $a_3$ such that $b_0a_1|a_2a_3$.

    Consider the partial isomorphism $a_1 \mapsto a_2, a_2 \mapsto a_1, a_3 \mapsto a_3$. This is a partial isomorphism since $a_1, a_2$ were chosen of the same color. We claim this cannot be extended to a full automorphism as $b_0$ cannot have an image. If it did have an image $c_0$, then $P_i(c_0)$ and $c_0a_2|a_1a_3$ hold. Then by (D3) $b_0a_2|a_1a_3$ or $c_0a_2|a_1b_0$, but the former is impossible since $b_0a_1|a_2a_3$ holds instead and the latter too since $b_0c_0|a_1a_2$ holds instead (since $a_1,a_2 \in \Sigma$, $b_0\not\in \Sigma$, and $c_0 \notin \Sigma$ since $P_i(c_0)$ holds). Thus the partial isomorphism cannot be extended.

    Thus we may assume that $P_i \cap \Sigma$ is nonempty for every sector $\Sigma$ and every color $i$, and now suppose that $\Omega$ is not dense. Thus there are $a,b,c,d$ such that $ab|cd$ but there is no $e$ such that $ab|ed$ and $ae|cd$. Notice that $a,b,c,d$ must be all distinct, since otherwise propriety implies the existence of such an $e$. Extend the splitting $\set{\set{b},\set{c},\set{d}}$ to a node splitting of $\Omega$, and let $x$ be an arbitrary element of the sector to which $d$ belongs such that $x$ and $c$ have the same color. Then $bc|dx$, which implies that $\lnot(xb|cd)$ so that $x \neq a$. Thus by applying (D3) to $(x,b,c,d)$ we may also obtain that $ab|cx$, and by applying (D3) to $(b,c,x,d)$ we may deduce that $ac|xd$. 
    
    Now, notice that the map $a \mapsto a, b\mapsto b, d\mapsto d$ and $x\mapsto c$ is a partial isomorphism since it preserves the only nontrivial $D$-relation among these points and since $x$ and $c$ have the same color. However, this map cannot be extended to a full automorphism since for $(a,b,x,d)$ we have that $c$ fulfills $ab|cd$ and $ac|xd$ but by construction no possible image of $c$ could exist which fulfills this property on $(a,b,c,d)$.

    Lastly, suppose towards a contradiction that $\Omega$ is not regular, and let $\Sigma_1, \Sigma_2$ be two node splittings with a different number of sectors (say $|\Sigma_1| > |\Sigma_2| = n$). Take a tuple of elements $(a_1, \cdots, a_n)$ such that each is from a different sector of $\Sigma_2$ and each belongs to $P_1$. Also take $n+1$ elements $(b_1, \cdots, b_n, b_{n+1})$ from $n+1$ different sectors in $\Sigma_1$ and which all belong to $P_1$. The map $b_i \mapsto a_i$ is a partial isomorphism, but it cannot be extended to a full automorphism since if $a_{n+1}$ was the image of $b_{n+1}$ under such an automorphism, it would have to be in some sector of $\mathcal{C}$, say the one that contains $a_i$, and thus $a_{n+1}a_i|a_ja_k$ for any $j, k \leq n$ distinct from each other and $i$, but this contradicts the fact that $b_{n+1}b_i |b_jb_k$ does not hold.
        
    \end{proof}

    There is a weaker notion of homogeneity which is nevertheless interesting to this structure:

    \begin{definition}
        A structure $\Omega$ with a predicate $P$ which is not equal to $\emptyset$ nor $\Omega$ is called $P$-homogeneous if whenever $A, B$ are finitely generated substructures such that $A \cap P, A \setminus P, B \cap P$ and  $B \setminus P$ are all nonempty and $\phi: A \rightarrow B$ is a partial isomorphism between them, then $\phi$ may be extended to an automorphism of $\Omega$.
    \end{definition}

\begin{lemma}\label{true edge witness property}
    Let $\set{C_1, C_2}$ be an edge splitting of a $D$-set $\Omega$, then $\set{C_1, C_2}$ is a true edge splitting if and only if for all $a,b \in C_1$, $c,d \in C_2$, there exists witnesses of density $y_1, y_2$ for $(a,b,c,d)$ such that $y \in C_1$ and $y_2 \in C_2$.
\end{lemma}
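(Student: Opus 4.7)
My plan is to prove both directions by contrapositive, leveraging the characterization of true edge splittings as those that admit no refinement to a node splitting.

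For the ``$\Leftarrow$'' direction, suppose $\{C_1,C_2\}$ is not a true edge splitting. If $C_1$ or $C_2$ is a singleton, say $C_1 = \{a_0\}$, then no element of $C_1$ can serve as a density witness distinct from the four fixed points, so the existence condition fails trivially. Otherwise a refinement to a node splitting $\mathcal{N}$ exists; by the earlier refinement lemma, one of $C_1, C_2$ is already a sector of $\mathcal{N}$, so WLOG $C_2 \in \mathcal{N}$ and $\mathcal{N}$ cuts $C_1$ into sectors $\Sigma_1, \Sigma_2, \ldots$. Pick $a \in \Sigma_1, b \in \Sigma_2$ and distinct $c, d \in C_2$, and check that no $y_1 \in C_1$ can be a density witness: if $y_1 \in \Sigma_1$, clause (1) of $\mathcal{N}$ gives $D(ay_1; bd)$ and (D2) then yields $\neg D(ab; y_1 d)$; if $y_1 \in \Sigma_2$ the same contradiction arises symmetrically; if $y_1 \in \Sigma_i$ for $i \geq 3$, then $a, b, y_1, c$ occupy four distinct $\mathcal{N}$-sectors, so clause (2) forces $\neg D(ab; y_1 c)$. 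In every case one of the four density-witness conditions fails.

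For the ``$\Rightarrow$'' direction I also argue by contrapositive. Fix distinct $a, b \in C_1$ and $c, d \in C_2$ and assume no $y \in C_1$ is a density witness for $(a, b, c, d)$; I construct a node splitting of $\Omega$ that properly refines $\{C_1, C_2\}$. For each $y \in C_1 \setminus \{a, b\}$, Lemma~\ref{induced splitting} assigns a unique induced splitting on $\{a, b, c, d\}$. Because the restriction of $\{C_1, C_2\}$ to the 5-element set $\{a, b, c, d, y\}$ equals $\{\{a, b, y\}, \{c, d\}\}$ and must itself be a splitting of this finite $D$-set, Theorem~\ref{D-sets are trees} constrains the induced splitting to be one of $\{\{a, b\}, \{c, d\}\}$, $\{\{a\}, \{b\}, \{c, d\}\}$, $\{\{a\}, \{b, c, d\}\}$, or $\{\{b\}, \{a, c, d\}\}$ (corresponding to the four tree positions for $y$ on the $C_1$ side of the edge realizing the restriction). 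The first is exactly the density-witness case, which is ruled out by assumption, so each $y$ falls into one of three classes $\Sigma_a$, $\Sigma_b$, $\Sigma_m$ according to whether it induces splitting (d), (e), or (a); by convention adjoin $a$ to $\Sigma_a$ and $b$ to $\Sigma_b$.

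I further partition $\Sigma_m$ by the relation $y \sim y'$ iff $y = y'$ or $D(yy'; ac)$, which is reflexive by construction, symmetric by (D1), and transitive by applying the Transitivity Lemma to $D(ac; yy')$ and $D(ac; y'y'')$. The proposed refinement is $\mathcal{N} = \{C_2, \Sigma_a, \Sigma_b\} \cup \{\text{$\sim$-classes of } \Sigma_m\}$; it has at least three nonempty sectors (as $a \in \Sigma_a$, $b \in \Sigma_b$, $C_2 \neq \emptyset$) and clearly refines $\{C_1, C_2\}$, giving the desired contradiction once $\mathcal{N}$ is verified to be a splitting. The main obstacle is this verification of Definition~\ref{splitting}: it requires case analysis on which sector each of $u, v, w, z$ belongs to, using the characterizing $D$-relations of each class (for instance, $y \in \Sigma_a \setminus \{a\}$ forces $D(ay; bc) \wedge D(ay; bd)$ as a consequence of inducing $\{\{a\}, \{b, c, d\}\}$, and $y \sim y'$ in $\Sigma_m$ gives $D(yy'; ac)$ together with the parallel relations from inducing $\{\{a\}, \{b\}, \{c, d\}\}$), combined with (D3), the Transitivity Lemma, and the edge-splitting property of $\{C_1, C_2\}$ itself. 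The density witness $y_2 \in C_2$ is obtained by the symmetric argument with the roles of $\{a, b, C_1\}$ and $\{c, d, C_2\}$ interchanged.
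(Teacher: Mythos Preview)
Your ``$\Leftarrow$'' direction is correct and essentially parallel to the paper's argument for that implication, though you work directly with the assumed refinement rather than invoking the One Sector Lemma.

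For the ``$\Rightarrow$'' direction, however, you have taken a much harder road than necessary and left the crucial step unfinished. The paper does not argue by contrapositive here: it simply extends the splitting $\{\{a\},\{b\},\{c,d\}\}$ of the four-element set to a node splitting $\mathcal{D}$ of all of $\Omega$ via Lemma~\ref{splitting extension lemma}, applies the One Sector Lemma to see that all sectors of $\mathcal{D}$ except the one $D_0$ containing $c,d$ lie inside $C_1$, observes that $D_0\supsetneq C_2$ (since equality would make $\mathcal{D}$ a refinement of $\{C_1,C_2\}$, contradicting ``true edge''), and then checks that any $y_1\in D_0\cap C_1$ is a density witness. This is three lines. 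Your approach instead attempts to build the refining node splitting $\mathcal{N}$ explicitly and then verify Definition~\ref{splitting} by brute-force case analysis on which of $\Sigma_a,\Sigma_b,C_2$, or the $\sim$-classes the four points $u,v,w,z$ occupy. You acknowledge this verification is ``the main obstacle'' and do not carry it out; as stated, this is a genuine gap, since the defining relations of your classes only involve the anchors $a,b,c,d$, and passing from those to $D(uv;wz)$ for arbitrary $u,v,w,z$ requires real work with (D3) and transitivity in every sub-case.

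It is worth noting that your partition $\mathcal{N}$ is in fact \emph{equal} to the paper's $\mathcal{D}$: under the hypothesis that no $y\in C_1$ is a density witness, the $cd$-sector of $\mathcal{D}$ must coincide with $C_2$, your $\Sigma_a$ and $\Sigma_b$ are exactly the $a$- and $b$-sectors of $\mathcal{D}$, and your $\sim$-classes on $\Sigma_m$ are precisely the remaining $\mathcal{D}$-sectors. So your case analysis, if completed, would amount to re-proving the splitting extension lemma in this special situation. Replacing your hand-built $\mathcal{N}$ with an appeal to Lemma~\ref{splitting extension lemma} would close the gap immediately.
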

\begin{proof}
    Suppose $\set{C_1, C_2}$ is a true edge splitting. Notice that $\set{\set{a}, \set{b}, \set{c, d}}$ is a node splitting of $\set{a,b,c,d}$. Extend it to a node splitting $\mathcal{D}$ of all $\Omega$. By the One Sector Lemma (Lemma \ref{the one sector lemma}) all but one of the sectors of $\mathcal{D}$ must be contained in $C_1$ or $C_2$ and the exception contains the remaining one, but two sectors of $\mathcal{C}$ contain elements of $C_1$, so it must be that all but one of the sectors of $\mathcal{D}$ are contained specifically in $C_1$, and the exception is the sector $D_0 \in \mathcal{D}''$ containing $c$ and $d$, so then $D_0 \supseteq C_2$, and this containment must be strict since otherwise $\mathcal{D}$ would refine $\set{C_1, C_2}$ contradicting the fact that it is a true node splitting. Let $y_1$ be an element of $C_1 \cap D_0$. Notice that $yb|cd$, $ay|cd$, $ab|yd$ and $ab|cy$ using the fact that $\mathcal{D}$ and $\set{C_1, \lnot C_2}$ are both splittings as needed. That is to say $y_1$ is  a witness of density for $(a, b, c, d)$ which is in $C_1$, as we required. The proof for $y_2$ is analogous.

    Now suppose for some $a,b \in C_1, c,d \in C_2$, $y_1$ does not exist, that is al witnesses of density for $(a,b,c,d)$, if they exist, lie in $C_2$. Extend $\set{\set{a,b}, \set{c}, \set{d}}$ to a node splitting $\mathcal{D}'$ of all $\Omega$. Again by the One Sector Lemma all but one of the sectors of $\mathcal{D}$ must be contained in $C_2$. Let $D_0 \in \mathcal{D}'$ be the sector containing $\set{a,b}$, then $D_0$ is the one sector not contained in $C_2$, and so $D_0 \supseteq C_1$, but notice that if $x \in C_1 \setminus D_0$ then $x$ would be a witness of density for $(a,b,c,d)$ and so such $x$ cannot exist and $D_0 = C_1$. Thus $\mathcal{D}'$ refines $\set{C_1, C_2}$ and $\set{C_1, C_2}$ is not a true edge splitting.
\end{proof}

\begin{theorem}
        Let $\Omega$ be a \textbf{proper} countable $D$-set with a unary predicate $P$. $\Omega$ is $P$-homogeneous if, and only if, it satisfies the following properties:

\begin{enumerate}
    \item $\Omega$ is a dense $D$-set;
    \item Any two node splittings of $P$ (as a subset of $\Omega$) contain the same number of sectors (either a natural number greater than $2$, or $\aleph_0$), and analogously for $\Omega\setminus P$; and
    \item The set $P$ satisfies one of the following three conditions:
    \begin{enumerate}
        \item For every infinite sector $\Sigma$ of every splitting, $\Sigma \cap P \neq \emptyset$ and $\Sigma \setminus  P \neq \emptyset$;
        \item $\set{P, \Omega \setminus P}$ is a true edge splitting; or
        \item either $|P| = 1$ or $|\Omega \setminus P| = 1$.
    \end{enumerate}
\end{enumerate}
\end{theorem}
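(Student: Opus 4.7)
The plan is to adapt the back-and-forth argument from the proof of Theorem~\ref{ultrahomogeneity theorem} to the weaker notion of $P$-homogeneity, in which partial isomorphisms $\phi\colon A\to B$ need only be extended when each of $A\cap P$, $A\setminus P$, $B\cap P$, $B\setminus P$ is nonempty. The ``if'' direction will be an inductive back-and-forth; the ``only if'' direction will be contrapositive, producing a non-extending partial isomorphism whenever one of (1), (2), (3) fails.

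For the ``if'' direction, fix enumerations and, at each stage, consider $x\in\Omega\setminus A$; let $\mathcal{C}$ be the splitting of $A$ induced by $x$ and let $\mathcal{D}=\phi(\mathcal{C})$. By Lemma~\ref{type definability lemma} and Remark~\ref{spltting is determined by qftp}, it suffices to find $y\in\Omega$ of the same color as $x$ which induces $\mathcal{D}$ on $B$. When $\mathcal{C}$ is an edge splitting, density (condition (1)) together with Lemma~\ref{true edge witness property} supplies density witnesses in the correct sectors, and one of the cases (3a)--(3c) supplies a point of the correct color therein. When $\mathcal{C}$ is a node splitting of size $d$, extend $\mathcal{C}\cup\{\{x\}\}$ to a splitting $\mathcal{C}'$ of $\Omega$ of size at least $d+1$ and extend $\mathcal{D}$ to $\mathcal{D}'$ using Lemma~\ref{splitting extension lemma}; then condition (2), applied to $P$ or to $\Omega\setminus P$ according to the color of $x$ and using that (under (3a)) the relevant sectors of $\mathcal{C}'$ all meet that color so that restriction to $P$ or $\Omega\setminus P$ preserves cardinalities, forces $|\mathcal{D}'|\geq d+1$; hence some sector of $\mathcal{D}'$ avoids $B$, from which (3) provides the required $y$. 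Cases (3b) and (3c) require separate bookkeeping since the restrictions of $\mathcal{C}'$ and $\mathcal{D}'$ to $P$ need not be full splittings; for (3c) the singleton side trivializes, while for (3b) the relevant regularity is used within a single half of $\{P, \Omega\setminus P\}$.

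For the ``only if'' direction, I would reproduce the constructions from Theorem~\ref{ultrahomogeneity theorem} adapted to the two-color setting. Failure of density yields a partial isomorphism whose extension is forbidden by (D3), with color-preserving witnesses chosen by the pigeonhole principle. Failure of the new regularity condition (2) is detected by monochromatic representatives of sectors from two node splittings of $P$ (or of $\Omega\setminus P$) of different sizes, together with enough elements of the opposite color on each side to satisfy the bichromatic hypothesis of $P$-homogeneity. Failure of (3) is the most delicate: if none of (3a)--(3c) holds then $|P|,|\Omega\setminus P|\geq 2$, $\{P,\Omega\setminus P\}$ is not a true edge splitting, and some infinite sector $\Sigma$ of some splitting of $\Omega$ avoids one of the colors (say $P$); I would then mimic the color-incompatibility construction of Theorem~\ref{ultrahomogeneity theorem}, ensuring that the initial partial isomorphism already contains both colors on each side, and using the failure of $\{P,\Omega\setminus P\}$ to be a true edge splitting to populate $\Sigma$ with enough extra elements to force an incompatibility.

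The main obstacle is case (3b), in both directions. When $\{P,\Omega\setminus P\}$ is itself a true edge splitting, the color of $y$ is tightly coupled to which half of this splitting it lies in, and the One Sector Lemma (Lemma~\ref{the one sector lemma}) shows that any splitting of $A$ different from the trace of $\{P,\Omega\setminus P\}$ decomposes as one ``crossing'' sector together with sectors lying inside a single half; the back-and-forth must match $y$'s half to $x$'s half while simultaneously solving the sector-extension problem. On the converse side, distinguishing the failure of (3b)---namely, $\{P,\Omega\setminus P\}$ being an edge splitting that is either refinable to a node splitting or has a singleton sector---from the failures of (3a) and (3c) requires correspondingly careful choices of non-extending partial isomorphism.
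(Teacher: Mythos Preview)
Your high-level plan matches the paper's: back-and-forth for sufficiency, broken into the three subcases (3a)--(3c) with (3b) handled via the One Sector Lemma inside the ``monochromatic half,'' and contrapositive constructions for necessity. The sufficiency sketch is close to what the paper does and would go through.

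The gap is in the necessity of (3). Your plan (``mimic the color-incompatibility construction of Theorem~\ref{ultrahomogeneity theorem} \ldots using the failure of $\{P,\Omega\setminus P\}$ to be a true edge splitting to populate $\Sigma$'') is not enough. The obstruction is that when (3a) fails but $\{P,\Omega\setminus P\}$ happens to be an edge splitting (just not a \emph{true} one), the Theorem~\ref{ultrahomogeneity theorem} construction does not directly produce a bichromatic non-extending partial isomorphism: the forbidden target may be forced to lie in a monochromatic region, but you still need both colors on both sides of the initial map, and arranging this while keeping the $D$-relation obstruction is genuinely delicate. The paper resolves this by a case split on whether there exist $a,b\in P$, $c,d\notin P$ with $D(ac;bd)$. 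If such a configuration exists, a direct three-point partial isomorphism into a monochromatic sector works. If no such configuration exists, the paper defines an equivalence relation $a\sim b$ iff $a,b$ lie in the same $Q\in\{P,\neg P\}$ and $D(xy;ab)$ for all $x,y\notin Q$, proves (nontrivially) that its classes form a splitting refining $\{P,\Omega\setminus P\}$, and then uses a strict refinement of that splitting to build the non-extending map. This equivalence-relation construction is the missing idea in your outline; without it the necessity argument for (3) does not close.
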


    \begin{proof} ~

    \textit{Sufficiency of $(1)+(2)+(3a)$}
    
    \vspace{3pt}
    
    If (3a) holds then notice that (2) becomes equivalent to any two node splittings of $\Omega$ having the same size, so by Theorem \ref{ultrahomogeneity theorem}, $\Omega$ is ultrahomogenous with $P$ and $\lnot P$ both nonempty and we are done. 

    \vspace{3pt}

    \textit{Sufficiency of $(1)+(2)+(3c)$}
    
    \vspace{3pt}
    
    If (3c) holds then the proof is identical to the ``if'' direction of Theorem \ref{ultrahomogeneity theorem} by noting that the initial partial isomorphism must fix the sole element of $P$ or $\Omega \setminus P$, as the case may be, and so $P$-homogeneity follows.

    \vspace{3pt}

    \textit{Sufficiency of $(1)+(2)+(3b)$}
    
    \vspace{3pt}

    Once again we will repeat the proof of the ``if'' direction of Theorem \ref{ultrahomogeneity theorem}, with modifications. Recall that $\phi: A \rightarrow B$ is a partial isomorphism of finite substructures, which for this proof are assumed to contain elements of $A$ and $B$, $x$ is an element to be added to the domain of $\phi$, which we will assume without loss of generality to be in $P$, $\mathcal{C}$ is the splitting of $A$ induced by $x$ and $\mathcal{D}$ is the image of $\mathcal{C}$ under $\phi$. 

    Before we start, note that $\set{A \cap P, A \setminus P}$ is a splitting of $A$, since it is the restriction to $A$ of the splitting $\set{P, \Omega \setminus P}$, and similarly $\set{B \cap P, B \setminus P}$ is a splitting of $B$. We wish to construct a $y$ such that $\phi \cup \set{(x,y)}$ is an isomorphism.

    When $\mathcal{C}$ is a node splitting, then the construction of $y$ in Theorem \ref{ultrahomogeneity theorem} relied on a slightly different form of condition (2), in which all node splittings of $\Omega$ had the same size. Here, rather all node splittings of $P$ (and of $\lnot P$) have the same size. With this in mind, modify the construction as follows:

    By construction, $\mathcal{C} \cup \set{\set{x}}$ and $\mathcal{D}$ are node splittings of $A \cup \set{x}$ and $B$, respectively, and so they may be extended to full node splittings $\mathcal{C}'$ and $\mathcal{D}'$ of $\Omega$. 
    
    By the One Sector Lemma all but one of the sectors of $\mathcal{C}'$ are contained in $P$ or $\lnot P$, and the exception contains the other. In particular, if two sectors have elements of $P$, then all but one sector is contained in $P$. The sector which contains $x$ and at least one other sector of $\mathcal{C}$ have this property, so all but one sectors of $\mathcal{C}$ are contained in $P$. Similarly all but one sectors of $\mathcal{D}$' are contained in some sector of $\set{P, \lnot P}$, and since there are at least two sectors in $\mathcal{D}'$ containing images of points of $B \cap P$ (because sectors of $\mathcal{D}$ are images of sectors of $\mathcal{C}$ and there are at least three sectors in $\mathcal{C}$, at least two contained in $P$), again that sector is $P$.

    So then all but one sectors of $\mathcal{C'}$ and $\mathcal{D}'$ are contained in $P$, and the exception in each case cannot be $\lnot P$ as otherwise $\set{P, \lnot P}$ could be refined to a node splitting, contradicting (3b), so that $|\mathcal{C}'| = |\mathcal{C}' \cap P|$ and $|\mathcal{D}'| = |\mathcal{D}' \cap P|$. Then $\mathcal{C}' \cap P$ and $\mathcal{D}' \cap P$ to $P$ are node splittings of $P$, and so by (2) they have the same size and particularly all sectors of $\mathcal{D}'$ have elements of $P$. As in the proof of Theorem \ref{ultrahomogeneity theorem}, choose a $y$ in a sector of $\mathcal{D}'$ which is disjoint from any sector of $\mathcal{D}$ and such that $y \in P$. By an analogous argument to that theorem's proof, $\phi \cup \set{(x,y)}$ is an isomorphism.

    Now, when $\mathcal{C}$ is an edge splitting, the construction once again proceeds analogously to Theorem \ref{ultrahomogeneity theorem}, but we must ensure that the constructed $y$ has the same $P$-type as $x$.

    Recall that we set $\mathcal{C} = \set{C_1, C_2}, \mathcal{D} = \set{D_1, D_2}$ and we chose complimentary pairs of elements $a,b \in C_1, c,d\in C_2$ as in Lemma \ref{BAF lemma 2}. Note that one of $\set{a,b}$ or $\set{c,d}$ must be contained in $P$, as otherwise both $C_1$ and $C_2$ have elements of $\lnot P$, so all but one of the sectors in the splitting $\set{C_1, C_2, \set{x}}$ are contained in $\lnot P$, and the exception cannot be $\set{x}$, so that $x \not \in P$ in contradiction with the hypothesis.

    So, suppose that $\set{a,b} \subseteq P$. If $c \in P$ then $\phi(a), \phi(b), \phi(c)$ are all also in $P$ and so by an analogous argument to the previous, $y \in P$ and we are done. Similarly we are done if $d \in P$, so suppose that $c,d \not \in P$.

    Now, since $a,b \in P$ and $c,d \not \in P$, the same holds of their images under  $\phi$. That is to say we have $\phi(a), \phi(b) \in P, \phi(c), \phi(d) \not \in P$, then by Lemma \ref{true edge witness property} there exists a witness of density $y$ for $(\phi(a), \phi(b), \phi(c), \phi(d))$ which is in $P$, and so with this choice of $y$, $\phi\cup \set{(x,y)}$ is an isomorphism and we are done.

    This concludes the proof of the ``if'' direction of this theorem.

    \vspace{3pt}

    \textit{Necessity of (3)}

    \vspace{3pt}
    
    Suppose that (3) does not hold. That is, that there exists an infinite sector $\Sigma$ of some splitting which is contained in $P$ or disjoint from $P$ (without restriction, suppose the former), $P$ is not a true edge splitting, and $\lnot P$ has at least two elements (which are obviously outside $\Sigma$). As in the previous theorem, we wish to avoid a case where $P = \Sigma$, so if that is the case replace $\Sigma$ with another infinite sector which is strictly contained in it, so that $\Omega \setminus \Sigma$ can be assumed to contain elements of $P$ and $\lnot P$.

    We shall split the proof into two cases, depending on whether a certain finite substructure exists in $\Omega$:
    \begin{itemize}
        \item  Case 1: there exist elements $a, b \in P$, $c,d  \in \lnot P$ all distinct such that $ac|bd$.

        If this is the case, then let $x,y \in \Sigma$ be distinct and consider the partial isomorphism given by $a \mapsto x,  b\mapsto y, d\mapsto d$. This is an isomorphism since it preserves the $P$-types of the elements and its domain and range have elements of $P$ and $\lnot P$, but it cannot be extended to a full automorphism since $c$ can have no image. If $c'$ was such an image, then since $ac|bd$ we would have $xc'|yd$, so then $c' \in \Sigma$ since otherwise the relation could not hold as $xy|c'd$ would hold instead (because $x,y \in \Sigma, c', d \not\in \Sigma)$, but then $c' \in P$ and so it cannot be the image of $c$ since $c \in \lnot P$. Thus $\Omega$ is not P-homogenous.
    \end{itemize}
    \begin{itemize}

        \item  Case 2: there do not exist such elements.

        Define an equivalence relation $\sim$ on $\Omega$ as follows: $a \sim b$ if and only if, for some $Q \in \set{P, \lnot P}$, $a$ and $b$ are both in $Q$ and for any $x,y \not \in Q$ we have $xy|ab$. We must check that this is indeed an equivalence relation. Reflexivity and symmetry are trivial. For transitivity, let $a,b,c \in Q$ such that $a\sim b$ and $b \sim c$, and suppose for a contradiction there are $x,y \not \in  Q$ such that $\lnot ac|xy$. We know $bc|xy$ by hypothesis so by (D3) either $ac|xy$ or $bc|xa$. The former is assumed untrue so $bc|xa$. Similarly, we know $ba|xy$ and so by (D3) either $ca|xy$ or $ba|xc$ and the former is assumed untrue so $ba|xc$, but then $cb|ax$ and $ba|xc$ are simultaneously true which contradicts (D2), so that $\sim$ is indeed an equivalence relation.

        A few observations are pertinent about this relation: If $a,b \in Q$ and $a \not \sim b$, then if $x,y$ are witnesses of this (i.e. $x, y \not \in Q$ and $\lnot ab|xy$), then also $x\not \sim y$, as witnessed by $a,b$. Also, if $x,y \not \in Q$ witness $a \not \sim b$, then in fact any pair $x', y' \not\in Q$ such that $x' \not \sim y'$ will witness $a \not \sim b$. Too see this we will show that $x$ may be switched for any $\lnot Q \ni x' \not \sim y$ (as we may later repeat for $y'$ and obtain the full result). Let $c,d \in Q$ witness that $x' \not \sim y$, that is $\lnot cd|xy$. Then we have the following chain of implications by repeated applications of (D3):
        
        \begin{equation*}
            \begin{split}
                ab|x'y &\Rightarrow xb|x'y \vee ab|xy \quad \textrm{(latter false since $x,y$ witness $a \not\sim b$)}\\
                & \Rightarrow xb|x'y\\
                & \Rightarrow xc|x'y \vee xb|x'c \quad  \textrm{(latter false by starting assumption.)}\\
                & \Rightarrow xc|x'y \\
                & \Rightarrow dc|x'y \vee xc|x'd \quad \textrm{(latter false by starting assumption.)}\\
                & \Rightarrow cd|x'y
            \end{split}
        \end{equation*}
        
        Here ``starting assumption'' refers to the assumption at the start of this case, which is equivalent to the statement that $x_1x_2|x_3x_4$ is the only possible $D$-relation if $x_1, x_2 \in Q, x_3, x_4 \in \lnot Q$. Since we know $\lnot cd|xy$, we have $ab|x'y$ and so indeed $x', y$ witness $a \not \sim b$. As a last corollary, either this relation has two classes, $P$ and $\lnot P$, or it has at least 4 classes, at least 2 contained in $P$ and 2 in $\lnot P$. 

        We now claim that the classes of $\sim$ in fact form a splitting. To see the first condition, let $a \sim b$ and $c, d$ be such that $c\not \sim a, d \not \sim a$. If $a,b \in Q$ and $c,d \not \in Q$ then it is clear from the construction that $ab|cd$, and we are done. If $a,b,c \in Q, d\not \in Q$ then let $e \not\in Q$ be such that $e \not \sim d$, then $ab|ed$ and so by (D3) $cb|ed$ or $ab|cd$. The former contradicts $b\not \sim c$ so $ab|cd$ and we are done. Lastly, if $a,b,c,d \in Q$, let $e, f \in \lnot Q, e \not \sim f$ and repeat the previous argument twice on $ab|ef$ to obtain $ab|cd$ as desired. Thus the first condition is fulfilled.

        Now for the second condition. Take four elements $a,b,c,d$ in different $\sim$-classes. Up to relabeling there exist three cases: (1) $a,b \in Q, c,d \not \in Q$, (2) $a,b,c \in Q$, $d \not \in Q$ or (3) $a,b,c,d \in Q$ for some $Q$. But note that for a tuple in case (3) if we apply (D3) to the $D$-relation between with an element in $\lnot Q$ we may obtain a new tuple of 4 elements with a $D$-relation between them, this time in case (2), and then for a tuple in case (2) we may repeat the trick and once again use (D3) with an element in $\lnot Q$ (which is not in the same class as $d$ and such that it replaces an element of $Q$ in either disjunct of (D3)) so that we obtain a 4-tuple in case (1). Thus, we may well assume that $a,b \in Q$ and $c,d \in \lnot Q$ and that there is a $D$-relation between them, but this is impossible, as the only possible such relation is $ab|cd$ by the starting assumption, and this would contradict the fact that $c,d$ must witness $a \not \sim b$ by the earlier observation (and vice versa), and so indeed the classes of $\sim$ forms a splitting $\mathcal{C}$.

        This constructed splitting has the property that it refines the partition $\set{P, \lnot P}$. In fact it may be equal to it. However, since $\set{P, \lnot P}$ is not a true edge splitting, if $\mathcal{C} = \set{P, \lnot P}$ then then there is a refinement of $\mathcal{C}$ which is also a splitting, so that we may assume that $\mathcal{C}$ is a strict refinement of $\set{P, \lnot P}$. Let $Q \in \set{P, \lnot P}$ be such that it contains two or more sectors $\Sigma_1, \Sigma_2 \in \mathcal{C}$, and let $\Sigma_0 \in \mathcal{C}$ be contained in $\lnot Q$. 
        
        Pick $a \in \Sigma_0$, $b,c \in \Sigma_1$, $d \in \Sigma_2$. and consider the partial isomorphism $a \mapsto a, b \mapsto b, c\mapsto d$. It is an isomorphism since it preserves the $P$-types and its domain and range have elements of $P$ and $\lnot P$, but it cannot be extended to a full automorphism since $d$ can't have an image: since $bc|ad$ (as $b,c \in \Sigma_1, a,d\not \in \Sigma_1$), such an image $d'$ would satisfy $bd|ad'$, but then $d'$ cannot be in $\lnot Q$ since $d \in Q$, it cannot be in $\Sigma_2$ as then $dd'|ab$ would hold instead, it cannot be in $\Sigma_1$ as $bd'|ad$ would hold instead, and it cannot be in $Q \setminus (\Sigma_1 \cup \Sigma_2)$ because then no $D$-relations could hold on $(a,b,d,d')$. Thus $\Omega$ is not P-homogeneous and we are done.
        \end{itemize}

    This shows the necessity of condition (3), so to prove the necessity of (1) and (2) we will assume without loss of generality that (3) holds. The case in which (3c) holds reduces easily to the previous theorem. The case of (3a) also reduces to the previous theorem by noting that all the partial isomorphisms in the analogous section can be chosen to have elements of $P$ and $\lnot P$ in their domain and range, so we need only check the necessity of (1) and (2) in the case of (3b).

    \vspace{3pt}

    \textit{Necessity of (1) (assuming (3b))}

    \vspace{3pt}

    Since $\set{P, \lnot P}$ is an edge splitting, if $a,b \in P$ and $c,d \in \lnot P$, then $ab|cd$. In particular by a straightforward check this implies that if $ab|cd$ and there are elements of $P$ of each side on the line (i.e ($Pa \vee Pb) \wedge (Pc \vee Pd)$ holds), then any witness of density for $(a,b,c,d)$ is in $P$

    Now, as in the proof of Theorem \ref{ultrahomogeneity theorem}, suppose there exist $a,b,c,d$ such that $ab|cd$ but they have no witness of density. If all are in $P$ (or $\lnot P$), then repeat the proof of \ref{ultrahomogeneity theorem} with the addition of fixing a dummy element $y \not \in P$ in the isomorphism. Because of the previous observation this isomorphism cannot be extended to a full automorphism by an unchanged argument.

    If three of $a,b,c,d$ are in $P$, then relabel so that $a \in \lnot P$ and repeat the argument in Theorem \ref{ultrahomogeneity theorem}, by doing this we ensure that the element $x$ constructed therein is in $P$ and so the rest of the argument proceeds identically (no need to fix a dummy element).

    If two of $a,b,c,d$ are in $P$, then either they are $a$ and $b$ or they are $c$ and $d$, but then by Lemma \ref{true edge witness property} it is impossible for them not to have a witness of density, so we are done.

    \vspace{3pt}

    \textit{Necessity of (2) (assuming (3b))}

    \vspace{3pt}

    Suppose $\mathcal{C}$, $\mathcal{D}$ are two node splittings of $P$ with a different number of elements, say $|\mathcal{C}|>|\mathcal{D}|$. Extend both to to splittings $\mathcal{C}'$, $\mathcal{D}'$ of $\Omega$. Once again by the One Sector Lemma all but one sectors of both $\mathcal{C}'$ and $\mathcal{D}'$ are contained in $P$, so that there are distinguished sectors $C_0 \in \mathcal{C}', D_0 \in \mathcal{D}'$ which intersect $\lnot P$.

    Take an $a_0 \in C_0 \setminus P, b_0 \in D_0 \setminus P$, and a sequence $b_1, \dots, b_n$ from each sector of $\mathcal{D}$ (not including the one which extends to $D_0$, if present) and an analogous sequence $a_1, \dots, a_n, a_{n+1}$ from different sectors of  $\mathcal{C}$, as in the proof of the necessity of (2) in Theorem \ref{ultrahomogeneity theorem}. By a similar argument to that proof the isomorphism $b_i \mapsto a_i$ does not extend to a full automorphism and its domain includes elements of $P$ and $\lnot P$, so we are done.

    \begin{corollary}\label{Quantifier Elimination}
    The theory of a dense regular $D$-set has quantifier elimination.
\end{corollary}
\begin{proof}
    Any countable model of the theory is an $\omega$-categorical relational structure whose countable structure is ultrahomogeneous, hence its theory has has quantifier elimination.
\end{proof}

    \end{proof}

\section{Indiscernibles, distality, and dp-minimality}

In this section we will characterize indiscernible sequences in colored $D$-sets with quantifier elimination, which we will use to prove their dp-minimality (Theorem~\ref{dpmin}). An important part of our analysis is to understand when an $\emptyset$-indiscernible sequence $s$ is indiscernible over a larger parameter set $B$, for which we define its  ``hull'' $H(s)$ (Definition~\ref{disc_hull}) and show that $s$ is indiscernible over $B$ if, and only if, $H(s)$ is disjoint from $B$ (Theorem~\ref{Indiscernibility over a set full characterization}). More or less, this hull $H(s)$ is the union of all the sectors which intersect the moving parts of the sequence $s$. We also show that mutually indiscernible sequences have disjoint hulls (Corollary~\ref{disjoint_hulls}), and from this, dp-minimality quickly follows. This overall strategy for establishing dp-minimality was inspired by similar work by Javier de la Nuez González on planar graphs with separation predicates (see \cite{nuezgraphs}). We also note that in recent work by Almazaydeh, Braunfeld, and Macpherson, indiscernible sequences in certain \emph{limits of $D$-sets} were analyzed (\cite{almazaydeh2024omega}), but to our knowledge this is the first time a criterion for indiscernibility in a pure $D$-set has been given.

First we recall some standard definitions.

\begin{definition}
    Let $I$ be a linear order. An ordered sequence $(a_i: i \in I)$ of tuples in an $L$-structure $M$ is called \emph{(order) indiscernible} if for any formula $\phi$ and any two strictly increasing sequences $i_0 < \dots < i_n, j_0 < ... < j_n$ in $I$, we have $M \models \phi(a_{i_0}, \dots, a_{i_n})$ if and only if $M \models \phi(a_{j_0}, \dots, a_{j_n})$.
\end{definition}

\begin{definition}
    A theory $T$ is distal if whenever $s$ is an indiscernible sequence of singletons over a set $B$ that is the concatenation $s_1 + s_2$ of two indiscernible sequences without endpoints, and $c$ is some element such that $s_1 + c + a_2$ is indiscernible over the empty set, then $s_1 + c + s_2$ is indiscernible over $B$.
\end{definition}

\begin{definition}
    A theory $T$ is dp-minimal if for every model $\Omega$ of $T$, every pair $s_1, s_2 \subseteq \Omega$ of mutually indiscernible sequences and any (singleton) element $b \in \Omega$, one of $\set{s_1, s_2}$ is indiscernible over $b$.
\end{definition}

This definition coincides with other definitions of dp-minimalility. See \cite{simon2015guide} for details.

In order to avoid troublesome edge cases that appear in results in this section whenever indiscernible sequences have a first or last element, we shall assume unless otherwise noted that all indiscernible sequences are unbounded on both sides (henceforth ``unbounded sequences''). Since any indiscernible sequence can be extended to an unbounded indiscernible sequence in a sufficiently saturated extension, it suffices to check dp-minimality for unbounded sequences, and distality only requires checking such sequences.

\begin{lemma}\label{Indiscernibles in one variable}
    Let $\Omega$ be a dense, regular $D$-set. A sequence $s = (a_i: i \in I)$ of singletons is order-indiscernible if, and only if, one of the following three conditions hold:
    \begin{enumerate}
        \item $s$ is constant; or
    
        \item All elements of $s$ are distinct and there are no non-trivial $D$-relations between elements of $s$; or

        \item All elements of $s$ are distinct and for any $i < j < k < \ell$, we have $D(a_i a_j; a_k a_\ell$).
    \end{enumerate}
\end{lemma}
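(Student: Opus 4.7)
The plan is to split into the ``if'' and ``only if'' directions, using in both that dense regular $D$-sets have quantifier elimination (Corollary~\ref{Quantifier Elimination}), so indiscernibility of $s$ is equivalent to all increasing $n$-tuples from $s$ sharing the same \emph{quantifier-free} type for every $n$. The quantifier-free type of such a tuple in the pure $D$-set language is determined by its equality pattern and the $D$-relations among its 4-element subtuples.

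For the ``if'' direction, I would observe that in each of the three cases the pattern on any increasing tuple is uniform: in (1) every tuple is constant; in (2) and (3) the elements are distinct and the specified $D$-relation (none in (2), and $D(a_{i_1}a_{i_2};a_{i_3}a_{i_4})$ in (3)) holds on every increasing 4-element subtuple. This forces all increasing $n$-tuples to share the same quantifier-free, and hence full, type.

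For the ``only if'' direction, first assume $s$ is indiscernible and some $a_i = a_j$ with $i<j$. Then 2-indiscernibility forces $a_{i'} = a_{j'}$ whenever $i' < j'$, so by chaining, $s$ is constant (case (1)). Otherwise all elements of $s$ are distinct. By (D2), for any four distinct elements at most one of the three possible $D$-relations $D(ab;cd)$, $D(ac;bd)$, $D(ad;bc)$ holds. Hence 4-indiscernibility pins down a single pattern applying uniformly to every $i<j<k<\ell$: (A) no $D$-relation on $\{a_i,a_j,a_k,a_\ell\}$ (yielding (2)); (B) $D(a_i a_j; a_k a_\ell)$ (yielding (3)); (C) $D(a_i a_k; a_j a_\ell)$ (``crossed''); or (D) $D(a_i a_\ell; a_j a_k)$ (``nested'').

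The remaining obstacle --- and the main step of the proof --- is to rule out (C) and (D). I would do both by choosing five indiscernibles $a_1 < a_2 < a_3 < a_4 < a_5$. In the nested case, the pattern gives both $D(a_2 a_3; a_1 a_4)$ and $D(a_2 a_3; a_1 a_5)$ (after applying (D1)), so the Transitivity Lemma yields $D(a_2 a_3; a_4 a_5)$; but the pattern also gives $D(a_2 a_5; a_3 a_4)$, and these two relations correspond to distinct pairings of $\{a_2, a_3, a_4, a_5\}$, violating (D2). The crossed case is symmetric: the pattern gives $D(a_1 a_3; a_2 a_4)$ and $D(a_1 a_3; a_2 a_5)$, whence transitivity yields $D(a_1 a_3; a_4 a_5)$, contradicting the pattern-predicted $D(a_1 a_4; a_3 a_5)$ by (D2). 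This five-point pigeon-hole is the heart of the argument; the remainder is routine bookkeeping with QE.
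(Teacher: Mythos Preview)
Your proposal is correct and follows essentially the same route as the paper: both directions rely on quantifier elimination, and in the ``only if'' direction both you and the paper pick five indices to rule out the crossed and nested patterns. The only cosmetic difference is that you derive the contradiction via the Transitivity Lemma together with (D2), whereas the paper applies (D3) directly; the underlying five-point argument is the same.
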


Before we prove this lemma, we introduce the following terminology:
\begin{definition}
\label{petalmonotonic}
    A sequence satisfying condition (2) in the previous lemma is called a \emph{petaled} sequence, and a sequence satisfying condition (3) is called a \emph{monotonic} sequence.
\end{definition}

\begin{proof}[Proof (of Lemma \ref{Indiscernibles in one variable}).]
    That a constant sequence is indiscernible is trivial. In a monotonic sequence the truth value of $D(a_ia_j;a_ka_\ell)$ depends only on the order type of $(i, j, k, \ell)$, since we know it's true for $i<j<k<\ell$ and other order types follow from axioms (D1) and (D2). For petaled sequences the truth value of $D(a_ia_j;a_ka_\ell)$ is always false, and thus can also be said to be determined by the order type of $(i, j, k, \ell)$. Since we have assumed density and regularity, we have quantifier elimination by Corollary \ref{Quantifier Elimination}. That is any formula is a boolean combination of $D$-relations and equalities, so indeed the truth value of any formula with parameters in a petaled or monotonic $s$ is determined by the order type of its parameters and so such sequences are indiscernible.
    
    Thus conditions (1), (2) and (3) are each sufficient for indiscernibility. Let us now check their necessity. Suppose $s$ is indiscernible and assume that $s$ is not constant, It is easy to see that all elements of $s$ are thus distinct. The truth value of $D(a_i a_j; a_k a_\ell)$ is thus determined by the order type of $(i,j,k,\ell)$. We make the notational shorthand of writing $ij|k\ell$ for $D(a_i a_j; a_ka_\ell)$. We have the following cases:
    \begin{itemize}
        \item Case 1: $ij|k\ell$ holds when $i < j < k < \ell$

        Then the sequence is monotonic

        \item Case 2: $ik|j\ell$ holds when $i < j < k < \ell$

        We claim this is impossible. For this, pick $i < j < k < \ell < m$, then we would have $ik|j\ell$, so by (D3) either $im|j\ell$ or $ik|m\ell$. However the former conflicts with $i\ell|jm$ and the latter conflicts with  $i\ell|km$, both of which follow from the order $i<j<k<\ell<m$.

        \item Case 3: $i\ell|jk$ holds when $i < j < k < \ell$

        Again we claim this to be impossible. Pick $i < j < k < \ell < m$ as before. Then we would have $i\ell|jk$ and thus by an alternate form of (D3) either $m\ell|jk$ or $i\ell|mk$. The former conflicts with $jm|kl$ and the latter with $im|k\ell$, both of which follow from the order $i<j<k<\ell<m$.

        \item Case 4: There are no non-trivial $D$-relations.

        Then the sequence is petaled.
    \end{itemize}
    Thanks to axiom (D1), these are all the cases. Thus the conditions are necessary for indiscernibility.
\end{proof}

\begin{remark}
    The ``necessity'' portion of the previous proof holds even in the absense in quantifier elimination. Thus, even in a general $D$-set all indiscernible sequences of singletons are either constant, monotonic or petaled.
\end{remark}

\begin{definition}
\label{disc_hull}
    Given an 1-element indiscernible sequence $s = (a_i)_{i \in I}$, we define $H(s)$ the \emph{discernible hull} of $s$ as follows:
    \begin{itemize}
        \item If $s$ is constant, then $H(s)$ is empty.
        \item If $s$ is petaled, then extend $\set{\set{a_i}: i \in I}$ to a full node splitting $\mathcal{C}$. $H(s)$ is the union of all sectors of $\mathcal{C}$ which contain elements of $s$.
        \item If $s$ is monotonic, then $H(s)$ is the union of $s$ with the sets:
        \begin{equation*}
            \begin{split}
                H_1(s) &:= \set{x: \textrm{ there are } i < j < k  \textrm{ such that }a_ia_k|a_j x},\\
                H_2(s) &:=\set{x: \textrm{ there are } i<j<k \textrm{ such that no $D$-relations on $\set{a_i, a_j, a_k, x}$ hold}},\\
                H_3(s) &:= \set{x: \textrm{ there are } i<j<k<\ell \textrm{ such that } a_ix|a_ka_\ell \textrm{ and } a_ia_j|xa_\ell}.
            \end{split}
        \end{equation*}
    \end{itemize}

    For an $n$-element indiscernible sequence $s$, note that each of its component sequences must be an indiscernible sequence of singletons. With this in mind, define the discernible hull of $s$ as the union of the discernible hulls of its component sequences.
\end{definition}

Before proceeding, let us remark that when $s$ is unbounded, the set $H_3$ actually contains the sets $H_1$ and $H_2$. However we still list the sets separately as the elements of $H_1 \setminus H_3$ and $H_2 \setminus H_3$ behave substantially different to the elements in $H_3$. We shall also give intuition for the nature of the three sets $H_1, H_2, H_3$ which form the hull of $s$ when $s$ is a monotonic sequence indexed by, say, the integers. We may think of $s$ as the leaves of the following infinite tree, as in Theorem \ref{D-sets are trees}:

 \begin{center}
\begin{tikzpicture}[scale=1.5,vertex/.style={circle,draw,fill=black,inner sep=0pt,minimum size=5pt}]

    \foreach \i in {0, 1, 2, 3, 4, 5} {
        \node[vertex] (x\i) at (\i,0) {};
        \node[vertex, label={[font=\huge]above:$a_{\i}$}] (y\i) at (\i+1,1) {};
        \draw (x\i) -- (y\i);
    }
    \draw (0, 0) -- (5, 0);
    \draw[dashed] (5,0) -- (6, 0);
    \draw[dashed] (-1,0) -- (0, 0);

    \end{tikzpicture}
    
    \end{center}

Now, if we wish to attach an $x \in H(s)$ to this tree, it may happen in essentially three ways: either $x$ attaches between a leaf and inner node of the tree, at an inner node of the tree, or between two inner nodes of the tree:
 \begin{center}
\begin{tikzpicture}[scale=1.5,vertex/.style={circle,draw,fill=black,inner sep=0pt,minimum size=5pt}]
\foreach \i in {0, 1, 2, 3, 4, 5} {
        \node[vertex] (x\i) at (\i,0) {};
        \node[vertex, label={[font=\huge]above:$a_{\i}$}] (y\i) at (\i+1,1) {};
        \draw (x\i) -- (y\i);
    }
    \draw (0, 0) -- (5, 0);
    \draw[dashed] (5,0) -- (6, 0);
    \draw[dashed] (-1,0) -- (0, 0);

    \node[vertex, color=blue, label={[font=\huge,color=blue]above:$x$}] (x1) at (1.5, 1) {};
    \node[vertex, color=blue, label={[font=\huge,color=blue]above:$x$}] (x2) at (2.5, 1) {};
    \node[vertex, color=blue, label={[font=\huge,color=blue]above:$x$}] (x3) at (3.5,1) {};

    \draw[color=blue] (x1) -- (1.5, 0.5);
    \draw[color=blue] (x2) -- (2, 0);
    \draw[color=blue] (x3) -- (2.5, 0);
    \end{tikzpicture}
    
    \end{center}
When $x$ attaches itself between an inner node and a leaf node $a_j$, then it is in $H_1(s)$, witnessed by any $(a_i, a_j, a_k)$ where $i < j < k$, as in the leftmost $x$ in the drawing; when it attaches itself at an inner node $a_j$, then it is in $H_2(s)$, witnessed identically, as in the middle $x$ in the drawing; when it attaches itself between two inner nodes $a_j < a_k$, then it is (strictly) in $H_3(s)$ witnessed by any $(a_i, a_j, a_k, a_\ell)$ with $i < j < k < \ell$, as in the rightmost $x$ in the drawing. Thus, the only way to avoid being in $H(s)$ is to be ``a long way to the left or to the right'' such that it does not attach to the tree at all. These ideas are formalized in the following lemma:

\begin{lemma}\label{Characterizing the hull of monotonic sequences} Let $s$ be an unbounded monotonic sequence of singletons.
    \begin{enumerate}
    
        \item  Define $L(s)$ the \emph{left frontier} of $s$ (resp. $R(s)$ the \emph{right frontier} of $s$) as the intersection of all sectors $\Sigma$ such that $\Sigma \cap s$ is a nonempty initial (resp. final) segment of $s$. Then $\Omega \setminus H(s) = L(s) \cup R(s)$.

        \item If $x \in L(s)$, then $s' := s \cup \set{x}$, where $x$ is added as a minimum, is a monotonic sequence. If $x \in R(s)$ the same is true if added as a maximum. (This is the only case we will treat of an indiscernible sequence with an endpoint.)

        \item  If $x \in H_1(s) \cup H_2(s)$ with witnesses $a_i < a_j < a_k$, then the sequence $s' := (s \setminus \set{a_j}) \cup \set{x}$, where $x$ takes the same place in the order that $a_j$ held, is still a monotonic sequence.

        \item If $x \in H_3(s) \setminus (H_1(s) \cup H_2(s))$, then we may add $x$ to $s$ as a new element such that the sequence remains monotonic and unbounded. 
    \end{enumerate}

\end{lemma}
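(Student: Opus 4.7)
The overall strategy is to analyze the possible $D$-relations between a fixed $x \in \Omega$ and triples (or quadruples) from the monotonic sequence $s$, using the splitting axioms together with (D2), (D3), and the Transitivity Lemma. The key observation is that if $x \in L(s)$, then for each $n$ there is a sector $\Sigma_n$ containing both $x$ and the initial segment $\{a_i : i \leq n\}$ and excluding $\{a_i : i > n\}$, and the splitting axioms give an abundance of ``positive'' $D$-relations from which the desired conclusions follow via (D2); conversely, absence from $H_1 \cup H_2 \cup H_3$ forces enough constraints on $x$ to place it in $L(s)$ or $R(s)$.

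For part (1), I would prove $L(s) \cup R(s) \subseteq \Omega \setminus H(s)$ first. Assume $x \in L(s)$. Given any candidate triple $i < j < k$, apply the splitting axioms in $\Sigma_i$: since $x, a_i \in \Sigma_i$ and $a_j, a_k \notin \Sigma_i$ we immediately get $a_i x \mid a_j a_k$. This nontrivial relation excludes $x \in H_2$ (for this witness triple), while by (D2) it rules out $a_i a_k \mid a_j x$, excluding $x \in H_1$. For a candidate $H_3$-quadruple $i < j < k < \ell$, working in $\Sigma_i$ gives $a_i x \mid a_j a_\ell$, which by (D2) is incompatible with $a_i a_j \mid x a_\ell$; hence $x \notin H_3$. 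For the reverse inclusion, suppose $x \notin H(s)$ and $x \notin s$. For each triple $i < j < k$, the condition $x \notin H_2$ forces \emph{some} $D$-relation on $\{a_i, a_j, a_k, x\}$, and $x \notin H_1$ excludes $a_i a_k \mid a_j x$; applying (D3) to the monotonic relation $a_i a_j \mid a_k a_{k+1}$ with parameter $x$ (and similar applications) narrows the options to $a_i a_j \mid a_k x$ or $a_j a_k \mid a_i x$. A coherence argument using unboundedness of $s$ and overlap of triples shows that one of these alternatives holds uniformly across all triples, which is precisely the statement that $x$ belongs to every sector whose intersection with $s$ is an initial or final segment.

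Parts (2), (3) and (4) are then direct verifications of monotonicity of the proposed extended sequence, using Lemma~\ref{Indiscernibles in one variable}. For part (2), given $x \in L(s)$ and $i < j < k$, membership $x, a_i \in \Sigma_i$ and $a_j, a_k \notin \Sigma_i$ yields $x a_i \mid a_j a_k$ instantly from the splitting axiom. For part (3), the defining witness $a_i < a_j < a_k$ provides either the relation $a_i a_k \mid a_j x$ (if $x \in H_1$) or absence of any relation on $\{a_i, a_j, a_k, x\}$ (if $x \in H_2$); combined with the existing monotonic relations among the $a_m$'s, repeated applications of (D3) and the Transitivity Lemma propagate these into the required monotonic relation $b_{m_1} b_{m_2} \mid b_{m_3} b_{m_4}$ for every ordered quadruple in the modified sequence (where $b_j = x$). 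For part (4), the conjunction $a_i x \mid a_k a_\ell$ and $a_i a_j \mid x a_\ell$ pins $x$ between $a_j$ and $a_k$ in the tree picture; inserting $x$ there and again propagating via (D3) and transitivity from the witness relations together with the monotonic relations on $s$ yields the required relation for every ordered quadruple involving $x$.

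The main obstacle I anticipate is the coherence step in the reverse direction of part (1): moving from local information (for each triple, the relation is one of two alternatives) to the global statement that the same alternative is chosen uniformly, where unboundedness of $s$ is essential to rule out a ``change of side'' somewhere in the middle. The verifications in (3) and (4) are largely mechanical once the correct case split is made, but they require some care to handle all orderings in which $x$ can appear within a quadruple.
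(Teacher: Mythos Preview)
Your proposal is correct and structurally parallel to the paper's proof, with one difference in toolkit: for parts (2)--(4) the paper passes through the splitting machinery---extending a partition such as $\{\{a_i\},\{a_j\},\{a_k\}\}$ or $\{\{a_j\},\{a_k\},\{x\}\}$ to a full splitting of $\Omega$ via Lemma~\ref{splitting extension lemma} and reading the needed $D$-relations off in one step from the splitting axioms---whereas you chain (D2), (D3), and transitivity directly; both routes work, the former being a bit cleaner since each relation falls out of a single sector-membership check. For the inclusion $\Omega\setminus H(s)\subseteq L(s)\cup R(s)$ in part~(1), your sketch is in fact more complete than the paper's written argument: the paper's second paragraph of~(1), despite its announced aim, ends up re-proving the disjointness $H(s)\cap L(s)=\emptyset$ rather than the covering direction, so the coherence step you single out as the main obstacle is exactly the missing ingredient. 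To carry it out, show that if one triple gives the ``left'' relation $a_i x\,|\,a_j a_k$ and another gives the ``right'' relation $a_{i'} a_{j'}\,|\,x a_{k'}$, then overlapping them (using unboundedness on both ends) manufactures an $H_3$-witness, a contradiction; and to pass from ``uniformly left'' to $x\in L(s)$, given a sector $\Sigma$ meeting $s$ in an initial segment with $a_i\in\Sigma$ and $a_j\notin\Sigma$, pick $i'<i$ and play the splitting relation $a_{i'}a_i\,|\,x a_j$ against your hypothesis $x a_{i'}\,|\,a_i a_j$ to force $x\in\Sigma$.
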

\begin{proof}\phantom{ }
    \begin{enumerate}
        \item  Suppose $x \in L(s)$, we shall prove that $x$ is not in $H_1(s), H_2(s)$ or $H_3(s)$ with one argument:
    
        Let $i < j < k$, let $\mathcal{C}$ be a splitting of $\Omega$ which extends $\set{\set{a_i}, \set{a_j}, \set{a_k}}$ and let $\Sigma$ be the sector of $\mathcal{C}$ containing $a_i$, then $\Sigma \cap s$ is the initial segment $(- \infty, a_j)$ and so $x \in \Sigma$, but then $a_ix|a_ja_k$ which means $\set{i,j,k}$ cannot witness that $x$ is in $H_1$ or $H_2$, and if we add an $\ell > k$, then necessarily $a_\ell$ belongs to the same sector of $\mathcal{C}$ as $a_k$ and we have $a_ix|a_ja_\ell$ which means $a_ia_j|xa_\ell$ is impossible and so $\set{i, j, k, \ell}$ cannot witness that $x \in H_3$. Hence $x$ is not in any of the sets and so $x \not \in H(x)$.

        By an analogous argument, no element of $R(s)$ is in $H(s)$. 
    
        Now we must prove no element of $H(s)$ is in $L(s)$ or $R(s)$. For this, suppose $x \in H(s)$. If $x \in H_1$ or $x \in H_2$, then let $i < j < k$ witness it. If $x \in H_3$, let $i < j < j' < k$ witness it. In any case extend $\set{\set{a_i}, \set{x}, \set{a_k}}$ to a node splitting $\mathcal{C}$ of $\Omega$. if $i' < j$ then we always have $a_{i'}a_i | a_j a_k$ and
        \begin{itemize}
            \item if $x \in H_1(s)$ then $x$ and $a_j$ share a sector in $\mathcal{C}$ and so $a_{i'} a_i | x a_k$;
            \item if $x \in H_2(s)$ then the splitting $\mathcal{C}$ extends the splitting $\set{\set{a_i}, \set{a_j}, \set{a_k}, x}$ and so again $a_{i'} a_i | x a_k$
            \item if $x \in H_3(s)$ then $a_j$ is in the same sector as $a_i$ (since $a_i a_j|x a_k$) and so $a_{i'} a_i | x a_k$.
        \end{itemize}
        And so in all cases we have that $a_{i'}$ is in the same sector $\Sigma$ as $a_i$ and so $\Sigma \cap s = (-\infty, a_j)$, an initial segment, and $\Sigma$ does not include $x$ by construction, so we conclude that $x \not \in L(s)$, and by an analogous argument $x \not \in R(s)$ and we are done.

        \item Suppose $x \in L(s)$. Let $i < j < k$ and let $\mathcal{C}$ be a splitting which separates $a_i, a_j$ and  $a_k$, then the sector containing $a_i$ intersects $s$ in the initial segment $(-\infty, a_j)$ and so contains $x$, which means $x a_i |a_j a_k$ and so the augmented sequence is indeed monotonic. The proof for $x \in R(s)$ is analogous.

        \item Suppose $x \in H_1(s) \cup H_2(s)$ with witnesses $a_i < a_j < a_k$, then $\set{\set{a_i}, \set{a_j}, \set{a_k}}$ extends to a splitting such that (if $H \in H_1(s))$) $x$ shares a sector with $a_j$ or (if $x \in H_2(s))$ $x$ does not share a sector with any of the witnesses, and notice that by monotonicity the sector containing $a_i$ contains everything in $s$ lesser than $a_j$ and the sector containing $a_k$ contains everything in $s$ greater than $a_k$, so that the monotonicity condition holds on any tuple involving $x$ and three elements of $s$.

        \item Suppose $x \in H_3(s) \setminus (H_1(s) \cup H_2(s))$, with $a_i < a_j < a_k < a_\ell$ witnessing the fact that $x \in H_3(s)$. Extend $\set{\set{a_j}, \set{a_k}, \set{x}}$ to a splitting $\mathcal{C}$. If $j < m < k$, since $x \not \in H_1(s)$ then some $D$-relation holds on $\set{a_j, a_k, a_m, x}$, and since $x \not \in H_2(s)$ that relation cannot be $a_ja_k|xa_m$, so it is either $a_ja_m|a_kx$ or $a_k a_m|a_j x$. The former implies that $a_m$ is in the same splitting of $\mathcal{C}$ as $a_j$ and the latter that it is in the same splitting as $a_k$. So then the splitting $C$ contains two sectors $\Sigma_L$ and $\Sigma_R$ which together contain all of $s$, so one ($\Sigma_L$) intersects $s$ in an initial segment and the other ($\Sigma_R)$ in a final segment, creating, in effect, a Dedekind cut in $s$. We note that s cannot contain a supremum of $\Sigma_L$ or infimum of $\Sigma_R$ as such an element would necessarily be the $a_j$ that witnesses that $x \in H_1(s) \cup H_2(s)$, so that $x$ can be added to $s$ between $\Sigma_L$ and $\Sigma_R$ while preserving monotonicity.

        The condition that there are two elements less than $x$ and two elements greater than $x$ follows from the construction, since for any 4-tuple of witnesses the first two always end up less than $x$ and the last two end up greater than $x$.
    \end{enumerate}
\end{proof}

\begin{lemma}\label{single element type lemma for indiscernibility over a set}
    Let $s$ be an indiscernible sequence indexed by $I$ and let $B = \set{b_1, b_2, b_3}$ be a set disjoint from $H(s)$, then $D(a_ib_1; b_2b_3) \Leftrightarrow D(a_jb_1; b_2b_3)$ for all $i, j \in I$.
\end{lemma}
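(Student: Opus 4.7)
The plan is to split on the three cases for indiscernible $s$ from Lemma~\ref{Indiscernibles in one variable}, reducing each to the Sector Indifference Principle (Lemma~\ref{sector indifference principle}) by exhibiting a splitting $\mathcal{C}$ of $\Omega$ with $\mathcal{C}(B) \cap s = \emptyset$, or to a direct axiomatic derivation.

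The constant case is trivial. For $s$ petaled, let $\mathcal{C}$ be the full node splitting extending $\{\{a_\ell\} : \ell \in I\}$ used in Definition~\ref{disc_hull}. Then $H(s) = \mathcal{C}(s)$, so $B \cap H(s) = \emptyset$ forces each $b_k$ into a sector disjoint from $s$; hence $\mathcal{C}(B) \cap s = \emptyset$, and Sector Indifference yields $\qftp(a_i/B) = \qftp(a_j/B)$, which implies the equivalence.

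The monotonic case is the main obstacle. Lemma~\ref{Characterizing the hull of monotonic sequences}(1) gives $B \subseteq L(s) \cup R(s)$; partition $B$ into $B_L = B \cap L(s)$ and $B_R = B \cap R(s)$. The key technical step is to construct, for each $N \in I$, an edge splitting $\{\Sigma_1(N), \Sigma_2(N)\}$ of $\Omega$ with $\Sigma_1(N) \cap s = \{a_\ell : \ell < N\}$, by applying Lemma~\ref{splitting extension lemma} to a suitable four-element edge splitting in $s$ and checking via monotonic $D$-relations that every $a_\ell$ is forced into the correct extended sector. Two auxiliary relations then follow: (i) for $b, b'$ on the same side (both in $L(s)$ or both in $R(s)$) and any distinct $a_p, a_q \in s$, $D(bb'; a_p a_q)$ holds, since taking $N \leq \min(p, q)$ puts $b, b' \in \Sigma_1(N)$ and $a_p, a_q \in \Sigma_2(N)$; and (ii) for $b \in L(s), b' \in R(s)$ and distinct $a_p, a_q \in s$, $D(bb'; a_p a_q)$ fails, because any sector $\Sigma$ witnessing it would meet every $\Sigma_1(N)$ and every $\Sigma_2(N)$ nontrivially, so by Fact~\ref{basic facts}(4) one of the two is contained in $\Sigma$ for each $N$, and letting $N$ range over $I$ forces $\Sigma \supseteq s$, contradicting $a_p, a_q \notin \Sigma$.

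With (i) and (ii) in hand, the conclusion follows by cases on the distribution of $b_1, b_2, b_3$. If all three lie on the same side, (i) makes $\{B, \{a_m, a_{m+1}\}\}$ an edge splitting of the finite substructure $B \cup \{a_m, a_{m+1}\}$, which extends by Lemma~\ref{splitting extension lemma} to an edge splitting $\{\Sigma_B, \Sigma_s\}$ of $\Omega$; monotonicity together with (i) forces $s \subseteq \Sigma_s$, so Sector Indifference finishes. In the mixed case, the truth value of $D(a_i b_1; b_2 b_3)$ is pinned down directly: if $b_2, b_3$ lie on opposite sides, the same argument as for (ii) (the key point being that any sector containing $b_2, b_3$ must contain all of $s$) forces $D(a_i b_1; b_2 b_3)$ uniformly false; if $b_2, b_3$ lie on the same side with $b_1$ opposite, applying (D3) to the relation $D(b_2 b_3; a_i a_q)$ (from (i)) with variable $b_1$ yields $D(b_1 b_3; a_i a_q) \lor D(b_2 b_3; a_i b_1)$, the first disjunct being ruled out by (ii), so $D(a_i b_1; b_2 b_3)$ holds uniformly in $i$.
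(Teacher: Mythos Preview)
Your proof is correct, though a couple of your ``forced'' claims are really ``can be chosen'': in constructing $\{\Sigma_1(N),\Sigma_2(N)\}$ from a four-element edge splitting, an $a_\ell$ lying strictly between your four chosen indices is suitable for \emph{both} sectors, so the extension of Lemma~\ref{splitting extension lemma} does not force its placement; likewise in the all-same-side case, $s\subseteq\Sigma_s$ is obtained by consistently choosing $\Sigma_s$ whenever both sectors are suitable, not by uniqueness. These are wording issues, not gaps --- the constructions go through. Your parenthetical in the $b_2,b_3$-opposite-sides subcase is also slightly off: what the argument actually yields is that any sector containing $b_2,b_3$ but missing $a_i$ must contain $\Sigma_1(i)\cup\Sigma_2(i^+)\supseteq L(s)\cup R(s)$, hence contains $b_1$, which is the contradiction you need.

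The paper's proof follows the same three-way case split but leans on different machinery. In the petaled case it subdivides according to whether $b_1,b_2,b_3$ share a sector of $\mathcal{C}$, invoking the One Sector Lemma when they do; your single appeal to Sector Indifference is cleaner and avoids the subcase analysis entirely. Conversely, in the monotonic case the paper again uses the One Sector Lemma: when all three $b_k$ lie on the same side it takes the node splitting $\mathcal{C}'$ separating $b_1,b_2,b_3$ and shows (via any splitting whose sector meets $s$ in an initial segment) that all of $s$ lands in one sector of $\mathcal{C}'$. This sidesteps your explicit construction of the family $\{\Sigma_1(N),\Sigma_2(N)\}$ and the edge-splitting extension, making that case shorter. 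So the trade-off is that your approach is more uniform (Sector Indifference throughout) and more constructive, while the paper's is terser in the monotonic case by exploiting the One Sector Lemma directly.
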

\begin{proof}
    If the sequence $s$ is constant this is trivial. Let us consider the nontrivial cases. Suppose $a_i$ is an arbitrary element of $s$. We shall prove its quantifier-free type over $B$ is fixed.

    If $s$ is petaled, then let $\mathcal{C}$ be the splitting which separates its elements. Note that $a_i$ belongs to some sector of $\mathcal{C}$ and by hypothesis $B$ is disjoint from this sector. If $b_1, b_2, b_3$ do not all lie in one sector of $\mathcal{C}$, then the $D$-relation $D(a_i b_1; b_2 b_3)$ holds if and only if and only if $b_2$ and $b_3$ share a sector and $b_1$ does not lie on that sector. On the other hand, if all $b_j$'s lie in some sector $\Sigma$ of $\mathcal{C}$, then if $\mathcal{C}'$ is the splitting which extends $\set{\set{b_1}, \set{b_2}, \set{b_3}}$, clearly $\mathcal{C} \neq \mathcal{C}'$. Note that $b_1, b_2, b_3$ are contained in different sectors of $\mathcal{C}'$ but the same sector $\Sigma$ of $\mathcal{C}$, so by the One Sector Lemma (Lemma \ref{the one sector lemma}) $\Sigma$ contains all but one sectors of $\mathcal{C}'$ (including the ones containing $b_1, b_2$ and $b_3$), and dually the remaining sector $\Sigma$' of $\mathcal{C}'$ contains all but one sectors of $\mathcal{C}$ (and the exception is $\Sigma$, a sector containing no elements of $s$). So we conclude $\Sigma'$ contains the entire sequence $s$, which implies that $\qftp(a_i/B)$ is fixed as $i$ varies, which implies the result.

    Now if $s$ is monotonic, if $b_1, b_2$ and $b_3$ are split among the left and right frontiers of $s$, as defined in Lemma \ref{Characterizing the hull of monotonic sequences}(1), then it is straightforward that $D(a_ib_1; b_2b_3)$ holds if and only if $b_2, b_3$ are in the same frontier and $b_1$ is in the other one (i.e. $b_1 \in L(s)$, $b_2, b_3 \in R(s)$ or vice-versa). On the other hand if $b_1, b_2, b_3$ are all in the left frontier (or the right one), then again let $\mathcal{C}'$ be a splitting which separates $b_1, b_2$ and $b_3$. Note that for any splitting $\mathcal{C}$, if $\mathcal{C}$ has a sector that intersects $s$ on an initial segment $s_0$, then it contains $L(s)$ and thus contains $\set{b_1, b_2, b_3}$, so by an application of the One Sector Lemma identical to the previous case we conclude that every element of $s$ to the right of $s_0$ is in some particular sector $\Sigma$ of $\mathcal{C}$, and this sector $\Sigma$ does not depend on $s_0$, so that we once again conclude that all of $s$ is contained within one sector of $\mathcal{C}'$, which implies that $\qftp(a_i/B)$ is fixed, as before.
\end{proof}
\begin{remark}
    If $\Omega$ has quantifier elimination in some language consisting of $D$ and some unary predicates, the previous lemma implies all elements of $s$ have the same type over any set $B$ disjoint from $H(s)$. In particular this is the case in a dense, regular $D$-set.
\end{remark}

\begin{definition}
    Let $\Omega$ be a $D$-set, $B$ be an arbitrary subset of $\Omega$ and $s$ be a sequence (of singletons or tuples) indexed by $I$. We say $s$ is weakly indiscernible over $B$ if it is indiscernible for quantifier-free formulas with parameters from $B$. Equivalently, whenever $\phi(\overline{x})$ is a formula with parameters from $B$ consisting of a single $D$-relation, $\overline{a}_{\overline{i}}$ is a tuple from $s$ whose elements are indexed by a tuple $\overline{i}$ from $I$, and $\overline{i'}$ has the same quantifier-free order type as $\overline{i}$, then we have $\phi(\overline{a}_{\overline{i}}) \Leftrightarrow \phi(\overline{a}_{\overline{i'}})$, where $\overline{a}_{\overline{i'}}$ is $\overline{a}_{\overline{i}}$ replacing each index from $\overline{i}$ with the corresponding index from $\overline{i'}$. Call a $\phi$ satisfying this requirement order-invariant.
\end{definition}
    For instance, if $s = (a_i)_{i \in I}$ is a sequence of singletons then for $s$ to be weakly indiscernible over $B$ requires that the formula $\phi(x_1, x_2) = D(b_1b_2,x_1x_2)$ be order-invariant, where $b_1, b_2 \in B$. That is, whenever $i<j$, $i'<j'$ are indexes in $I$, then $D(b_1, b_2, a_i, a_j) \Leftrightarrow D(b_1, b_2, a_{i'}, a_{j'})$.

\begin{remark}
    If the theory of $\Omega$ admits quantifier elimination, then weak indiscernibility is equivalent to indiscernibility. Moreover, if $\Omega$ is augmented by unary predicates so that its theory has quantifier elimination, then $s$ is indiscernible over $B$ if and only if it is weakly indiscernible over $B$ and all elements of $s$ fulfill the same unary predicates.
\end{remark}

\begin{theorem}\label{characterizing indiscernibility in one element}
    Let $s = (a_i)_{i \in I}$ be an unbounded indiscernible sequence of singletons in a (colored) $D$-set, and let $B$ be a set, then  $s$ is weakly indiscernible over $B$ if and only if $B \cap H(s) = \emptyset$.
\end{theorem}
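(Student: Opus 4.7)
The plan is to prove both implications.

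$(\Leftarrow)$ Assume $B \cap H(s) = \emptyset$, and let $\phi$ be a single $D$-relation with parameters from $B$, say $\phi(\bar x)$ with $|\bar x| = k$. Since $D$ is 4-ary, we have $k \le 4$; I split by $k$. The cases $k = 0, 1, 4$ are, respectively, trivial, Lemma~\ref{single element type lemma for indiscernibility over a set}, and $\emptyset$-indiscernibility of $s$. For $k = 2, 3$ I further split on the type of $s$ (Lemma~\ref{Indiscernibles in one variable}): the constant case is trivial, and for petaled $s$ the claim follows from the Sector Indifference Principle (Lemma~\ref{sector indifference principle}) applied to the splitting $\mathcal{C}$ defining $H(s)$, since every $a_i$ lies in its own singleton $\mathcal{C}$-sector and so any $D$-relation involving a mix of $a$'s and $b$'s depends only on the $\mathcal{C}$-sector pattern of the $b$-parameters and the distinctness of the $a$'s.

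For monotonic $s$, Lemma~\ref{Characterizing the hull of monotonic sequences}(1) gives $B \subseteq L(s) \cup R(s)$. The core of the argument is a direct calculation: for each placement of $a$'s and $b$'s across the four slots of the $D$-relation and each assignment of each $b$ to $L(s)$ or $R(s)$, the truth value depends only on the relative order of the $a$-indices. This follows from three structural facts: (i) any path between two elements of $s$ stays inside the ``$s$-portion'' of the tree of $\Omega$; (ii) any path from an element of $s$ to an element of $L(s)$ (resp.\ $R(s)$) exits through the left (resp.\ right) end of the $s$-tree; and (iii) $L(s)$ and $R(s)$ are characterized via Lemma~\ref{Characterizing the hull of monotonic sequences}(1) as intersections of sectors meeting $s$ in initial, respectively final, segments. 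For instance, for the formula $D(x_1 x_2; b x_3)$ with $b \in L(s)$ and the $x_i$'s filled by elements of $s$ with distinct indices $p, q, r$, one checks that the relation holds iff $r < \min(p, q)$ — an order-invariant condition. The other patterns for $k = 2, 3$ are handled by analogous case-by-case verification.

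$(\Rightarrow)$ I argue the contrapositive. Suppose $b \in B \cap H(s)$; I exhibit a single $D$-formula with parameter $b$ that fails order-invariance on $s$. If $b = a_m \in s$ for some $m$, the formula $D(xx; bb)$ — equivalent by (D4) to $x \ne b$ — distinguishes $a_m$ from any other $a_i$. Otherwise $s$ is monotonic and $b \in H_j(s) \setminus s$ for some $j \in \{1, 2, 3\}$; Definition~\ref{disc_hull} in each subcase supplies witness indices making a specific $D$-relation involving $b$ take a determined truth value, and unboundedness of $s$ permits selecting another tuple of the same order type with indices shifted far away from the witness to flip the truth value, reflecting the fact that $b$'s attachment locus to the tree of $s$ is fixed.

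The main obstacle is organizing the case analysis for monotonic $s$ in the $(\Leftarrow)$ direction: one must enumerate the patterns for $k = 2$ and $k = 3$ (the distributions of $a$'s and $b$'s across the four slots combined with the $L/R$-assignment of the $b$'s) and verify order-invariance directly for each. Although each sub-case is an elementary computation in the style of my worked example, the bookkeeping is substantial.
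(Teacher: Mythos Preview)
Your $(\Leftarrow)$ direction is essentially the paper's approach, though for monotonic $s$ the paper avoids your brute-force enumeration by invoking Lemma~\ref{Characterizing the hull of monotonic sequences}(2): an element of $L(s)$ (resp.\ $R(s)$) can simply be appended to $s$ as a new minimum (resp.\ maximum) while preserving monotonicity, so a $D$-relation with one parameter from each frontier reduces to a $D$-relation on a monotonic sequence, and two parameters $b_1,b_2$ from the same frontier force $D(a_ia_j;b_1b_2)$ outright. This collapses your ``substantial bookkeeping'' to a couple of lines.

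There is, however, a genuine gap in your $(\Rightarrow)$ direction. You assert that if $b\in H(s)\setminus s$ then $s$ must be monotonic, but this is false. For petaled $s$, the splitting $\mathcal C$ in Definition~\ref{disc_hull} is the extension to all of $\Omega$ of the partition $\{\{a_i\}:i\in I\}$, so the sector of $\mathcal C$ containing $a_i$ is typically much larger than the singleton $\{a_i\}$; your phrase ``its own singleton $\mathcal C$-sector'' in the $(\Leftarrow)$ argument suggests the same misreading. Consequently $H(s)$ properly contains $s$ in general, and you have given no argument when $s$ is petaled and $b$ lies in the $\mathcal C$-sector of some $a_i$ without equalling $a_i$. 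The paper handles exactly this case: if $b$ shares the sector of $a_i$, then $D(a_ib;a_ja_k)$ holds for any $j,k\neq i$, whereas $D(a_{i'}b;a_ja_k)$ fails for $i'\notin\{i,j,k\}$ since all four elements then lie in distinct sectors; choosing $j,k$ so that $i$ and $i'$ have the same order type over $\{j,k\}$ exhibits the failure of order-invariance. Your formula $D(xx;bb)$ only covers the degenerate subcase $b=a_i$.
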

\begin{proof}

    Let $\phi(x)$ be a formula with parameters on $B$ consisting of a single $D$-relation. Notice that if $\phi$ uses no parameters from $B$ (i.e. is of the form $D(x_1x_2;x_3x_4)$) then it is order-invariant by the indiscernibility of $s$, and if $\phi$ is a sentence (i.e. is of the form $D(b_1b_2;b_3b_4)$) then it is trivially order-invariant. In addition, Lemma \ref{single element type lemma for indiscernibility over a set} implies that whenever $B \cap H(s) = \emptyset$ and $\phi$ has a single variable, (i.e. is of the form $D(x b_1; b_2 b_3)$ or some symmetric equivalent), then $\phi$ is order-invariant. Thus, we may assume $\phi$ has either two or three variables. Lastly, notice we may always assume the variables in $\phi$ are distinct since $D$-relations involving repeated elements are always determined by the axioms.

    \begin{itemize}
        \item Case 1: $s$ is a constant sequence

        Trivial.

        \item Case 2: $s$ is a petaled sequence.

        Let $\mathcal{C}$ be the splitting which separates the elements of $s$.

        Suppose that $B \cap H(s) = \emptyset$. That is to say, a given sector of $\mathcal{C}$ contains either a single element of $s$, some nonempty subset of $B$, or is disjoint from $s$ and $B$. If $\phi$ has three variables and a parameter $b$, then $\phi(a_i, a_j, a_k)$ is false for any choice of three distinct $i, j, k$, since all elements in $\set{a_i, a_j, a_k, b}$ are from different sectors of $\mathcal{C}$. If $\phi$ has two variables and two parameters $b_1, b_2$, then the only possible $D$-relation on $\set{a_i, a_j, b_1, b_2}$ for $i\neq j$ is $a_ia_j|b_1b_2$ which occurs if and only if $b_1, b_2$ share a sector of $\mathcal{C}$. Either way $\phi$ is order-invariant.

        On the other hand, if $B \cap H(s) \neq \emptyset$, let $b \in B \cap H(s)$, let $\Sigma$ be the sector of $\mathcal{C}$ in which $b$ is located and let $a_i$ be the element of $s$ in that sector. Pick $j, k$ different from $i$ such that there exists an $i' \neq i$ with the same quantifier-free order type over $\set{j,k}$ as $i$ (e.g. Pick $j, k$ both much bigger than $i$). Note now that $D(a_ib ; a_ja_k)$ holds (since $a_i$ and $b$ share a sector) but $D(a_{i'}b ; a_ja_k)$ does not (all elements are in different sectors). Thus this formula is not order-invariant.

        \item Case 3: $s$ is a monotonic sequence.

        Suppose that $B \cap H(s) = \emptyset$, that is to say all elements of $B$ are contained either in $L(s)$ or $R(s)$, as defined in Lemma \ref{Characterizing the hull of monotonic sequences}(1). By Lemma \ref{Characterizing the hull of monotonic sequences}(2) if the parameters of $\phi$ contain at most one element of $B \cap L(s)$ and at most one element of $B \cap L(s)$, the truth value of $\phi(\overline{x})$ is determined by simply adding (if present) these elements of $L(s)$ and $R(s)$ to $s$ as minimum and maximum elements of $s$, respectively. By a similar logic, if there are exactly two parameters in $B \cap L(s)$, say $b_1$ and $b_2$, then $D(a_i a_j; b_1 b_2)$ holds always (and everything it implies by (D1) and (D2)). We have an analogous result for exactly two elements of $B \cap R(s)$. This covers all possible cases for a $\phi$ of two or three variables, so $\phi$ is order-invariant.

        On the other hand, if $B \cap H(s) \neq \emptyset$, then let $b \in B \cap H(s)$. If $b \in H_1(s) \cup H_2(s)$ then by Lemma \ref{Characterizing the hull of monotonic sequences}(3) we may replace an element $a_i$ of $s$ with $b$, but then the truth value of $D(b, a_j, a_k, a_\ell)$ will change based on which, if any, of the $j, k$ or $\ell$ are equal to $i$. Explicitly, if we choose $i', j,k \in I$ such that $j < i' < i < k$, then $a_j a_{i'} | b a_k$ but if $b \in H_1(s)$ then $a_ja_k|a_ib$ and if $b \in H_2(s)$ then no $D$-relations hold on $\set{b, a_i, a_j, a_k}$. That is to say $i'$ cannot be replaced by $i$ in either case which means $D(x_1x_2;bx_3)$ is not order-invariant. On the other hand if $b \in H_3(s) \setminus (H_1(s) \cup H_2(s))$, then by Lemma \ref{Characterizing the hull of monotonic sequences}(4) we may add $b$ into the middle of $s$ as a new element with index $i_b$, and if $i < j < i_b < k < \ell$, we have that $a_i b | a_k a_\ell$ but not $a_i b | a_j a_\ell$ which again means $D(x_1b;x_2x_3)$ is not order-invariant. 
    \end{itemize}
    We conclude that, indeed, $s$ is weakly invariant over $B$ if and only if $H(s) \cap B = \emptyset$.
\end{proof}

\begin{corollary}
    If the theory of $\Omega$ (possibly after augmenting with unary predicates) has quantifier elimination, then an indiscernible sequence of singletons $s$ is indiscernible over a set $B$ if and only if $H(s) \cap B = \emptyset$. In particular this holds in a dense, regular $D$-set.
\end{corollary}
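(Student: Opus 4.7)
The plan is to extract the corollary essentially as a direct consequence of the previous theorem, using the remark immediately preceding it. I would first reduce indiscernibility to weak indiscernibility, then apply Theorem~\ref{characterizing indiscernibility in one element}.

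First I would note that under quantifier elimination, every formula $\phi(\overline{x})$ with parameters from $B$ is equivalent to a boolean combination of atomic formulas (i.e. $D$-relations, equalities, and, in the colored setting, unary predicates). Hence full indiscernibility of $s$ over $B$ is equivalent to weak indiscernibility of $s$ over $B$ together with the requirement that the atomic unary-predicate type of $a_i$ over $\emptyset$ be the same for all $i \in I$. But the latter is automatic from the assumption that $s$ is indiscernible over $\emptyset$ in the original structure, so the colored case reduces to checking weak indiscernibility over $B$, exactly as in the remark just before Theorem~\ref{characterizing indiscernibility in one element}.

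Then I would invoke Theorem~\ref{characterizing indiscernibility in one element} directly: weak indiscernibility of $s$ over $B$ holds if and only if $H(s) \cap B = \emptyset$. Combining both reductions gives the forward and backward implications of the corollary. For the final sentence, I would cite Corollary~\ref{Quantifier Elimination}, which tells us that every dense regular $D$-set has quantifier elimination in the pure language of $D$, so the hypothesis of the corollary is met and the characterization applies.

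There is no real obstacle here; the technical content lies entirely in Theorem~\ref{characterizing indiscernibility in one element}, so the corollary is a bookkeeping step. The only subtlety to flag clearly in the write-up is that the unary-predicate condition is already built into the assumption that $s$ is indiscernible (over $\emptyset$), so no additional hypothesis on $B$ is needed to upgrade weak indiscernibility to full indiscernibility.
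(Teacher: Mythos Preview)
Your proposal is correct and matches the paper's approach exactly: the paper states this corollary without proof, relying implicitly on the Remark (that under quantifier elimination, indiscernibility over $B$ coincides with weak indiscernibility over $B$ plus agreement of unary predicates) together with Theorem~\ref{characterizing indiscernibility in one element} and Corollary~\ref{Quantifier Elimination}. Your write-up simply makes explicit what the paper leaves to the reader.
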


Now we begin to generalize to indiscernible sequences of $n$-tuples. Recall that the discernible hull of a general indiscernible sequence is the union of the hulls of its constituent sequences. We shall first deal with the particular case that the component sequences have pairwise-disjoint hulls, which is a simple corollary:

\begin{corollary}\label{Characterizing indiscernibility in multiple variables with disjoint hulls}
    Let $\Omega$ be a $D$-set. Let $s = (s^1, \dots, s^n)$ be an ($n$-element) unbounded indiscernible sequence such that $H(s^i) \cap H(s^j) = \emptyset$ for $i \neq j$, and let $B$ be a set, then $s$ is weakly indiscernible over $B$ if and only if $B \cap H(s) = \emptyset$.
\end{corollary}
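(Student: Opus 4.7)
The plan is to reduce everything to the single-component Theorem~\ref{characterizing indiscernibility in one element}, first by showing that each component $s^k$ is weakly indiscernible over $B$ together with all the other components, and then by changing the tuple indices from $\bar{i}$ to $\bar{i}'$ one component at a time in a ``hybrid'' style.

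Direction $(\Rightarrow)$ is immediate from the previous theorem: if $B \cap H(s^k) \neq \emptyset$ for some $k$, then Theorem~\ref{characterizing indiscernibility in one element} produces a single $D$-relation $\phi$ with parameters from $B$ and two index tuples of the same order type on which $\phi$ takes different truth values when applied to $s^k$. Relabelling the variables of $\phi$ so that each points to the $k$-th coordinate slot of a full tuple from $s$ then yields the same failure of weak indiscernibility for $s$ over $B$.

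For $(\Leftarrow)$, the central intermediate step will be to prove that each $s^k$ is weakly indiscernible over $B_k := B \cup \bigcup_{j \neq k} s^j$. By Theorem~\ref{characterizing indiscernibility in one element} this reduces to checking $H(s^k) \cap B_k = \emptyset$. Disjointness of $H(s^k)$ with $B$ holds by hypothesis. For each $j \neq k$ I would split cases on $s^j$: if $s^j$ is petaled or monotonic then $s^j \subseteq H(s^j)$ by direct inspection of Definition~\ref{disc_hull}, so the disjoint-hull hypothesis gives $s^j \cap H(s^k) = \emptyset$; if $s^j$ is the constant sequence with value $c$, then $H(s^j) = \emptyset$ and the hypothesis is vacuous, but the $\emptyset$-indiscernibility of $s$ places $c$ in every tuple, making $s^k$ automatically $\{c\}$-indiscernible, so that Theorem~\ref{characterizing indiscernibility in one element} forces $c \notin H(s^k)$.

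With the intermediate step in hand, given any single $D$-relation $\phi$ with parameters from $B$ and order-identical index tuples $\bar{i}, \bar{i}'$, I would pass from $\phi(\bar{a}_{\bar{i}})$ to $\phi(\bar{a}_{\bar{i}'})$ in $n$ hybrid sub-steps, changing at sub-step $k$ only the positions of $\phi$ that reference component $k$. Freezing all other positions (parameters from $B$ and entries from components $\neq k$, the latter using already-updated indices) as parameters in $B_k$, sub-step $k$ becomes an instance of weak indiscernibility of $s^k$ over $B_k$ applied to the subtuple of indices appearing in the $k$-th component slots, whose order type is preserved as a subtuple of $\bar{i}$ and $\bar{i}'$. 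The main obstacle I anticipate is the constant-component case of the central step, since the disjoint-hull hypothesis is vacuous there and must be replaced by an appeal to the $\emptyset$-indiscernibility of $s$; once that is handled, the rest is routine bookkeeping.
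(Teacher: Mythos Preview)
Your proposal is correct and follows essentially the same approach as the paper: reduce to Theorem~\ref{characterizing indiscernibility in one element} to see that each $s^k$ is weakly indiscernible over $B$ together with the other components, then pass from $\bar{i}$ to $\bar{i}'$ by shifting indices one component at a time. Your explicit treatment of the constant-component case (where the disjoint-hull hypothesis is vacuous and one must instead appeal to the $\emptyset$-indiscernibility of $s$ to get $c \notin H(s^k)$) is a detail the paper's proof passes over in silence.
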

\begin{proof}
    The necessity of the condition follows immediately from Theorem \ref{characterizing indiscernibility in one element}, since if $B \cap H(s) \neq \emptyset$, then $B \cap H(s^i) \neq \emptyset$ for some $i$ and failure of weak indiscernibility follows.

    For sufficiency, it suffices to notice that under the hypotheses, each sequence $s^i$ is weakly indiscernible over $B$ and over the union of all other $s^j$'s by Theorem \ref{characterizing indiscernibility in one element}, so for any $\phi$ with parameters from $B$ which consists of a single $D$-relation, we may check that $s$ is order-invariant by simply shifting the elements component-by-component. That is, changing the indices of elements in $s^1$, then in $s^2$, and so on. By indiscernibility of each $s^i$ over the others and over $B$ this will preserve truth value at each stage, and at the end we will have ensured $\phi$ is order-invariant.
\end{proof}
\begin{remark}
    As before, in the presence of quantifier elimination this result may be strengthened to $s$ being indiscernible over $B$.
\end{remark}

The case of a general $n$-element indiscernible sequence will require a little more work, since there are many meaningfully-different ways in which its component hulls can fail to be disjoint. We shall build tools that allow to bypass this problem.

\begin{lemma}\label{Hulls are disjoint or identical}
    Suppose $s = (s^1, s^2)$ is an unbounded indiscernible sequence in a (colored) D-set, then either $H(s^1) \cap H(s^2) = \emptyset$ or  $H(s^1) = H(s^2)$. In the latter case $s^1$ and $s^2$ are both constant, both petaled or both monotonic.
    
    Additionally. If $s_1$ is petaled and $\mathcal{C}$ is the splitting which separates its elements, exactly one of the following holds:
    \begin{itemize}
        \item $H(s^1) = H(s^2)$, $\mathcal{C}$ also separates the elements of $s^2$ and $a^1_i$ shares a sector of $\mathcal{C}$ with $a^2_j$ if and only if $i = j$; or
        \item $H(s^1) \cap H(s^2) = \emptyset$, $s^2$ is petaled and $\mathcal{C}$ separates its elements; or
        \item $H(s^1) \cap H(s^2) = \emptyset$ and $H(s^2)$ is entirely contained within a single sector of $\mathcal{C}$.
    \end{itemize}
\end{lemma}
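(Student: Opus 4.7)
The proof proceeds by case analysis on the types of $s^1$ and $s^2$ --- each is constant, petaled, or monotonic by Lemma~\ref{Indiscernibles in one variable}. The essential tool throughout is pair-indiscernibility: any two increasing index tuples of the same length yield pair-tuples with the same quantifier-free type. For the first part (dichotomy of hulls), the cases where some component is constant are trivial since then one of the hulls is empty. Otherwise I would show that if $H(s^1) \cap H(s^2) \ne \emptyset$, picking $b$ in the intersection and applying Theorem~\ref{characterizing indiscernibility in one element} together with shift-invariance locates $b$'s sector membership uniformly against both sequences, forcing $H(s^1) = H(s^2)$. For the matching-types claim, one notes that a petaled hull is a disjoint union of sectors of a single node splitting, while a monotonic hull has the richer frontier structure of Lemma~\ref{Characterizing the hull of monotonic sequences}, so the two cannot coincide.

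For the second part, fix $s^1$ petaled and let $\mathcal{C}$ be the splitting separating its elements. By pair-indiscernibility, the sector of $\mathcal{C}$ containing $a^2_i$ is determined by a shift-invariant rule, leaving two cases: (A) $a^2_i$ shares a sector of $\mathcal{C}$ with $a^1_{i+k}$ for a fixed integer $k$; (B) $a^2_i$ lies in a sector of $\mathcal{C}$ disjoint from $s^1$. The heart of (A) is showing $k=0$. Assume $k>0$. The pair-tuples $(\mathrm{pair}_0, \mathrm{pair}_k)$ and $(\mathrm{pair}_0, \mathrm{pair}_{k+1})$ are both strictly increasing of length two, so by indiscernibility they have the same quantifier-free type. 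In the first, $a^2_0$ and $a^1_k$ share a common sector of $\mathcal{C}$ while $a^1_0$ and $a^2_k$ lie in two distinct other sectors; axiom~(1) of a splitting then forces $D(a^2_0\, a^1_k;\, a^1_0\, a^2_k)$. In the second, the four elements $a^1_0, a^2_0, a^1_{k+1}, a^2_{k+1}$ occupy four distinct sectors of $\mathcal{C}$, so axiom~(2) forbids any $D$-relation on them. This contradicts indiscernibility unless $k=0$, so (A) collapses to case~1 of the lemma.

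Case (B) is handled by a parallel shift-invariance argument applied to pairs $(a^2_i, a^2_j)$: either all $a^2_i$ share a single external sector $\Sigma^\ast$ of $\mathcal{C}$, giving $H(s^2) \subseteq \Sigma^\ast$ (case~3), or the $a^2_i$ occupy pairwise-distinct external sectors. In the latter subcase $\mathcal{C}$ separates $s^2$; monotonicity is then impossible since axiom~(2) forbids $D$-relations among four $a^2_i$'s in distinct sectors of $\mathcal{C}$, so $s^2$ must be petaled and the hulls are disjoint, yielding case~2. The main obstacle I anticipate is making the shift-invariance fully precise --- rigorously reducing the a priori many possible sector-assignment rules to the single parameter $k$ --- and handling case (B) cleanly when $s^2$ is monotonic inside $\Sigma^\ast$, where the hull must be computed within the containing sector rather than via $\mathcal{C}$.
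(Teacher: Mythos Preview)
Your overall plan---case analysis on the component types, with the petaled case carrying most of the weight---matches the paper's approach. The main divergence, and the source of the obstacle you flag, is the ``shift by $k$'' framing in case (A). That formulation presupposes a $\mathbb{Z}$-like index set and is the wrong abstraction for a general unbounded order $I$. The cleaner observation, which the paper uses and which dissolves your obstacle entirely, is that the relation ``$a^2_i$ shares a $\mathcal{C}$-sector with $a^1_j$'' is expressible by a quantifier-free formula in the pair sequence (namely $D(a^2_i a^1_j; a^1_m a^1_n)$ for suitable $m,n$), hence by indiscernibility depends only on the order type of $(i,j)$---that is, only on whether $i<j$, $i=j$, or $i>j$. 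The cases $i<j$ and $i>j$ are immediately contradictory: fix $i$ and vary $j$ among two values with the same order relation to $i$, and $a^2_i$ lands in two distinct sectors. So if the relation ever holds it holds exactly when $i=j$, with no parameter $k$ to eliminate. The same order-type reduction handles your case (B) dichotomy cleanly.

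Two further points. First, your appeal to Theorem~\ref{characterizing indiscernibility in one element} for the general hull dichotomy does not do the work you claim: knowing $b\in H(s^1)\cap H(s^2)$ tells you only that neither component is weakly indiscernible over $\{b\}$, which by itself does not force the hulls to coincide. The paper derives the dichotomy as a byproduct of the case analysis rather than as a separate preliminary; in particular, for the monotonic--monotonic case it observes that ``some $x$ lies between $a^1_i,a^1_\ell$ and also between $a^2_j,a^2_m$'' is a first-order condition on the pair sequence, so by indiscernibility it holds for one increasing index pattern iff for all, yielding $H(s^1)=H(s^2)$ directly. Second, your argument that a petaled hull and a monotonic hull cannot coincide \emph{as sets} (``richer frontier structure'') is not justified and is not how the paper proceeds: the matching-types conclusion falls out of the case analysis (in the petaled case you show $s^2$ is petaled whenever the hulls agree) rather than from comparing the shapes of the two hulls after the fact.
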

\begin{proof}
    We shall proceed by cases based on the nature of the sequences.

    \begin{itemize}

        \item Case 1: One of the sequences is constant.

        Trivially $H(s^1) \cap H(s^2) = \emptyset$.

        \item Case 2: One of the sequences, say $s^1$, is petaled. The other is not constant.

        Let $\mathcal{C}$ be the splitting which separates the elements of $s^1$. Notice that the statement ``$x$ and $y$ are in the same sector of $\mathcal{C}$'' is expressible as a first-order sentence with parameters from $s^1$. Namely, if $a_i^1, a_j^1$ are arbitrary, distinct elements of $s^1$, then the above sentence is defined by $D(xy;a_i^1a_j^1)$. 
        
        (Note: This is not quite correct, as it assumes the sector that $x$ and $y$ share is not one of the sectors which $a_i^1$ or $a_j^1$ occupy. This can be solved by adding two new elements $a_{i'}^1$, $a_{j'}^1$ distinct from the previous and letting the sentence be $D(xy;a_i^1a_j^1) \vee D(xy;a_{i'}^1a_{j'}^1)$. For the next part of the proof, however, we will use the simpler version and simply assume without loss of generality that $x$ and $y$ do not belong to one of the two problematic sectors)

        Let us first suppose $s_2 \cap H(s^1) \neq \emptyset$. That is, there is an $a_k^2 \in s^2$ which lies in the same sector of $\mathcal{C}$ as some $a_\ell^1 \in s^1$. We claim that then $\ell = k$. Suppose otherwise and pick $i,j \in I$ both distinct from $k$ and $\ell$ such that there is an $\ell' \neq \ell$ with the same quantifier-free order type over $\set{i, j, k}$ as $\ell$ (e.g. pick $i,j$ much bigger than $k, \ell$). We then have $D(a_k^2 a_\ell^1;a_i^1 a_j^1)$ and by indiscernibility we may change $\ell$ to $\ell'$, but then we are claiming $D(a_k^2 a_{\ell'}^1;a_i^1 a_j^1)$ which states $a_k^2$ and $a_{\ell'}^1$ share a sector of $\mathcal{C}$ which is absurd since $a_{\ell}^1$ and $ a_{\ell'}^1$ clearly don't. We conclude that indeed $\ell = k$, so that we have $D(a_k^2 a_k^1;a_i^1 a_j^1)$ (for \textit{any} choice of $i, j \neq k$). Then by indiscernibility this sentence is in fact true for any choice of $k$, and so we conclude that $a_k^1$ and $a_k^2$ always share a sector, that is $\mathcal{C}$ separates the elements of $s^2$ as well as $s^1$ and so $s^2$ is in fact petaled with the same hull as $s^1$. That is to say we fulfill the first possibility in the last part of the statement.

        Now let us instead suppose that $s_2 \cap H(s^1) = \emptyset$. Take three arbitrary distinct elements from $s^2$ and let $\mathcal{C}'$ be a splitting which separates them. If $\mathcal{C} \neq \mathcal{C}'$ then by the One Sector Lemma all but one sectors of $\mathcal{C}'$ are contained within one sector $\Sigma$ of $\mathcal{C}$. We claim this sector cannot be part of $H(s^1)$, since if it were there would be an $a_i^1 \in s^1$ which shares a sector of $\mathcal{C}$ with infinitely many elements of $s^2$, but as before this cannot be since indiscernibility would imply we can change the $i$ for some other index which is impossible, so that all of $s_2$ is contained within one sector of $\mathcal{C}$ which is not in $H(s^1)$, and it quickly follows from the definition of the hull and the One Sector Lemma that all of $H(s^2)$ is contained in this sector, so that $H(s^1) \cap H(s^2) = \emptyset$ and we fulfill the third possibility in the last part of the statement.

        On the other hand if $\mathcal{C} = \mathcal{C}'$ then we may assume without loss of generality that $s^2$ is petaled, since if it were monotonic we may pick our three elements differently to ensure $\mathcal{C} \neq \mathcal{C}'$ and repeat the previous argument, and under this supposition we now have that $\mathcal{C}'$ separates the elements of $s^2$ and so $H(s^1)$ and $H(s^2)$ are formed from the unions of two disjoint sets of sectors from one splitting and so $H(s^1) \cap H(s^2) = \emptyset$. That is we fulfill the second possibility in the last part of the statement.

        \item Case 3: Both sequences are monotonic.

        Recall that all elements of the hull of a monotonic sequence $s$ are in $H_3(s)$, that is if $x \in H(s)$ then there are $i < j < k < \ell$ such that $a_ix|a_ka_\ell$ and $a_ia_j|xa_\ell$. Let us say in such a case that $x$ is between $a_i$ and $a_j$. Note that to say that there is some element $x$ between some pair $a^1_i, a^1_j$ and also between some other pair $a^2_k, a^2_\ell$ is a first order sentence, so if there is any intersection between the hulls, then this first-order sentence will hold for those four elements, and so it will hold for \textit{any} four elements with the same order type, which readily implies that the hulls of $s^1$ and $s^2$ are identical.

    \end{itemize}
\end{proof}

Now we can prove the main theorem of this section:

\begin{theorem}\label{Indiscernibility over a set full characterization}
    Let $\Omega$ be a $D$-set. Let $s = (s^1, \dots, s^n)$ be an ($n$-element) unbounded indiscernible sequence indexed by $I$ on a (colored) $D$-set, and let $B$ be a set, then $s$ is weakly indiscernible over $B$ if and only if $B \cap H(s) = \emptyset$.
\end{theorem}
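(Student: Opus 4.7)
The plan has two directions. \textbf{Necessity} is immediate from Theorem~\ref{characterizing indiscernibility in one element}: if $b \in B \cap H(s)$, then $b \in H(s^i)$ for some component $i$, and so $s^i$ (and therefore $s$) already fails to be weakly indiscernible over $B$.

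For \textbf{sufficiency}, the idea is to reduce the general case to the disjoint-hulls case of Corollary~\ref{Characterizing indiscernibility in multiple variables with disjoint hulls} by grouping together components with equal hulls. Applying Lemma~\ref{Hulls are disjoint or identical} to every pair of component sequences, the relation $H(s^i) = H(s^j)$ is an equivalence relation on $\{1,\dots,n\}$ whose equivalence classes $G_1, \dots, G_k$ have pairwise disjoint associated hulls $H_1, \dots, H_k$. Since each nonconstant component sequence is contained in its own hull, for $i \in G_a$ and $j \in G_b$ with $a \neq b$ we have $s^j \cap H_a = \emptyset$; combined with the hypothesis $B \cap H(s) = \emptyset$, this yields $\left(B \cup \bigcup_{b \neq a} s^{G_b}\right) \cap H_a = \emptyset$ for every~$a$.

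The key ingredient I would then isolate is the following intermediate claim: \emph{if $\sigma$ is an unbounded indiscernible sequence of tuples all of whose component sequences share a common hull $H$, and $C \cap H = \emptyset$, then $\sigma$ is weakly indiscernible over $C$.} Granting this claim for each meta-sequence $s^{G_a}$, the theorem follows by the same ``shift one piece at a time'' argument as in the proof of Corollary~\ref{Characterizing indiscernibility in multiple variables with disjoint hulls}: given order-equivalent index tuples $\vec{\imath}, \vec{\jmath}$ and a formula $\phi$ over $B$ consisting of a single $D$-relation, we transform $\vec{\imath}$ into $\vec{\jmath}$ by successively shifting the indices of one group $G_a$ at a time, invoking the intermediate claim at each step on the group being shifted while treating the other groups' elements together with $B$ as the parameter set $C$, which is disjoint from $H_a$ by the observation above.

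To prove the intermediate claim, I would split into cases according to the common type of the component sequences. The \emph{constant} case is trivial. In the \emph{petaled} case, iterating Lemma~\ref{Hulls are disjoint or identical} produces a single node splitting $\mathcal{C}$ separating the elements of every component, with $\sigma^j_i$ and $\sigma^{j'}_i$ always lying in a common sector of $\mathcal{C}$ depending only on $i$, and with different indices giving different sectors; any $D$-relation with parameters from $C$ then reduces to a question about sector membership, which is determined by the indices of the sequence variables together with the fixed sector-assignment of the $C$-parameters (which do not touch the sectors occupied by $\sigma$ since $C \cap H = \emptyset$). The \emph{monotonic} case is the anticipated main obstacle: here the components interleave along a common tree-like backbone encoded by $H$, and the plan is to extend the frontier analysis of Lemma~\ref{Characterizing the hull of monotonic sequences}(1) and Lemma~\ref{single element type lemma for indiscernibility over a set} to the multi-component setting, showing that elements of $C$ decompose into a generalized left and right frontier of the joint sequence, so that the truth value of any $D$-relation over $C$ is again fully determined by the order types of the sequence indices involved together with the frontier assignments of the $C$-parameters.
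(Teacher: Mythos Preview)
Your plan is essentially the paper's proof, reorganized: the paper also reduces to the case where the variables in a single $D$-relation formula come from components with a common hull and then does a petaled/monotonic case split, and its monotonic case is exactly your ``replace each $c\in C$ by a sequence element at an extreme index and invoke indiscernibility over $\emptyset$'' idea. Two points deserve attention.

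First, a small but genuine gap in the reduction. Your justification for $s^j\cap H_a=\emptyset$ (``each nonconstant component is contained in its own hull'') says nothing when $s^j$ is constant; a constant component has empty hull, so it lands in its own group $G_b$, yet its value $c$ could in principle sit inside $H_a$. You need to rule this out explicitly: since $(s^i,s^j)$ is indiscernible and $s^j$ is constant, $s^i$ is indiscernible over $\{c\}$, whence $c\notin H(s^i)=H_a$ by Theorem~\ref{characterizing indiscernibility in one element}. Without this observation the hypothesis $C\cap H_a=\emptyset$ of your intermediate claim is not established.

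Second, in the petaled branch of the intermediate claim, the slogan ``reduces to sector membership'' is not quite enough. When all sequence variables in the $D$-relation happen to share the same index $i$ (e.g.\ $D(x^1_i x^2_i; x^3_i\, c)$), three of the four arguments live in the same sector $\Sigma_i$ and the truth value depends on structure \emph{inside} $\Sigi_i$, not merely on which sector each argument occupies. The paper closes this by invoking the Sector Indifference Principle (Lemma~\ref{sector indifference principle}) to replace the outside parameter $c$ by some $a^1_{i'}$ with $i'\neq i$, after which indiscernibility over $\emptyset$ finishes the job; you should do the same. For the monotonic branch, your frontier plan is correct, and the paper's implementation is precisely to substitute each $c\in L(s^1)$ (resp.\ $R(s^1)$) by an element $a^1_j$ with $j$ chosen smaller (resp.\ larger) than every index already appearing, check this preserves the truth value via Lemma~\ref{Characterizing the hull of monotonic sequences}(2), and then appeal to indiscernibility of $s$ over $\emptyset$.
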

\begin{proof}
    Necessity follows directly from Corollary \ref{Characterizing indiscernibility in multiple variables with disjoint hulls}.

    We shall prove sufficiency similarly to \ref{characterizing indiscernibility in one element}. We assume $B \cap H(s) = \emptyset$ and, as in that proof, we shall check the order-invariance of a formula $\phi$ with parameters from $B$ consisting of a single $D$-relation, and by the same arguments as before assume $\phi$ has two or three variables which are always distinct. 
    
    For convenience, we shall treat formulas which pull from different sequences as different. That is to say. We consider a $D(x_1 b_1;b_2b_3)$ where $x_1$ is replaced by an element of $s^1$ to be a distinct formula from a $D(x_1 b_1;b_2b_3)$ where $x_1$ is replaced by an element of $s^2$. To denote this, we enrich the variables with superscripts that denote which sequence they may take values in, so the former formula is denoted by $D(x^1_1 b_1; b_2b_3)$ and is to be seen as distinct from the latter which is denoted by $D(x^2_1b_1;b_2b_3)$. We will always assume that at least two of the variables of $\phi$ are from different component sequences, say $s^1$ and $s^2$, such that $H(s^1)$ and $H(s^2)$ are not disjoint (and hence in fact $H(s^1) = H(s^2)$ and they are of the same type by Lemma \ref{Hulls are disjoint or identical}). We may do this since if this doesn't happen, Corollary \ref{Characterizing indiscernibility in multiple variables with disjoint hulls} yields order-invariance immediately.

    We shall denote the shared hull of $s^1$ and $s^2$ by $H$ and we will bifurcate on whether the sequences $s^1$ and $s^2$ are constant, petaled or monotonic.

    \begin{itemize}
        \item Case 1: $s^1$ and $s^2$ are constant.

        Trivial.

        \item Case 2: $s^1$ and $s^2$ are petaled.

        Let $\mathcal{C}$ be the splitting which separates the elements of $s^1$ (or equivalently, of $s^2$). 

        First let us deal with the case where $\phi$ has two variables and two parameters $b, b' \in B$, that is $\phi$ is a $D$-relation on $\set{x_1^1, x_2^2, b, b'}$ where recall the superscripts denote which $s^i$ the variables belong to. Notice that $x_1^1$ and $x_2^2$ can never share a sector of $\mathcal{C}$ with $b$ or $b'$. So for a given choice of $x_1^1 = a^1_i, x_2^2 = a_j^2$, the only $D$-relation which can hold on this set is $a_i^1 a_j^2 | b b'$ which occurs if and only if $b$ and $b'$ share a sector of $\mathcal{C}$ or if $a_i^1$ and $a_j^2$ do (i.e. if $i = j$ by Lemma \ref{Hulls are disjoint or identical}). In either case, the formula can be seen to be order-invariant.

        Now suppose $\phi$ has three variables and a parameter $b$. By hypothesis one variable is in $s^1$ and another in $s^2$. The third is in some $s^m$ (possibly $m = 1$ or $2$). Notice that if $H(s^m) \neq H$, then we may repeat the previous argument replacing $b$ for a given $a_m^k \in s^m$. By Lemma \ref{Hulls are disjoint or identical} either all of $s^m$ is contained within a single sector of $\mathcal{C}$ so that $b'$ sharing a sector of $\mathcal{C}$ with $a_k^m$ is independent of $m$, or $s^m$ is a petaled sequence which is also separated by $\mathcal{C}$, so that $b$ and $a_k^m$ can never share a sector since such a sector would be a part of $H(s^m)$. Either way the same conclusion follows.
        
        So we may assume $H(s^m) = H$. That is, all three variables are in petaled sequences with the same hull. Notice that $b$ is not in the same sector of $\mathcal{C}$ as any of the variables, so by the Sector Indifference Principle (Lemma \ref{sector indifference principle}) we may replace it with some element of $s^1$ with the same property. That is, for a given choice of $x_1^1 = a^1_i, x_2^2 = a_j^2, x_3^m = a_k^m$, if $a_{\ell}^1$ is chosen to not share a sector with any of the previous elements (i.e. if $\ell$ is chosen different from $i, j, k$), then $\phi(a^1_i, a^2_j, x_3^m; b) \Leftrightarrow \phi(a^1_i, a^2_j, x_3^m; a_{\ell}^1)$. The latter formula is order-invariant by the indiscernibility of $s$ over $\emptyset$, so it follows that the former is too. This covers all cases for petaled $s^1$ and $s^2$.

        \item Case 3: $s^1$ and $s^2$ are monotonic.

        Fix values for the variables in $\phi$.

        Any parameter $b$ of $\phi$ and any variable in an $s^k$ whose hull is not $H$ is either in $L = L(s^1)$ or $R = R(s^1)$ as defined in Lemma \ref{Characterizing the hull of monotonic sequences}(1). Replace any such element in $L$ with $a_j^1$ where $j$ is some index in $I$ smaller than any index which has appeared already, and replace any in $R$ with $a_k^1$ where $k$ is some index \textit{larger} than any which has appeared already. These steps are to be performed in sequence, so that if two elements are in $L$ (or $R$), then their replacements have different indexes.

        After these replacements, all the $x_i$ are in $s$. It is easy to check that this new formula has the same truth value as the previous and that if it is order-invariant, then so is the original, order-invariance of the new formula follows immediately from the fact that $s$ is indiscernible over $\emptyset$, so we are done. 
    \end{itemize}
\end{proof}

\begin{remark}\label{remark on Indiscernibility over a set full characterization}
    As before, in the presence of quantifier elimination, we may strengthen weak indiscernibility to indiscernibility.
\end{remark}

\begin{corollary}
    Let $s_1, s_2$ be two indiscernible sequences. If they are mutually indiscernible, then $s_1 \cap H(s_2) = \emptyset$,  $s_2 \cap H(s_1) = \emptyset$. The converse holds in any $D$-set augmented with unary predicates which has quantifier elimination.
\end{corollary}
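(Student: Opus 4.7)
The plan is to derive this corollary as a direct application of Theorem \ref{Indiscernibility over a set full characterization} and Remark \ref{remark on Indiscernibility over a set full characterization}, unpacking ``mutually indiscernible'' as: $s_1$ is indiscernible over the set of elements of $s_2$, and symmetrically.

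For the forward implication, assume $s_1$ and $s_2$ are mutually indiscernible. Then $s_1$ is in particular weakly indiscernible over the set $B$ of elements of $s_2$, since indiscernibility implies weak indiscernibility. The necessity direction of Theorem \ref{Indiscernibility over a set full characterization} then gives $B \cap H(s_1) = \emptyset$, which is precisely $s_2 \cap H(s_1) = \emptyset$. By an identical argument with the roles of $s_1$ and $s_2$ exchanged, $s_1 \cap H(s_2) = \emptyset$. This half requires no quantifier elimination hypothesis, in line with the statement.

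For the converse, work in a $D$-set augmented with unary predicates whose theory has quantifier elimination, and suppose $s_1 \cap H(s_2) = \emptyset$ and $s_2 \cap H(s_1) = \emptyset$. Applying the sufficiency direction of Theorem \ref{Indiscernibility over a set full characterization} with $s = s_1$ and $B$ equal to the set of elements of $s_2$ yields that $s_1$ is weakly indiscernible over $s_2$. Remark \ref{remark on Indiscernibility over a set full characterization} then promotes this to full indiscernibility over $s_2$; the side condition in that remark — that all elements of $s_1$ satisfy the same unary predicates — is automatic from the $\emptyset$-indiscernibility of $s_1$. The symmetric argument shows $s_2$ is indiscernible over $s_1$, so the two sequences are mutually indiscernible.

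I do not anticipate any real obstacle: the corollary is essentially a renaming of Theorem \ref{Indiscernibility over a set full characterization} with the parameter set taken to be the elements of a second indiscernible sequence. The only care required is to invoke the QE-enhanced version of the theorem in the converse direction so that weak indiscernibility can be upgraded to full indiscernibility, and to observe that the unary-predicate condition from Remark \ref{remark on Indiscernibility over a set full characterization} comes for free from the $\emptyset$-indiscernibility of the sequences in question.
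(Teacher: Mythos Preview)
Your proposal is correct and matches the paper's intended approach: the paper states this result as a corollary without proof, implicitly deriving it exactly as you do by applying Theorem~\ref{Indiscernibility over a set full characterization} (and its QE-strengthened form in Remark~\ref{remark on Indiscernibility over a set full characterization}) with $B$ taken to be the underlying set of the other sequence.
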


\begin{lemma}
    Let $s_1, s_2$ be two indiscernible sequences. If $H(s_1) \cap H(s_2) \neq \emptyset$ then $s_1 \cap H(s_2) \neq \emptyset$ or $s_2 \cap H(s_1) \neq \emptyset$.
\end{lemma}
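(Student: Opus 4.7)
The plan is to argue by contrapositive: assume both $s_1 \cap H(s_2) = \emptyset$ and $s_2 \cap H(s_1) = \emptyset$, and show $H(s_1) \cap H(s_2) = \emptyset$. If either $s_i$ is constant, its hull is empty by Definition~\ref{disc_hull} and the conclusion is immediate; otherwise, the petaled and monotonic clauses of that same definition give $s_i \subseteq H(s_i)$. By Theorem~\ref{characterizing indiscernibility in one element} applied in both directions, the hypothesis is equivalent to $s_1$ being weakly indiscernible over $s_2$ and $s_2$ being weakly indiscernible over $s_1$.

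From this mutual weak indiscernibility I would produce, after reindexing the two sequences on a common linear order, a weakly $2$-indiscernible joint sequence $((a^1_i, a^2_i))_i$: any shift of the indices preserving order type can be performed first on the $s_1$-coordinates (preserving truth of any quantifier-free formula by weak indiscernibility of $s_1$ over $s_2$) and then on the $s_2$-coordinates. An appeal to Lemma~\ref{Hulls are disjoint or identical} applied to this joint sequence then gives the dichotomy $H(s_1) \cap H(s_2) = \emptyset$, which is exactly what is wanted, or $H(s_1) = H(s_2)$. In the latter case, $s_1 \subseteq H(s_1) = H(s_2)$ together with $s_1 \neq \emptyset$ contradicts $s_1 \cap H(s_2) = \emptyset$.

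The principal obstacle is verifying that Lemma~\ref{Hulls are disjoint or identical} genuinely applies to our weakly (rather than fully) indiscernible joint sequence. The constant and petaled cases of that lemma's proof use only quantifier-free $D$-relations and therefore carry over verbatim. The monotonic-monotonic case is written using an existential statement, so it must be handled separately; the cleanest way is to fix a specific $x \in H(s_1) \cap H(s_2)$, extend the sub-splitting $\{\{a^1_{i_1}, x\}, \{a^1_{k_1}\}, \{a^1_{\ell_1}\}\}$ coming from the $H_3(s_1)$-witnesses to a node splitting of $\Omega$, and then apply the Sector Indifference Principle (Lemma~\ref{sector indifference principle}) to transfer the $H_3(s_2)$-witnessing relations from $x$ to the element $a^1_{i_1} \in s_1$, forcing $a^1_{i_1} \in s_1 \cap H(s_2)$ and closing the argument directly.
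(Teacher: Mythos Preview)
Your strategy is genuinely different from the paper's. The paper argues directly: it fixes $x\in H(s_1)\cap H(s_2)$, builds two splittings $\mathcal{C}_1,\mathcal{C}_2$ (one from each sequence, adapted to $x$), and then runs a case analysis on whether $\mathcal{C}_1=\mathcal{C}_2$ and on the petaled/monotonic types, using the One Sector Lemma in the unequal case. Your route via the contrapositive, mutual weak indiscernibility, and an appeal to Lemma~\ref{Hulls are disjoint or identical} on a joint sequence is a nice idea, and the shift-coordinates-one-at-a-time argument does produce a weakly indiscernible pair sequence. The petaled subcases of Lemma~\ref{Hulls are disjoint or identical} really do use only quantifier-free $D$-relations, so those carry over as you claim.

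The gap is in your direct handling of the monotonic--monotonic case. You extend $\{\{a^1_{i_1},x\},\{a^1_{k_1}\},\{a^1_{\ell_1}\}\}$ to a node splitting $\mathcal{D}$ and want the Sector Indifference Principle to transfer the $H_3(s_2)$-witnessing relations from $x$ to $a^1_{i_1}$. For that transfer, Lemma~\ref{sector indifference principle} requires that $x$ and $a^1_{i_1}$ lie outside $\mathcal{D}(A)$, where $A$ is the set of $s_2$-witnesses; equivalently, that none of $a^2_{i_2},a^2_{j_2},a^2_{k_2},a^2_{\ell_2}$ lie in the sector $\Sigma\in\mathcal{D}$ containing $x$ and $a^1_{i_1}$. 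Nothing in your setup prevents this: every element of $L(s_1)$ also satisfies $y\,a^1_{i_1}\mid a^1_{k_1}a^1_{\ell_1}$ and hence lands in $\Sigma$, so if the $s_2$-witnesses (or some of them) happen to lie in $L(s_1)$ the transfer is blocked. This is not a cosmetic issue---it is exactly the configuration that forces the paper, in its Case~3, to compare the initial/final-segment sectors $\Sigma^\ell_i,\Sigma^r_i$ of the two sequences and argue that if $s_1\cap H(s_2)=\emptyset$ then elements of $s_1$ on both sides sandwich all of $s_2$, forcing $s_2\subseteq H(s_1)$. Your SIP shortcut skips precisely this two-sided analysis, and without it the argument does not close.
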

\begin{proof}
    Without loss we may assume both sequences are 1-element non-constant sequences over the same index set. Take an $x \in H(s_1) \cap H(s_2)$. We define two splittings $\mathcal{C}_1, \mathcal{C}_2$ as follows:
    \begin{itemize}
        \item If $s_i$ is petaled, define $\mathcal{C}_i$ as the splitting which separates the elements of $s_i$.

        \item If $s_i$ is monotonic, let $a^i_{j'}, a^i_j, a^i_k, a^i_{k'}$ be witnesses that $x \in H_3(s_1)$. That is these are elements of $s^1$ with $a^i_{j'} x| a^i_k a^i_{k'}$ and $a^i_{j'} a^i_j| x a^i_{k'}$. Take $\mathcal{C}_i$ as the splitting which extends $\set{\set{x}, \set{a^i_j}, \set{a^i_k}}$.
    \end{itemize}

    We shall bifurcate into cases:
    \begin{itemize}
        \item Case 1: $\mathcal{C}_1 \neq \mathcal{C}_2$.

        Then by the One Sector Lemma, one sector of each splitting contains all but one sector of the other. Recall that the union of these special sectors is the whole of $\Omega$, so in particular one of them, say a $\Sigma \in \mathcal{C}_1$, contains $x$. Notice that since $x \in H(s_1)$, $\Sigma$ is necessarily contained in $H(s_1)$ by the definition of the hull (in either case, petaled or monotonic). Notice also that at least two sectors of $\mathcal{C}_2$ contain elements of $s_2$, so $\Sigma$ necessarily contains elements of $s_2$.

        \item Case 2: $\mathcal{C}_1 = \mathcal{C}_2 =: \mathcal{C}$ and $s_1, s_2$ are both petaled.

        Then $x$ shares a sector of $\mathcal{C}$ with an $a_i \in s_1$ and a $b_j \in s_2$, and so $a_i \in H(s_2)$ (and $b_j \in H(s_1)$ as well).

        \item Case 3: $\mathcal{C}_1 = \mathcal{C}_2 =: \mathcal{C}$ and $s_1, s_2$ are both monotonic. 
        
        For each $i$, $\mathcal{C}$ has one sector $\Sigma_i^\ell$ which intersects $s_i$ in an initial segment and one sector $\Sigma_i^r$ which intersects $s_i$ in a final segment. Anything outside of these two sectors is automatically in $H_3(s_i)$, as seen by taking as witnesses two pairs from $s_i$, one from $\Sigma_i^\ell$ and one from $\Sigma_i^r$. Thus, if $\set{\Sigma_1^\ell, \Sigma_1^r} \neq \set{\Sigma_2^\ell, \Sigma_2^r}$ we are done, so assume otherwise. Without loss of generality (flipping one order if necessary) we may further assume that $\Sigma_1^\ell = \Sigma_2^\ell =: \Sigma^\ell$ and $\Sigma_1^r = \Sigma_2^r =: \Sigma^r$.

        Now suppose $s_1 \cap H(s_2) = \emptyset$. Take two elements $a_j^1, a_{j'}^1$ in $\Sigma^\ell \cap s_1$. By assumption they are not in $H(s_2)$ and so by Lemma \ref{Characterizing the hull of monotonic sequences} they must be in $L(s_2)$, since they can't be in $R(s_2)$ given that $\Sigma^r$ intersects $s_2$ in a initial segment. Similarly, take two elements $a_k^1, a_{k'}^1$ in $\Sigma^r \cap s_1$. They must be in $R(s_2)$. But then for any $y \in s_2$ we have that $y \in H_3(s_1)$, witnessed by $a_j^1, a_{j'}^1, a_{k}^1, a_{k'}^1$ as a straightforward consequence of Lemma \ref{Characterizing the hull of monotonic sequences}, so that $s_2 \cap H(s_1)$ is all of $s_2$, and in particular is nonempty. We conclude that one of $s_1 \cap H(s_2)$ or $s_2 \cap H(s_1)$ is nonempty and we are done.

        \item Case 4: $\mathcal{C}_1 = \mathcal{C}_2 =: \mathcal{C}$, $s_1$ is petaled and $s_2$ is monotonic.

        Take an element $a_i \in s_1$ which does not share a sector with any elements of $s_2$ (which exists because only two sectors of $\mathcal{C}$ contain elements of $s_2$ whereas infinitely many of them contain elements of $s_1$). Notice that $a_i \in H_3(s_2)$, witnessed by the same elements which witness $x \in H_3(s_2)$.
    \end{itemize}
\end{proof}

\begin{corollary}
\label{disjoint_hulls}
    If two indiscernible sequences are mutually indiscernible, then their hulls are disjoint.
\end{corollary}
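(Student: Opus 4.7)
The plan is to chain together the two results immediately preceding the corollary. Suppose $s_1$ and $s_2$ are mutually indiscernible. Then in particular $s_1$ is indiscernible (hence weakly indiscernible) over $s_2$, and $s_2$ is indiscernible over $s_1$. Applying the corollary stated just before the previous lemma gives
\[
s_1 \cap H(s_2) = \emptyset \quad \text{and} \quad s_2 \cap H(s_1) = \emptyset.
\]
Note that this direction of that corollary does not require quantifier elimination, since it uses only the necessity portion of Theorem~\ref{Indiscernibility over a set full characterization}, which holds in every $D$-set.

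Next I would invoke the contrapositive of the preceding lemma: that lemma shows that whenever $H(s_1) \cap H(s_2) \neq \emptyset$, at least one of $s_1 \cap H(s_2)$ or $s_2 \cap H(s_1)$ is nonempty. Combining this contrapositive with the conclusion of the previous paragraph forces $H(s_1) \cap H(s_2) = \emptyset$, which is exactly what we want.

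The only subtlety worth flagging (rather than a real obstacle) is verifying that the preceding lemma applies uniformly regardless of whether the $s_i$ are $1$-element sequences or sequences of tuples; this is handled by reducing to the $1$-element case in the standard way (since the hull of a tuple-sequence is by definition the union of the hulls of its coordinate sequences, if $H(s_1) \cap H(s_2) \neq \emptyset$ then there are coordinate sequences $s_1^i, s_2^j$ with $H(s_1^i) \cap H(s_2^j) \neq \emptyset$, and mutual indiscernibility of $s_1, s_2$ implies mutual indiscernibility of $s_1^i, s_2^j$). With this reduction in hand the proof is essentially a one-line appeal to the previous two results.
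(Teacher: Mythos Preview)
Your proof is correct and matches the paper's intended approach: the corollary is stated without proof precisely because it follows immediately by combining the preceding corollary (mutual indiscernibility forces $s_1 \cap H(s_2) = s_2 \cap H(s_1) = \emptyset$) with the contrapositive of the preceding lemma. Your extra paragraph on reducing tuple-sequences to coordinate sequences is fine but unnecessary, since the preceding lemma is already stated for arbitrary indiscernible sequences and performs that reduction in its own proof.
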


\begin{remark}
    In the presence of quantifier elimination, we have a partial converse. It is possible to have two sequences $s_1, s_2$ with disjoint hulls but $s_1\cap H(s_2) \neq \emptyset$. However, this is only possible if $s_1$ has a constant component which lies on $H(s_2)$, so the converse result is that in the presence of quantifier elimination, the requirements for mutual indiscernibility are that the hulls are disjoint and that the constant components of each sequence are disjoint from the hull of the other. (Note that the constant components of each sequence may intersect). 
\end{remark}

\begin{corollary}
    Let $T$ be the theory of a $D$-set augmented with unary predicates such that $T$ has quantifier elimination. Then $T$ is distal if and only if no model of $T$ contains an infinite petaled sequence.
\end{corollary}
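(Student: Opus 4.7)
The strategy rests on the characterization (Theorem~\ref{characterizing indiscernibility in one element}, strengthened to genuine indiscernibility under quantifier elimination) that an indiscernible sequence of singletons $s$ is indiscernible over $B$ if and only if $B \cap H(s) = \emptyset$. The goal is to verify the distality condition: given $s = s_1+s_2$ indiscernible over $B$ and $c$ with $s^* := s_1+c+s_2$ indiscernible over $\emptyset$, to deduce $B \cap H(s^*) = \emptyset$. By Lemma~\ref{Indiscernibles in one variable} applied to $s^*$, we obtain a trichotomy (constant, petaled, monotonic) that drives the proof.

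For the \emph{sufficiency} direction, assume no model of $T$ contains an infinite petaled sequence. Since $s_1$ and $s_2$ are unbounded, $s^*$ is infinite, and the hypothesis excludes the petaled case; the constant case is immediate because a constant $s^*$ forces $c$ to equal every element of $s$. In the monotonic case I will prove the stronger statement $H(s^*) \subseteq H(s)$, from which $B \cap H(s^*) \subseteq B \cap H(s) = \emptyset$ follows at once. By Lemma~\ref{Characterizing the hull of monotonic sequences}(1) this reduces to checking $L(s) \subseteq L(s^*)$ and $R(s) \subseteq R(s^*)$. The key observation is that any sector $\Sigma$ with $\Sigma \cap s^*$ a nonempty initial segment of $s^*$ also yields $\Sigma \cap s$ a nonempty initial segment of $s$: writing $\Sigma \cap s^* = \{a \in s^* : a \leq y\}$, either $y \in s$ and $\Sigma \cap s = \{a \in s : a \leq y\}$, or $y = c$ and then $\Sigma \cap s = s_1$, which is still a (necessarily infinite) initial segment of $s = s_1+s_2$. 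Thus the family of sectors defining $L(s^*)$ is contained in that defining $L(s)$, so the intersection $L(s^*) \supseteq L(s)$, and symmetrically $R(s^*) \supseteq R(s)$.

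For the \emph{necessity} direction, suppose some model of $T$ contains an infinite petaled sequence $(a_i)_{i \in \mathbb{Z}}$; let $\mathcal{C}$ be the node splitting with each $a_i$ in its own sector $\Sigma_i$. Take $s_1 := (a_i)_{i<0}$, $s_2 := (a_i)_{i>0}$, $c := a_0$, and $B := \{c\}$. Both $s_1+s_2$ and $s_1+c+s_2$ are petaled (indiscernible over $\emptyset$). By the uniqueness clause of Lemma~\ref{splitting extension lemma}, the separating splitting of $s_1+s_2$ is still $\mathcal{C}$, so $H(s_1+s_2) = \bigcup_{i \neq 0} \Sigma_i$, which does not contain $c \in \Sigma_0$; hence $s_1+s_2$ is indiscernible over $B$. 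On the other hand, $c \in \Sigma_0 \subseteq H(s^*)$, so $B \cap H(s^*) \neq \emptyset$ and $s^*$ is not indiscernible over $B$, witnessing failure of distality.

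The main technical point is the monotonic hull comparison; the subtlety is handling the boundary case $y = c$ and using that $s_1$ has no last element so that no pathological sector $\Sigma$ containing $c$ together with a proper initial segment of $s_1$ can count as defining $L(s^*)$. The remaining steps are bookkeeping using the results of Sections~2 and~5 already established.
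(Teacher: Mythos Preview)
Your approach is essentially the paper's: reduce to the trichotomy on singleton sequences, handle the monotonic case by comparing hulls, and for the converse extract a petaled counterexample with $B=\{c\}$. Your monotonic argument via the frontier inclusions $L(s)\subseteq L(s^*)$, $R(s)\subseteq R(s^*)$ is in fact more explicit than the paper's, which simply observes $c\in H_3(s)$ and asserts $H(s_1+c+s_2)=H(s)$ without spelling out why the hull is unchanged.

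One small correction in the necessity direction: indexing by $\mathbb{Z}$ makes $s_1=(a_i)_{i<0}$ and $s_2=(a_i)_{i>0}$ each have an endpoint ($a_{-1}$ and $a_1$), so they do not satisfy the ``without endpoints'' hypothesis in the distality definition used here. Since a petaled sequence is totally indiscernible, you can reindex by $\mathbb{Q}$ (taking $s_1=(a_q)_{q<0}$, $s_2=(a_q)_{q>0}$, $c=a_0$) and the rest of your argument goes through verbatim; the paper makes exactly this reindexing remark.
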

\begin{proof}
    Let $s = s_1 + s_2$ be an unbounded indiscernible sequence of singletons over a set $B$, and let $c$ be such that $s_1 + c + s_2$ is indiscernible over $\emptyset$.

    If $s$ is constant then the conditions of distality are trivial.

    If $s$ is monotonic, notice that $c \in H(s)$ since for $a_1, a_1' \in s_1$ and $a_2, a_2' \in s_2$, we have $a_1a_1'|c a_2'$ and $a_1c |a_2 a_2'$, hence $H(s_1 + c + s_2) = H(s)$ and so $H(s_1 + c + s_2) \cap B = \emptyset$ and $s_1 + c + s_2$ is indiscernible over $B$.

    Thus, if $T$ admits only constant and monotonic indiscernible sequences, then it is distal. However, if $s$ is petaled, let $a$ be an element of $s$ such that $s$ can be written as $s_1 + c + s_2$ with $s_1, s_2$ unbounded. Such an $a$ can be assumed to exist without loss of generality, since a petaled sequence can always be re-indexed to any linear order of its cardinality. Notice that $s_1 + s_2$ is indiscernible over $\set{c}$ since $H(s_1 + s_2) \not\ni c$ and that $s_1 + c + s_2$ is indiscernible over the empty set, but obviously not over $\set{c}$. Thus a $D$-set with an infinite petaled sequence is not distal.
\end{proof}

Finally, we prove Theorem~\ref{dpmin}.

\begin{proof}
    Suppose $T$ is a theory of colored $D$-sets with quantifier elimination. Given two indiscernible sequences $s_1, s_2$ and some $b$ in a model $\Omega$ of $T$, we have that each $s_i$ is indiscernible over $b$ if and only if $b \not \in H(s_i)$. Since $s_1$ and $s_2$ are mutually indiscernible we also have $H(s_1) \cap H(s_2) = \emptyset$ and so at least one of $s_1, s_2$ is indiscernible over $b$.
\end{proof}

\bibliographystyle{babplain}
\bibliography{ref}

\end{document}